\documentclass{siamart251216}
\usepackage{amssymb}
\usepackage{booktabs,tabularx}
\usepackage{float}
\usepackage{tikz}

\definecolor{cyclefive}{RGB}{0,114,178}
\definecolor{cyclethree}{RGB}{213,94,0}
\definecolor{cyclethird}{RGB}{0,158,115}
\tikzset{
  fivecycle/.style={draw=cyclefive,line cap=round},
  threecycle/.style={draw=cyclethree,line cap=round},
  thirdcycle/.style={draw=cyclethird,line cap=round}
}

\newsiamremark{remark}{Remark}
\newsiamremark{example}{Example}

\newcommand\epi{\bar{\pi}}
\newcommand\eiota{\bar{\iota}}
\newcommand\spi{\eiota {\epi}^{-1}}
\newcommand\norm[1]{{\lvert\lvert#1\rvert\rvert}_3}

\newcommand{\Reflect}{\mathfrak{r}}

\newcommand{\Cycle}{\mathsf{C}}

\DeclareMathOperator{\Supp}{Supp}
\DeclareMathOperator{\Fix}{Fix}

\title{Twisted Bracelets for Sorting by Transpositions:
   the Transposition Diameter of $S_{16}$}
\author{Luiz A. G. Silva\thanks{Departamento de Matemática, Universidade de Brasília, Brasília, Brazil (\email{laugustogarcia@gmail.com}, \email{norai@unb.br}).}
  \and Luis A. B. Kowada\thanks{Instituto de Computação, Universidade Federal Fluminense, Niterói, Brazil (\email{luis@ic.uff.br}).}
  \and Noraí R. Rocco\footnotemark[1]
  \and Maria E. M. T. Walter\thanks{Departamento de Ciência da Computação, Universidade de Brasília, Brasília, Brazil (\email{mariaemilia@unb.br}).}}
\headers{Twisted Bracelets and Transposition Diameter}%
  {Silva, Kowada, Rocco, and Walter}

\ExplSyntaxOn
\NewDocumentCommand{\cycle}{ O{\;} m }
{
  (
    \alec_cycle:nn { #1 } { #2 }
  )
}

\seq_new:N \l_alec_cycle_seq
\cs_new_protected:Npn \alec_cycle:nn #1 #2
{
  \seq_set_split:Nnn \l_alec_cycle_seq { , } { #2 }
  \seq_use:Nn \l_alec_cycle_seq { #1 }
}
\ExplSyntaxOff

\begin{document}
\raggedbottom
\maketitle

\begin{abstract}
  \textsc{Sorting By Transpositions} (SBT) seeks the minimum number of transpositions required to sort a permutation $\pi$ on $n$ symbols into the identity $\iota$. Let $N=n+1$. A cyclic-target pair $(\omega,\beta)$ consists of an even permutation $\omega$ and an $N$-cycle $\beta$ for which $\rho=\omega\beta$ is an $N$-cycle. An SBT instance is the special case $(\spi,\epi)$, where $\epi$ and $\eiota$ encode $\pi$ and $\iota$, and $\spi\epi=\eiota$. For a prescribed fixed-point-free cycle type, fixed-content words encode $\omega$, with colors distinguishing cycles and ranks recording their orientations relative to $\beta$. A word is realizable exactly when $\rho=\omega\beta$ is an $N$-cycle. Permutations of equal-part colors and shifts of rank origins form auxiliary symmetries that, together with word rotation and position reflection coupled to rank inversion, define a twisted dihedral action. Its orbits are twisted bracelets, and its realizable orbits correspond bijectively to extended-toric equivalence classes of cyclic-target pairs, where reflection is adjoined to classical toric equivalence. This correspondence yields exact orbit counts and directly generates one representative per realizable class. The transposition diameter $TD(n)$ is the largest transposition distance in $S_n$. Combining fixed-point contraction and structural reductions with exhaustive verification of the remaining twisted bracelets, we prove $TD(16)=9$, closing a twenty-five-year gap. This result also yields $TD(19)=11$ and, for every $n\equiv1\pmod{3}$ with $n\geq16$, $TD(n)\leq\left\lfloor(2n-2)/3\right\rfloor-1$, improving the previous general upper bound by one for these $n$.
\end{abstract}

\begin{keywords}
sorting by transpositions, transposition diameter, symmetric group,
fixed-content words, twisted bracelets, dihedral group actions
\end{keywords}

\begin{MSCcodes}
05A05, 05A15, 20B30
\end{MSCcodes}

\section{Introduction}

\textsc{Sorting By Transpositions} (SBT) asks for the minimum number of transpositions
required to transform a permutation $\pi\in S_n$ of $\{1,\dots,n\}$ into
the identity permutation $\iota=[1\ 2\ \cdots\ n]$.  Here a
\emph{transposition} is the operation that exchanges two adjacent blocks of
symbols, modeling a large-scale genomic rearrangement.  The
resulting minimum is the \emph{transposition distance} $d_t(\pi)$, and the
\emph{transposition diameter} is
$TD(n):=\max_{\pi\in S_n}d_t(\pi)$.

Bafna and Pevzner~\cite{BafnaPevzner1998} introduced SBT and gave the first approximation
algorithm, with ratio $1.5$, by exploiting properties of the \emph{cycle graph} structure
recalled in Appendix~\ref{appendix-a}.  Their work also established a lower bound for
$d_t(\pi)$ and the upper bound $TD(n)\leq\lfloor3n/4\rfloor$.
Eriksson et al.~\cite{eriksson2001sorting} later improved the latter to
$\lfloor(2n-2)/3\rfloor$ for $n\geq9$. Elias and Hartman~\cite{EliasHartman2006} proposed a
$1.375$-approximation based on a \emph{simplification} that embeds the input
permutation with additional symbols while preserving its lower bound.
The computational complexity of SBT remained unresolved until Bulteau,
Fertin, and Rusu~\cite{bulteau2012sorting} proved that SBT is
$\mathrm{NP}$-hard.

Silva et al.~\cite{silva2022new} showed that Elias and Hartman's
simplification can lose an initial sequence of transpositions present in
the original permutation.  As a consequence, the algorithm could perform
an extra transposition and fail to satisfy the claimed approximation guarantee.
They introduced an algebraic formulation that avoids simplification and
restores the $1.375$ ratio, with running time $O(n^6)$; Alexandrino et
al.~\cite{alexandrino20221} reduced this to $O(n^5)$.  Silva, Kowada, and
Walter~\cite{silva2023barrier} subsequently proved that the classical
lower bound of Bafna and Pevzner~\cite{BafnaPevzner1998} cannot support an approximation ratio below $1.375$.

Eriksson et al.~\cite{eriksson2001sorting} determined $TD(n)$ for
$1\leq n\leq15$.  Exhaustive Cayley graphs for $n\leq10$ and filtered
searches of \emph{toric classes}---equivalence classes of permutations under cyclic rotations---for
$11\leq n\leq13$ gave $TD(11)=6$,
$TD(12)=7$, and five extremal toric classes establishing $TD(13)=8$.  Their
recurrence $TD(n)\leq TD(n-3)+2$ for $n\geq9$ completed the upper bounds
for $n=14,15$; the reverse permutation $[n\ n-1\ \cdots\ 1]$ and an explicit hard-to-sort
permutation attained them, respectively.  Elias and
Hartman~\cite{EliasHartman2006} subsequently proved the lower bound
$TD(n)\geq\lfloor(n+1)/2\rfloor+1$. Korchmaros~\cite{korchmaros2015combinatorial}
established $TD(17)=10$ without an additional computer search by noting that
this lower bound and the upper bound of Eriksson et al. both equal $10$ at
$n=17$. For the intervening value,
Eriksson et al.'s reverse-permutation formula gives
$TD(16)\geq\lceil17/2\rceil=9$, while their recurrence and $TD(13)=8$ give
$TD(16)\leq TD(13)+2=10$. Thus $9\leq TD(16)\leq10$, leaving $n=16$
the smallest unresolved case. Resolving it at the lower value also settles
the next odd case. Indeed, $TD(16)=9$ and the Eriksson
recurrence~\cite{eriksson2001sorting} imply $TD(19)\leq11$, while the
Elias--Hartman lower bound~\cite{EliasHartman2006} gives $TD(19)\geq11$.

To close this gap, we must exclude transposition distance $10$ throughout
$S_{16}$, whose $16!\approx 2.1\times 10^{13}$ permutations make exhaustive
search at the permutation level infeasible. Even after restricting exhaustive
verification to particular families of permutations, generating all their instances before
symmetry reduction would repeatedly construct equivalent cases. We therefore
enumerate directly modulo distance-preserving symmetries. Let $N=n+1$, and let
$A_N$ denote the alternating group on $N$ symbols. In the algebraic setting
used here, toric rotations together with reflection define
\emph{extended-toric equivalence}. We encode permutations $\omega\in A_N$ of a
prescribed fixed-point-free cycle type by \emph{fixed-content words}; each
symbol has a prescribed multiplicity. The cycles of $\omega$ are
\emph{oriented} or \emph{unoriented} according to their cyclic orders relative to a fixed
$N$-cycle $\beta$.  A completed word is \emph{realizable} precisely when
$(\omega,\beta)$ is a \emph{cyclic-target pair}, so that $\rho=\omega\beta$ is also
an $N$-cycle, thus modeling an SBT instance.  Colors identify cycles and
ranks record orientation. Permutations of equal-part colors and shifts of
rank origins form an auxiliary action. Together with word rotation and
position reflection combined with rank inversion, these symmetries define
a twisted dihedral action. Its orbits are \emph{twisted bracelets}.

Hultman~\cite{hultman1999toric} counted ordinary toric classes and
introduced what he called \emph{Pevzner cycles}.  He showed this structure to
be equivalent to cycle graph~\cite{BafnaPevzner1998} and invariant under toric equivalence, but did
not count toric classes by their number of cycles.  Doignon and
Labarre~\cite{doignon2007hultman} subsequently counted permutations by their
number of cycles in the associated cycle graph and, more generally, by the
prescribed lengths of those cycles, without quotienting by toric equivalence.
Thus neither result counts toric classes of prescribed cycle type.
We show that our enumeration, when summed over all annotations of a fixed
cycle type before taking the dihedral quotient, recovers the prescribed-type
counts of Doignon and Labarre.

Our main contribution is this twisted-bracelet framework. Its orbit
correspondence identifies, for each fixed-point-free cycle type, with the orientations of its cycles specified relative to \(\beta\), the realizable twisted bracelets with the
extended-toric classes of cyclic-target pairs. Burnside's lemma therefore gives exact
orbit-counting identities. Auxiliary standardization accounts for the color
and rank-origin choices and enables direct generation of one fixed-content
encoding word for each such class. This enumeration is refined by cycle
lengths and every allowable assignment of cycle orientations; to the best of
our knowledge, such a refinement is new in SBT.

We apply the framework to the $33$
fixed-point-free cycle types in $A_{17}$. Fixed-point contraction and
structural reductions settle most of the resulting cases; for the remainder,
we submit the generated representatives to exhaustive sorting searches. The
computer-assisted part is divided into witness emission and certificate
checking. A GPU-accelerated witness emitter stores every sorting sequence on
which the proof relies in a persistent RocksDB database~\cite{rocksdb} that serves as a
certificate, indexed by the corresponding generated word. A separate CPU
certificate checker independently reconstructs the case routing and the
required representatives, reads the certificate without invoking the search
procedure, and replays every stored sequence exactly. The two programs have
separate implementations of the algebra and enumeration; the certificate
format is their only interface.
Together, these steps establish $TD(16)=9$ and close the twenty-five-year gap.
The Eriksson recurrence~\cite{eriksson2001sorting} then gives $TD(19)=11$.
More generally, for every $n\equiv1\pmod{3}$ with $n\geq16$, it gives
$TD(n)\leq\left\lfloor(2n-2)/3\right\rfloor-1$, improving the Eriksson et al.\
upper bound by one for these $n$.
Source code and
reproduction instructions, including the complete source code for both
computations, are publicly available in a GitHub
repository~\cite{sourcecode}, and the emitted certificate is available for
direct download~\cite{td16certificate}.

The paper is organized as follows.  Section~\ref{sec:pg} fixes the
permutation-group conventions, and Section~\ref{sec:comb-prelim} collects
the combinatorial background on fixed-content words, group actions, and
orbit counting. Section~\ref{sec:preliminaries} gives the algebraic
formulation of SBT.  Section~\ref{toric} formulates toric
equivalence with reflection and extends it to cyclic targets.
Section~\ref{sec:twisted-bracelets} develops the twisted-bracelet
framework using fixed-content words. Section~\ref{sec:td16} presents
the exhaustive proof of $TD(16)=9$, derives $TD(19)=11$, and improves the
upper bound for $n\equiv1\pmod{3}$ as corollaries,
and Section~\ref{sec:conclusion}
concludes the paper.

\section{Permutation groups}\label{sec:pg}

The following conventions and standard facts may be found in abstract
algebra textbooks~\cite{dummit2004abstract,Gallian2009,Herstein}.  The
algebraic representation used later depends only on elementary properties
of permutations, conjugation, and centralizers.

A \emph{group} is a set $G$ equipped with an associative binary operation,
an identity element, and an inverse for every element of $G$.  It is
\emph{finite} when $G$ has finitely many elements.  Let $E$ be a finite
set of $n$ symbols.  A \emph{permutation} of $E$ is a bijection from $E$
to itself, and the set of all such permutations forms the \emph{symmetric
  group} $S_n$ under composition.  Thus, for $\delta,\epsilon\in S_n$, the
product $\delta\epsilon$ is the permutation defined by
$(\delta\epsilon)(x)=\delta(\epsilon(x))$; in particular, the rightmost factor
acts first.  A \emph{subgroup} of $S_n$ is a subset that is itself a group
under composition.  Throughout the paper, we identify $E$ with the
standard set $\{1,2,\dots,n\}$.

An element $x \in E$ is a \emph{fixed point} of $\delta$ if $\delta(x) = x$.
If there exist distinct elements $c_1, \dots, c_\kappa \in E$ such that
$\delta(c_i) = c_{i+1}$ for $1 \le i < \kappa$, $\delta(c_\kappa) = c_1$,
and all other elements are fixed, then $\delta$ is called a
\emph{$\kappa$-cycle}. Its \emph{length} is $\kappa$. In \emph{cycle notation},
such a permutation is written $(c_1\ c_2\dots c_\kappa)$, and any cyclic
rotation represents the same cycle.

Although a $2$-cycle is often called a transposition in the algebraic
literature, this paper uses the term transposition exclusively for the
operation that swaps two adjacent blocks in a permutation representing gene
order.

The \emph{support} of $\delta$, denoted $\Supp(\delta)$, is the set of elements
moved by $\delta$, i.e., $\Supp(\delta) = \{ x \in E\ |\ \delta(x) \ne x \}$. Two
permutations $\delta, \epsilon \in S_n$ are said to be \emph{disjoint} if $\Supp
  (\delta) \cap \Supp(\epsilon) = \emptyset$. In that case, $\delta$ and $\epsilon$
commute, so that $\delta \epsilon = \epsilon \delta$, since each element of $E$ is moved
by at most one of the two.

Every permutation $\delta \in S_n$ can be written as a product of pairwise
disjoint cycles (referred to throughout as the \emph{cycles of} $\delta$), and
this factorization is unique up to rearrangement of its factors. This is called
the \emph{disjoint cycle decomposition} of $\delta$. By convention, $1$-cycles
(i.e., fixed points) are often omitted from
the notation, though they may be included when relevant. The identity permutation,
denoted by $\iota$, fixes every element of $E$. It therefore has empty support and
a disjoint cycle decomposition consisting entirely of $1$-cycles. The \emph{cycle
  type} of a permutation is the multiset of lengths of the cycles in its disjoint
cycle decomposition, including any $1$-cycles. Throughout the paper, we identify
this multiset with the corresponding integer partition of $n$, written in weakly
decreasing order as a parenthesized tuple. Within such a tuple, $k^a$
abbreviates $a$ occurrences of the part $k$.

\begin{example}
  The permutation $(1\ 4\ 2)(3\ 6)(5\ 7) \in S_8$ has cycle type $(3,2^2,1)$.
\end{example}

For $\delta\in S_n$, the \emph{parity} of $\delta$ is the parity of the number of
$2$-cycles in any factorization of $\delta$ into $2$-cycles. This
quantity is well-defined, i.e., all such decompositions involve either
only even or only odd numbers of $2$-cycles. A permutation is \emph{even} (\emph{odd})
if it can be written as a product of an even (odd) number of $2$-cycles.
It is a standard fact that an $n$-cycle is an even (odd)
permutation when $n$ is odd (even).
Moreover, the product of two permutations
with the same parity is always even. The set of even permutations in
$S_n$ forms a subgroup, called the \emph{alternating group}, and is
denoted by $A_n$.  It follows from the disjoint cycle decomposition that a
permutation belongs to $A_n$ if and only if its cycle type contains an even
number of even parts.

Given $\delta, \epsilon \in S_n$, the \emph{conjugate} of $\delta$ by $\epsilon$ is the
permutation $\epsilon \delta \epsilon^{-1}$ denoted $\delta^\epsilon$. Conjugation affects
the cycles of a permutation $\delta$ by renaming each symbol $x$ as $\epsilon(x)$,
i.e., if $\delta$ maps $x \mapsto y$, then the conjugate $\delta^\epsilon = \epsilon \delta \epsilon^{-1}$
maps $\epsilon(x) \mapsto \epsilon(y)$.

\begin{example}
  Let $\delta = (1\ 2\ 3)(4\ 5), \epsilon = (1\ 4\ 6)(2\ 3) \in S_6$. Then, the conjugate of $\delta$ by $\epsilon$ is the permutation $\delta^\epsilon = (\epsilon(1)\ \epsilon(2)\ \epsilon(3))(\epsilon(4)\ \epsilon(5)) = (4\ 3\ 2)(6\ 5)$.
\end{example}

In particular, conjugation preserves cycle type, and two permutations in
$S_n$ are conjugate if and only if they have the same cycle type.

The \emph{centralizer} of a permutation $\delta \in S_n$ is the set $C_{S_n}(\delta) = \{ \gamma \in S_n\ |\ \gamma \delta = \delta \gamma \}$ of all
permutations that commute with $\delta$. Notably, if $\delta$ is an $n$-cycle,
then $C_{S_n}(\delta)$ is the \emph{cyclic group} generated by $\delta$, i.e.,
$\langle \delta \rangle=\{\iota,\delta,\delta^2,\dots,\delta^{n-1}\}$, where
powers are taken under composition and $\delta^0 = \iota$ is the identity.

\section{Combinatorial background}\label{sec:comb-prelim}

We review the group actions, fixed-content words, and orbit-counting tools
used in Section~\ref{sec:twisted-bracelets}. Standard
references include Stanley~\cite{Stanley2012}, Keller and
Trotter~\cite{KellerTrotter2017}, and Kerber~\cite{Kerber1999}.

For a finite set $Z$, write $|Z|$ for its number of elements. Let $G$ be a
finite group with identity element $e$, and let $X$ be a
finite set. An \emph{action} of $G$ on $X$ is a map $G \times X \to X$,
$(g, x) \mapsto g \cdot x$, such that $e \cdot x = x$ and
$g \cdot (h \cdot x) = (gh) \cdot x$ for every $g, h \in G$ and every
$x \in X$. The \emph{orbit} of $x \in X$ is the set
$G \cdot x = \{ g \cdot x \mid g \in G \}$. The \emph{quotient} of $X$
by $G$ is the set of its orbits, $X/G = \{ G \cdot x \mid x \in X \}$. The
\emph{fixed set} of $g \in G$ is
$\Fix_X(g) = \{ x \in X \mid g \cdot x = x \}$, i.e., the elements of
$X$ left invariant by $g$.

An action is \emph{free} when $\Fix_X(g)=\emptyset$ for every $g\ne e$.
If $G$ acts on both $X$ and $Y$, a map $f:X\to Y$ is
\emph{$G$-equivariant} when $f(g\cdot x)=g\cdot f(x)$ for every $g\in G$ and
$x\in X$. If $H$ and $K$ are groups of permutations of the same set $X$, we
say that $K$ \emph{normalizes} $H$ when $khk^{-1}\in H$ for every $k\in K$
and $h\in H$.

Fix a finite set $\mathcal{C}$, called an \emph{alphabet}, and an integer
$N \ge 3$. A
\emph{word} of length $N$ over $\mathcal{C}$ is a sequence
$\mathbf w = w_0 w_1 \dots w_{N-1}$ with $w_j \in \mathcal{C}$ for each $j$.
Given a prescribed nonnegative multiplicity for each letter, with the
multiplicities summing to $N$, a word has that \emph{content} when every
letter occurs with its prescribed multiplicity. Such words are
\emph{fixed-content words}. Their positions are
indexed by the additive cyclic group
$\mathbb{Z}_N=\{0,1,\dots,N-1\}$, whose operation is addition modulo $N$;
all subscripts below are understood modulo $N$. For
$\nu\in\mathbb Z_N$, let $r_\nu$ act on words by the position rotation
$(r_\nu\cdot\mathbf w)_j=w_{j+\nu}$.
For any set $Y$, let $\operatorname{id}_Y$ denote the identity map on $Y$;
we omit the subscript when the domain is clear.
The rotations $r_\nu$, for $\nu\in\mathbb Z_N$, form a cyclic group of order
$N$.  We use the presentation
$D_N=\langle r_1,s:r_1^N=s^2=\operatorname{id},sr_1s=r_1^{-1}\rangle$
for the \emph{dihedral group} of order $2N$, where $r_\nu=r_1^\nu$.  The
action of the involution $s$ is defined in
Section~\ref{sec:twisted-bracelets}; it reverses positions and applies an
involution to the ranks attached to symbols of oriented colors.

Fix a total order on the alphabet and extend it lexicographically to words. A
\emph{necklace} is an orbit of fixed-content words under position
rotation~\cite{Sawada2003}. A \emph{bracelet} is an orbit under position
rotation and ordinary reversal~\cite{Karim2013}. Each orbit has a unique least
word, called its \emph{representative}.

The following orbit-counting identity is also used in
Section~\ref{sec:twisted-bracelets}. It is classical; see,
for example, Keller and Trotter~\cite{KellerTrotter2017}.

\begin{lemma}[Burnside lemma]\label{lem:burnside}
  Let $G$ be a finite group acting on a finite set $X$. Then the number of
  orbits is $|X / G| = \frac{1}{|G|} \sum_{g \in G} |\Fix_X(g)|$.
\end{lemma}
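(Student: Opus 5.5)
The plan is to double count the set of incidence pairs
\[
P=\{(g,x)\in G\times X \mid g\cdot x=x\}.
\]
First count $P$ by its first coordinate. For each $g\in G$ the pairs with that first coordinate are exactly $\{g\}\times\Fix_X(g)$, so $|P|=\sum_{g\in G}|\Fix_X(g)|$. Next count $P$ by its second coordinate. For each $x\in X$ the relevant set is the stabilizer $G_x=\{g\in G\mid g\cdot x=x\}$, so $|P|=\sum_{x\in X}|G_x|$.

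The key step, and the only one requiring an argument, is the orbit--stabilizer relation $|G\cdot x|\,|G_x|=|G|$. Using the action axioms ($e\cdot x=x$ and compatibility with products), $G_x$ is a subgroup of $G$. The map $gG_x\mapsto g\cdot x$ from left cosets of $G_x$ onto $G\cdot x$ is well defined and injective, because $g\cdot x=h\cdot x$ holds if and only if $h^{-1}g\in G_x$. It is surjective by the definition of the orbit. Lagrange's theorem then gives $|G\cdot x|=|G|/|G_x|$.

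With this in hand, the second count becomes
\[
\sum_{x\in X}|G_x|=|G|\sum_{x\in X}\frac{1}{|G\cdot x|}.
\]
Orbits partition $X$, so group the terms by orbit. An orbit $O$ contributes $|O|$ terms, each equal to $1/|O|$, so its total contribution is $1$. The sum therefore equals $|X/G|$. Equating the two counts and dividing by $|G|$ yields the stated formula.

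None of these steps is a real obstacle. The only care needed is to verify that the coset map is well defined, which uses exactly the two action axioms recalled above.
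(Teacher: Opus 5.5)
Your proof is correct: double counting the incidence set $P$, first by group elements and then by points, and converting $\sum_{x\in X}\lvert G_x\rvert$ into $\lvert G\rvert\,\lvert X/G\rvert$ via orbit--stabilizer, is the standard argument. The paper gives no proof of this lemma and simply cites it as classical (Keller and Trotter), so your argument supplies the textbook proof that the citation stands in for.
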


\section{Preliminaries}\label{sec:preliminaries}

In genome rearrangement studies, the order of $n$ distinct genes along a linear
chromosome is commonly modeled by a permutation. Given genes labeled $1$ through
$n$, a chromosome can be represented by a sequence $\pi = [\pi_1\; \pi_2 \dots \pi_n]$, where each $\pi_i$ denotes the gene at position $i$.

Following Mira et al.~\cite{Mira2008}, let $E_N:=\{0,1,\dots,n\}$ and
$N:=n+1$, and set $\pi_0:=0$.  Thus $|E_N|=N$, and the linear permutation
$\pi$ is represented by the anchored $N$-cycle $\epi=(\pi_0\ \pi_1\ \dots\ \pi_n)
  =(0\ \pi_1\ \dots\ \pi_n)\in S_N$.
The anchor gives each linear permutation a unique cyclic writing beginning
with $0$.  The sorted permutation is represented by
$\eiota=(0\ 1\ \dots\ n)$, which is distinct from the identity
permutation $\iota=(0)(1)\cdots(n)$.  Henceforth SBT statements use the
augmented domain $E_N$, whereas the
general group-theoretic statements above use an arbitrary $n$-element set.

A $3$-cycle $\tau=(\pi_i\;\pi_j\;\pi_k)$ is said to be
\emph{applicable} to $\epi$ if the symbols $\pi_i$, $\pi_j$, and $\pi_k$
appear in $\epi$ in the same cyclic order as in $\tau$, that is,
$\epi=(\pi_i\;\dots\;\pi_j\;\dots\;\pi_k\;\dots)$.
The application of $\tau$ to $\epi$ means left multiplication of $\epi$ by
$\tau$.  Thus, and only in this case, the product $\tau\epi$ is an $N$-cycle
such that the symbols between $\pi_i$ and $\pi_{j-1}$, including $\pi_i$ but
not $\pi_j$, are cut and then pasted between $\pi_{k-1}$ and $\pi_k$, thereby
simulating a transposition:
\begin{equation*}
  \begin{split}
    \tau\epi
      &=(\pi_i\;\pi_j\;\pi_k)
        (\pi_0\;\pi_1\;\dots\;\pi_{i-1}\;\pi_i\;\dots\;\pi_{j-1}\;
         \pi_j\;\dots\;\pi_{k-1}\;\pi_k\;\dots\;\pi_n)\\
      &=(\pi_0\;\pi_1\;\dots\;\pi_{i-1}\;\pi_j\;\dots\;\pi_{k-1}\;
         \pi_i\;\dots\;\pi_{j-1}\;\pi_k\;\dots\;\pi_n).
  \end{split}
\end{equation*}

\begin{lemma}[Applicability criterion]\label{lem:applicability}
  Let $\beta$ be an $N$-cycle and $\tau$ a $3$-cycle.  The product
  $\tau\beta$ is an $N$-cycle if and only if $\tau$ is applicable to
  $\beta$; otherwise $\tau\beta$ has three cycles.
\end{lemma}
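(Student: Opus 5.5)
Write $\tau=(a\ b\ c)$ with $a,b,c$ distinct, and write the $N$-cycle $\beta$ as a cyclic sequence. Up to rotating the cycle notations of $\tau$ and $\beta$, the three symbols either occur in $\beta$ in the cyclic order $a,\dots,b,\dots,c,\dots$, or in the reverse order $a,\dots,c,\dots,b,\dots$. In the first case $\tau$ is applicable to $\beta$ by definition. In the second case it is not, because a cyclic order of three symbols is either $(a,b,c)$ or $(a,c,b)$ and these are mutually exclusive. So it suffices to compute the cycle structure of $\tau\beta$ in each case.

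I will split $\beta$ into three segments by the positions of $a,b,c$. In the applicable case write
\[
\beta=(a\ X\ b\ Y\ c\ Z),
\]
where $X,Y,Z$ are possibly empty sequences and $a X$ denotes the block starting at $a$. Since $\tau$ acts only on $a,b,c$, the product $\tau\beta$ agrees with $\beta$ except at the three predecessors of $a$, $b$ and $c$ in $\beta$. Concretely, $(\tau\beta)(x)=\tau(\beta(x))$. The last symbol of $Z$ (or $c$ itself if $Z$ is empty) was sent to $a$ and is now sent to $b$. The predecessor of $b$ is now sent to $c$, and the predecessor of $c$ is now sent to $a$.

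Tracing orbits then gives the single cycle $(a\ X\ c\ Z\ b\ Y)$, which is exactly the displayed formula preceding the lemma with $\pi_i=a$, $\pi_j=b$, $\pi_k=c$. The blocks starting at $a$ and at $b$ are exchanged. The one point needing care is that empty blocks are handled correctly: when, say, $X$ is empty, the predecessor of $b$ is $a$ itself. I would verify this directly from the definition of the product rather than from the block picture.

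In the non-applicable case write
\[
\beta=(a\ X\ c\ Y\ b\ Z).
\]
The same three redirections now close up separately, giving the three cycles
\[
(a\ X),\qquad (c\ Y),\qquad (b\ Z).
\]
Each of these is a nontrivial cycle or a fixed point when its block is empty, so $\tau\beta$ has exactly three cycles in its disjoint cycle decomposition, counting $1$-cycles. This is what the statement's phrase ``has three cycles'' means.

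As a parity sanity check, $\tau$ is even, so $\tau\beta$ has the same parity as $\beta$. Since $N$ and $N-2$ have the same parity, this is consistent with the number of cycles changing by $0$ or $\pm2$.

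The main obstacle is purely bookkeeping. I must fix the composition convention (rightmost first) and check that the three predecessor redirections are computed correctly, including the degenerate empty-block cases. After that, both cases follow by direct orbit tracing.
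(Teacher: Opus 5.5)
Your proposal is correct and follows the paper's proof. Both arguments write $\beta$ with $a$, $b$, $c$ separated by three possibly empty blocks and multiply directly, getting the single $N$-cycle $(a\ X\ c\ Z\ b\ Y)$ in the applicable case and the three cycles $(a\ X)(c\ Y)(b\ Z)$ otherwise; the only cosmetic difference is that the paper fixes $\beta$ and compares $(a\ b\ c)\beta$ with $(a\ c\ b)\beta$, whereas you fix $\tau$ and vary the cyclic order in $\beta$, which is equivalent.
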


\begin{proof}
  Write $\beta=(a\ A\ b\ B\ c\ C)$, where $A$, $B$, and $C$ are possibly empty
  strings.  Direct multiplication
  gives $(a\ b\ c)\beta=(a\ A\ c\ C\ b\ B)$ and $(a\ c\ b)\beta=(a\ A)(b\ B)(c\ C)$.
  These are the two possible cyclic orders of a $3$-cycle on
  $\{a,b,c\}$, proving the claim.
\end{proof}

\begin{example}
  Let $\epi = (0\ 8\ 6\ 5\ 7\ 4\ 3\ 2\ 1)$. The $3$-cycle $\tau = (5\ 7\ 6)$ is applicable since $5$, $7$, and $6$ occur in this cyclic order in $\epi$, yielding $\tau \epi = (0\ 8\ 5\ 6\ 7\ 4\ 3\ 2\ 1)$, an $N$-cycle. In contrast, $\tau^{-1} = (5\ 6\ 7)$ is not applicable, as $\tau^{-1} \epi = (0\ 8\ 7\ 4\ 3\ 2\ 1)(5)(6)$.
\end{example}

Given an $N$-cycle $\epi$, the \textsc{Sorting by Transpositions} (SBT)
problem consists of finding the minimum number $t$, denoted $d_t(\epi)$,
of transpositions represented as applicable $3$-cycles required to transform
$\epi$ into the target $N$-cycle $\eiota$.  This algebraic distance equals
the transposition distance of the original linear
permutation, and we therefore set $d_t(\pi):=d_t(\epi)$.  Thus
\begin{align}\label{eq:sorting-identity}
  \tau_t \dots \tau_1 \epi & = \eiota,
\end{align}
where each intermediate product $\tau_i \dots \tau_1 \epi$, for all $1 \leq i \leq t$, remains an $N$-cycle. Multiplying both sides of Equation~\eqref{eq:sorting-identity} on the right by $\epi^{-1}$ gives
\begin{align}
  \tau_t \dots \tau_1 & = \eiota \epi^{-1}.\label{eq:sorting-factorization}
\end{align}
Hence the product of the $3$-cycles that
sorts $\epi$ equals $\spi$, which is an even permutation since $\epi$ and
$\eiota$ have the same parity.

For $x\in\{0,1,\dots,n\}$, with addition understood modulo
$N$, an occurrence of $x$ immediately followed by $x+1$ in the
cyclic order of $\epi$ is called an
\emph{adjacency}.  Contracting such an
adjacency means regarding its two consecutive symbols as a single
symbol and renaming the symbols of the resulting permutation.

\begin{proposition}\label{prop:contraction-preserves-distance}
  Let $\eta$ be obtained from $\epi$ by contracting one
  adjacency.  Then $d_t(\eta)=d_t(\epi)$.
\end{proposition}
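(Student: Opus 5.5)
We prove the two inequalities $d_t(\eta)\le d_t(\epi)$ and $d_t(\epi)\le d_t(\eta)$ separately. Throughout we work with the cyclic representation of Section~\ref{sec:preliminaries}. Let the contracted adjacency be $x\,(x+1)$ in $\epi$. Write $\eta$ on $E_{N-1}$ by merging $x,x+1$ into one symbol $x$ and renaming every $y>x+1$ as $y-1$. Under this renaming, contracting the corresponding adjacency $x\,(x+1)$ of $\eiota$ yields the target $(0\ 1\ \cdots\ n-1)$ on $E_{N-1}$. The case $x=n$ (the wrap-around adjacency $n\,0$) is handled the same way by merging $n$ into $0$; alternatively, we reduce it to the other case by the rotation symmetry $y\mapsto y+c$, conjugating both $\epi$ and $\eiota$ by $\eiota^c$.

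\textbf{The easy direction, $d_t(\eta)\le d_t(\epi)$.} Take an optimal sequence $\tau_1,\dots,\tau_t$ sorting $\epi$. Each applicable $3$-cycle cuts the cyclic sequence at three breakpoints, that is, between three consecutive pairs, and reorders the three segments. Now delete the symbol $x+1$ from every intermediate cycle $\tau_i\cdots\tau_1\epi$. Each intermediate permutation projects to a cyclic sequence on $N-1$ symbols. A transposition projects to a transposition on the contracted sequence, or to the identity if one of its cut points lies at a position made meaningless by the deletion; this occurs, for example, when a segment becomes empty. In every case the number of nontrivial projected moves is at most $t$. The final cycle $\eiota$ projects to the contracted identity. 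Hence the projected moves sort $\eta$ in at most $t$ transpositions, after renaming. The step to verify carefully is that the projection of a block swap is again a block swap or trivial. This is clear in the linear-sequence viewpoint: removing one element from a sequence and applying the same block exchange commutes with deletion.

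\textbf{The harder direction, $d_t(\epi)\le d_t(\eta)$.} Given a sorting sequence for $\eta$, expand the merged symbol $x$ back into the pair $x\,(x+1)$ throughout. Every intermediate cycle then contains $x\,(x+1)$ as a consecutive block. A transposition of the contracted sequence cuts only between symbols. Since $x$ and $x+1$ are never separated, it lifts to a transposition of the expanded sequence: the same three cut points, now between symbols of length $N$. The lifted sequence transforms $\epi$ into the expansion of the contracted identity, which is exactly $\eiota$. Applicability is preserved because the cyclic order of the three cut symbols is unchanged. Lemma~\ref{lem:applicability} can be used to phrase this algebraically: the lift of $\tau=(a\ b\ c)$ is the $3$-cycle on the first symbols of the corresponding blocks.

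I expect the main obstacle to be bookkeeping in the first direction. One must state precisely how a $3$-cycle $(\pi_i\ \pi_j\ \pi_k)$ on the $N$-element cycle induces a $3$-cycle or the identity on the contracted cycle, especially when one of $\pi_i,\pi_j,\pi_k$ equals $x+1$. In that case I would replace $x+1$ by its successor in the current cycle, which represents the same cut point after deletion, and check that the result is either applicable or collapses to a trivial move because two cut points coincide. With both inequalities in hand, equality follows.
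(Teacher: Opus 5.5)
Your proposal is correct and follows essentially the same route as the paper. Both proofs lift a sorting sequence of $\eta$ by expanding the merged symbol into an indivisible block, and project a sorting sequence of the original instance by deleting $x+1$ from every intermediate cycle, discarding moves that become trivial when a block empties. Your extra bookkeeping fills in details the paper leaves implicit: replacing a cut symbol $x+1$ by its current successor, and treating the wrap-around adjacency $n,0$ by merging into $0$ or by a toric rotation.
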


\begin{proof}
  Any sorting sequence for $\eta$ lifts to one for $\epi$ by expanding the
  contracted symbol to the original adjacency and treating it as an
  indivisible block.  Hence $d_t(\epi)\leq d_t(\eta)$.  Conversely, let the
  contracted adjacency be $x,x+1$.  Delete $x+1$ from every intermediate
  cycle in a sorting sequence for $\epi$ and apply the contraction's renaming.
  Each move induces either a transposition of the resulting shorter cycle
  or the identity, if an exchanged block becomes empty. Removing the identity
  steps yields a sorting sequence for $\eta$, so $d_t(\eta)\leq d_t(\epi)$.
\end{proof}

Repeated application of Proposition~\ref{prop:contraction-preserves-distance}
shows that contracting $r$ adjacencies reduces an instance in $S_N$ to
one in $S_{N-r}$ without changing its transposition distance.

\begin{proposition}[Adjacencies and fixed points]
  \label{prop:adjacency-fixed-point}
  For $x\in E_N$, the adjacency $x,x+1$ occurs in $\epi$ if and only if
  $x+1$ is a fixed point of $\spi$.
\end{proposition}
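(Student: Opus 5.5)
The argument is a direct evaluation of the composite $\spi=\eiota\epi^{-1}$ at a single symbol. By the convention of Section~\ref{sec:pg}, the rightmost factor acts first. For any $y\in E_N$ we therefore have $\spi(y)=\eiota(\epi^{-1}(y))$. Here $\epi^{-1}(y)$ is the symbol immediately preceding $y$ in the cyclic order of $\epi$, and $\eiota(z)=z+1$ with addition modulo $N$. Hence $\spi(y)=y$ holds exactly when the predecessor of $y$ in $\epi$ equals $y-1 \pmod N$.

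The key steps, in order:
\begin{enumerate}
\item Fix $x\in E_N$ and set $y=x+1$ modulo $N$.
\item For the forward direction, suppose the adjacency $x,x+1$ occurs in $\epi$. Then $\epi(x)=x+1$, so $\epi^{-1}(x+1)=x$. It follows that $\spi(x+1)=\eiota(x)=x+1$, so $x+1$ is a fixed point of $\spi$.
\item For the converse, suppose $\spi(x+1)=x+1$. Then $\eiota(\epi^{-1}(x+1))=x+1$. Since $\eiota$ is a bijection with $\eiota(x)=x+1$, this forces $\epi^{-1}(x+1)=x$, that is, $\epi(x)=x+1$. This is precisely the statement that $x$ is immediately followed by $x+1$ in the cyclic order of $\epi$.
\end{enumerate}

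The only point requiring care is the wraparound case $x=n$, where $x+1=0$. This is consistent with the definition of adjacency, since addition there is also modulo $N$. In this case the adjacency $n,0$ corresponds to $\pi_n=n$ in the linear permutation, and the same computation applies verbatim with $\eiota(n)=0$.

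There is no genuine obstacle here. The main thing to get right is the composition order, so that one applies $\epi^{-1}$ before $\eiota$, together with the identification of $\epi(x)$ with the cyclic successor of $x$ in $\epi$.
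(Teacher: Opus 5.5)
Your proof is correct and follows the same route as the paper: evaluate $\spi=\eiota\epi^{-1}$ at $x+1$, use that $\eiota$ is the shift $z\mapsto z+1 \pmod N$ to reduce the fixed-point condition to $\epi^{-1}(x+1)=x$, and read this as $\epi(x)=x+1$. Your check of the wraparound case $x=n$ (the adjacency $n,0$, i.e.\ $\pi_n=n$) is a harmless extra detail that the paper leaves implicit in its convention of addition modulo $N$.
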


\begin{proof}
  With addition modulo $N$, we have $\spi(x+1)=x+1$ if and only if
  $\epi^{-1}(x+1)=x$, which holds if and only if $\epi(x)=x+1$.
\end{proof}

Mira and Meidanis~\cite{mira2005algebraic} introduced the \emph{$3$-norm} on even
permutations. For $\alpha \in A_n$, $\norm{\alpha}$ is the minimum $\ell \ge 0$
such that $\gamma_{\ell}\dots\gamma_2\gamma_1=\alpha$, where each $\gamma_i$, for $1 \le i \le \ell$, is a $3$-cycle. Let $c^\circ_{\mathrm{odd}}(\alpha)$ denote the
number of odd-length cycles of $\alpha$, including $1$-cycles. They proved the
following lemma.

\begin{lemma}[Mira and Meidanis~\cite{mira2005algebraic}]\label{lem:norm}
  For every $\alpha \in A_n$, $\norm{\alpha} = \frac{n-c^\circ_{\mathrm{odd}}(\alpha)}{2}.$
\end{lemma}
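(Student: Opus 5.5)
We prove the two inequalities separately, writing $\alpha\in A_n$ as a product of disjoint cycles of lengths $\ell_1,\dots,\ell_c$, with fixed points included. Thus $\sum_i \ell_i=n$, and $c^\circ_{\mathrm{odd}}(\alpha)$ counts the odd $\ell_i$. Since $\alpha$ is even, the number of even-length cycles is even. Write $c_{\mathrm{odd}}$ for $c^\circ_{\mathrm{odd}}(\alpha)$ and set $f(\alpha):=(n-c_{\mathrm{odd}})/2$. This quantity is an integer, because $n-c_{\mathrm{odd}}\equiv\sum_{\ell_i\text{ even}}\ell_i\equiv 0\pmod 2$.

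\emph{Upper bound.} We produce an explicit factorization into $f(\alpha)$ three-cycles, treating each odd cycle alone and the even cycles in pairs.

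An odd cycle $(c_1\ c_2\ \dots\ c_{2m+1})$ factors into $m=(\ell-1)/2$ three-cycles:
\[
(c_1\ c_2\ \dots\ c_{2m+1})=(c_1\ c_2\ c_3)(c_3\ c_4\ c_5)\cdots(c_{2m-1}\ c_{2m}\ c_{2m+1}),
\]
with the order of the factors chosen according to the right-to-left convention. This is a direct check, essentially the computation already used in Lemma~\ref{lem:applicability}.

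Two even cycles of lengths $2p$ and $2q$ are handled together. Their product is first merged by a single $3$-cycle that joins them into an odd cycle of length $2p+2q-1$. For example, $(a\ b\ x)$ applied to $(a\ \dots\ b)(x\ \dots)$ with $b$ fixed yields one cycle on the union minus one point. That odd cycle then costs $p+q-1$ further factors, so the pair costs $p+q=(2p+2q)/2$ in total.

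Summing the costs, each odd cycle contributes $(\ell-1)/2$ and each even cycle contributes $\ell/2$. The total is $(n-c_{\mathrm{odd}})/2$, so $\norm{\alpha}\le f(\alpha)$.

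\emph{Lower bound.} The plan is to show that multiplying by any $3$-cycle $\gamma$ changes $c_{\mathrm{odd}}$ by at most $2$, i.e.\ $|c_{\mathrm{odd}}(\gamma\alpha)-c_{\mathrm{odd}}(\alpha)|\le 2$. Granting this, induction from $c_{\mathrm{odd}}(\iota)=n$ shows that any product of $\ell$ three-cycles has $c_{\mathrm{odd}}\ge n-2\ell$, which gives $\norm{\alpha}\ge f(\alpha)$.

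To prove the claim, write $\gamma=(a\ b\ c)$ and split according to how $a,b,c$ are distributed among the cycles of $\alpha$:
\begin{itemize}
\item all three in one cycle;
\item two in one cycle and one in another;
\item all three in distinct cycles.
\end{itemize}
The first case is exactly Lemma~\ref{lem:applicability} applied inside that cycle: the cycle either stays one cycle or splits into three, with lengths summing to the original length. In the other cases, left multiplication cuts and rejoins arcs in the manner of that lemma. In the three-cycle case the result is either a merge into one cycle or a permutation of three cycles with exchanged segments. The mixed case splits one cycle into two and merges one of the pieces with the other cycle.

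In every case, at most three cycles are replaced by at most three cycles with the same total length. A parity count finishes the argument. If one cycle becomes three with total length $L$, the number of odd parts among the three has the parity of $L$, so it changes by $0$ or $2$. If three cycles become one, the change is at most $2$ by the same argument. If the number of cycles is preserved (the $2\to2$ or $3\to3$ rearrangements), at most three odd cycles are replaced by at most three, with total parity preserved. The change could then be $\pm 3$ only if three odd cycles became three even cycles, which is impossible by parity, since $3\cdot$odd is odd.

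The main obstacle is the careful case analysis of the mixed cases, where cycles are rearranged rather than simply split or merged. The cleanest route is to reduce the claim to the general fact that multiplying by a $3$-cycle writes it as two transpositions. Each transposition changes the number of cycles by exactly $\pm1$, splitting one cycle or merging two. A single transposition changes $c_{\mathrm{odd}}$ by at most $2$, but we need the composite change to be at most $2$ rather than $4$. For this we use that $c_{\mathrm{odd}}\equiv n\pmod 2$ for even permutations, together with the observation that after any split by a transposition, exactly one of the two pieces is odd when the original length is odd. This reduces the composite to a short check of the possible split/merge combinations.
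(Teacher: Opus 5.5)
Your proposal is correct. The paper gives no proof of this lemma and simply quotes it from Mira and Meidanis, so there is no in-paper argument to compare against. Your route is the standard one:
\begin{itemize}
\item for $\norm{\alpha}\le(n-c^\circ_{\mathrm{odd}}(\alpha))/2$, an explicit factorization using $(\ell-1)/2$ three-cycles per odd cycle and $p+q$ per pair of even cycles of lengths $2p,2q$;
\item for the reverse inequality, the per-factor estimate $\lvert c^\circ_{\mathrm{odd}}(\gamma\alpha)-c^\circ_{\mathrm{odd}}(\alpha)\rvert\le 2$.
\end{itemize}
You correctly prove that estimate for \emph{arbitrary} $3$-cycles $\gamma$. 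This matters, because a minimal factorization need not consist of applicable moves, so the paper's move-type proposition alone would not suffice.

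Two small corrections. First, when $a,b,c$ lie in three distinct cycles of $\alpha$, left multiplication by $(a\ b\ c)$ always merges them into a single cycle. The predecessors of $a$, $b$, $c$ are sent to $b$, $c$, $a$, respectively, so the ``three cycles with exchanged segments'' alternative never occurs. This is harmless, since your parity count covers that case anyway. Second, the closing paragraph about writing $\gamma$ as two transpositions is unnecessary.

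The argument is already complete, and needs no case analysis, once you note the following. The permutation $\gamma\alpha$ agrees with $\alpha$ except at the three points $\alpha^{-1}(a),\alpha^{-1}(b),\alpha^{-1}(c)$. Hence every cycle of $\gamma\alpha$ avoiding these points is an untouched cycle of $\alpha$. On the affected support there are therefore at most three cycles before and after, with the same total length. The two odd-cycle counts lie in $\{0,1,2,3\}$ and have the same parity, so they differ by at most $2$.
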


Every sorting sequence gives, by
Equation~\eqref{eq:sorting-factorization}, a factorization of $\spi$ into
$d_t(\epi)$ $3$-cycles.  Lemma~\ref{lem:norm} therefore yields the
following lower bound.

\begin{lemma}[Mira and Meidanis~\cite{mira2005algebraic}]\label{lem:lb-norm}
  For every $N$-cycle $\epi \in S_N$,
  $d_t(\epi) \geq \norm{\eiota\epi^{-1}}
  = \frac{N-c^\circ_{\mathrm{odd}}(\eiota\epi^{-1})}{2}$.
\end{lemma}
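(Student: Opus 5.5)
The lower bound follows by combining Equation~\eqref{eq:sorting-factorization} with Lemma~\ref{lem:norm}; the only thing to check is that the sorting product is an admissible $3$-cycle factorization of an even permutation.

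First, I verify that $\spi\in A_N$, so that $\norm{\spi}$ is defined. Both $\eiota$ and $\epi$ are $N$-cycles and therefore have the same parity. Hence $\eiota\epi^{-1}$ is even, since $\epi^{-1}$ has the parity of $\epi$ and the product of two permutations of equal parity is even.

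Second, I take an optimal sorting sequence $\tau_1,\dots,\tau_t$ with $t=d_t(\epi)$. By Equation~\eqref{eq:sorting-factorization}, $\tau_t\cdots\tau_1=\spi$, with each $\tau_i$ a $3$-cycle. This is a factorization of $\spi$ into $t$ $3$-cycles. Since $\norm{\spi}$ is by definition the minimum length of such a factorization, without any applicability requirement, we get $\norm{\spi}\le t=d_t(\epi)$. The applicability constraints only shrink the set of admissible factorizations, so they can only make the minimum larger, never smaller.

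Third, I apply Lemma~\ref{lem:norm} in $S_N$, with $N$ playing the role of $n$. This gives $\norm{\spi}=(N-c^\circ_{\mathrm{odd}}(\spi))/2$, where fixed points of $\spi$ are counted among the odd cycles. Chaining the two relations yields the stated bound.

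There is no real obstacle here. The only points needing care are that the norm is taken in $S_N$ on the augmented domain $E_N$ rather than in $S_n$, and that $1$-cycles are included in $c^\circ_{\mathrm{odd}}$, matching the convention of Lemma~\ref{lem:norm}.
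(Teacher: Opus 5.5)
Your proposal is correct and follows the paper's own argument. The sorting identity \eqref{eq:sorting-factorization} writes $\spi$ as a product of $d_t(\epi)$ $3$-cycles, so $\norm{\spi}\le d_t(\epi)$, and Lemma~\ref{lem:norm} applied on the $N$ symbols of $E_N$ gives the closed form; your parity check repeats the remark the paper makes right after \eqref{eq:sorting-factorization}.
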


\subsection{Cycle orientation and transposition moves}\label{interaction}

Let $\Cycle$ be a cycle of $\spi$. A triple $(a,b,c)$ of distinct elements of
$\Supp(\Cycle)$ is an \emph{oriented triplet} when its cyclic order in $\Cycle$ is
opposite to its cyclic order in $\epi^{-1}$; that is, when
$\Cycle=(a\dots b\dots c\dots)$ and $\epi^{-1}=(a\dots c\dots b\dots)$. We call $\Cycle$
\emph{oriented} if it contains at least one oriented triplet, and
\emph{unoriented} otherwise.

For a fixed $\epi$, the \emph{annotated part} of a cycle of $\spi$ of length
$k$ is $k_{\mathrm{o}}$ when the cycle is oriented and $k_{\mathrm{u}}$ when it
is unoriented. The \emph{annotated cycle type} of $\spi$ is the parenthesized
tuple of its annotated parts, ordered first by decreasing cycle length and,
for equal lengths, with $\mathrm{o}$ before $\mathrm{u}$. Within such a tuple,
$k_{\varepsilon}^a$ abbreviates $a$ occurrences of the annotated part
$k_{\varepsilon}$. This ordering gives a canonical representation and does
not distinguish cycles having the same annotated part.

\begin{example}
  Let
  $\epi = \cycle[\allowbreak\;]{0, 6, 5, 3, 2, 1, 8, 7, 4, 9, 14, 13, 12, 11, 10}$.
  Then $\spi=(0\ 11\ 13)\allowbreak(1\ 3\ 6)\allowbreak
  (2\ 4\ 8)\allowbreak(5\ 7\ 9)\allowbreak(10\ 12\ 14)$.
  In $\epi^{-1}$, the support symbols of $(5\ 7\ 9)$ appear cyclically as
  $5,9,7$, so $(5,7,9)$ is an oriented triplet.  The support symbols of the
  other four $3$-cycles appear cyclically as $0,11,13$; $1,3,6$; $2,4,8$;
  and $10,12,14$, respectively.
  Those cycles are therefore unoriented. Hence the annotated cycle type is
  $(3_{\mathrm{o}},3_{\mathrm{u}}^4)$.
\end{example}

The factorization in Equation~\eqref{eq:sorting-factorization} has the
equivalent form
\begin{equation*}
  \iota = \spi\tau_1^{-1}\dots\tau_t^{-1}.
\end{equation*}
Consequently, sorting $\epi$ can be viewed as successively
right-multiplying its current algebraic permutation by inverses of
applicable $3$-cycles.  For an applicable $3$-cycle $\tau$, define
\begin{equation*}
  \Delta c^\circ_{\mathrm{odd}}(\spi,\tau)
  :=c^\circ_{\mathrm{odd}}(\spi\tau^{-1})
    -c^\circ_{\mathrm{odd}}(\spi).
\end{equation*}

Meidanis, Dias, and Mira~\cite{MeidanisDias2000,mira2005algebraic}
proved the following classification.

\begin{proposition}[Move types]\label{prop:move-types}
  If $\tau$ is an applicable $3$-cycle, then
  \[
    \Delta c^\circ_{\mathrm{odd}}(\spi,\tau)\in\{-2,0,2\}.
  \]
\end{proposition}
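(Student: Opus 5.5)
We must show that for an applicable $\tau$, $\Delta c^\circ_{\mathrm{odd}}(\spi,\tau)\in\{-2,0,2\}$. The plan is to combine a parity argument, giving that the change is even, with a local counting argument, giving that it has absolute value at most $2$. The only nonzero values allowed are then $\pm2$. Applicability is what excludes the extreme value $\pm 4$ that a general $3$-cycle could produce.

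\textbf{Step 1 (parity).} Write $\alpha=\spi$, an even permutation of $E_N$. Then $\alpha\tau^{-1}$ is also even. For an even permutation the number of even-length cycles is even. Hence $N-c^\circ_{\mathrm{odd}}$ is even, because it is congruent mod $2$ to the total length of the even cycles, which is even. Since $N$ is fixed, $c^\circ_{\mathrm{odd}}(\alpha)$ and $c^\circ_{\mathrm{odd}}(\alpha\tau^{-1})$ have the same parity, so $\Delta$ is even. Equivalently, by Lemma~\ref{lem:norm}, $\Delta=-2\bigl(\norm{\alpha\tau^{-1}}-\norm{\alpha}\bigr)$, which is even.

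\textbf{Step 2 (general bound from the norm).} A $3$-cycle changes the $3$-norm by at most $1$: $\norm{\alpha\tau^{-1}}\le\norm{\alpha}+1$, and symmetrically $\norm{\alpha}\le\norm{\alpha\tau^{-1}}+1$, because $\alpha=(\alpha\tau^{-1})\tau$. Lemma~\ref{lem:norm} then gives $|\Delta|\le 2$. Combined with Step 1, $\Delta\in\{-2,0,2\}$.

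This makes applicability unnecessary for the conclusion, which is worth double-checking. The check is that right multiplication by a $3$-cycle moves only the three symbols $a,b,c$ and changes the number of odd cycles by one of $0,\pm2$. One can verify this directly as a cross-check on Step 2. If $a,b,c$ lie in one cycle of $\alpha$, the product either splits that cycle into three cycles or keeps it as one; the three resulting lengths sum to the original length. If the symbols lie in two cycles, the cycle count is preserved. If they lie in three cycles, the three are merged into one. In every case the count of odd-length pieces changes by an even amount bounded by $2$ in absolute value. For example, a split of an odd cycle gives either three odd pieces ($+2$) or one odd and two even pieces ($0$); a split of an even cycle gives one even and two odd pieces ($+2$ relative to the even cycle, which was not counted) or three even pieces, which is impossible since the lengths would sum to an even total with no odd piece—this is consistent.

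\textbf{Main obstacle.} The subtle point is the subadditivity in Step 2: a product of $\ell$ $3$-cycles times one more $3$-cycle is a product of $\ell+1$ $3$-cycles, which is immediate from the definition of $\norm{\cdot}$. So the real work is only confirming that Lemma~\ref{lem:norm} applies to both $\alpha$ and $\alpha\tau^{-1}$ in $A_N$. I expect no genuine obstacle beyond this bookkeeping. The direct case analysis above serves as an independent sanity check. It also shows which configurations give $+2$, $0$ and $-2$, matching the named move types used later.
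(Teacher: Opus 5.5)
Your main argument (Steps~1--2) is correct. The paper gives no proof here; it attributes the classification to Meidanis, Dias, and Mira. You instead derive it from Lemma~\ref{lem:norm}. Since $\alpha\tau^{-1}$ and $\alpha$ differ by one $3$-cycle on the right, $\bigl\lvert\norm{\alpha\tau^{-1}}-\norm{\alpha}\bigr\rvert\le1$. The norm formula then turns this into $\Delta c^\circ_{\mathrm{odd}}(\spi,\tau)=-2\bigl(\norm{\alpha\tau^{-1}}-\norm{\alpha}\bigr)\in\{-2,0,2\}$, which also makes Step~1 redundant.

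This is short and proves more, since applicability is never used. However, all of its content lies in the equality of Lemma~\ref{lem:norm}. The lower-bound half of that lemma is typically proved by exactly the fact that one $3$-cycle raises $c^\circ_{\mathrm{odd}}$ by at most $2$. Your derivation is legitimate here because the paper takes Lemma~\ref{lem:norm} as a black box, but relative to the underlying literature it runs the logic in reverse. The classical route is a direct case analysis by whether $a,b,c$ lie in one, two, or three cycles of $\spi$. It is self-contained and shows which configurations produce each value. That is the kind of explicit cycle multiplication the paper later uses in Corollary~\ref{cor:oriented-three-cycles}, Proposition~\ref{prop:even-pair-reduction}, and Lemma~\ref{lem:qualifying-criterion}.

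Three side claims should be corrected, although none affects the proof.
\begin{itemize}
\item Your opening sentence says applicability excludes $\pm4$. Your own Step~2 shows that no $3$-cycle, applicable or not, can change $c^\circ_{\mathrm{odd}}$ by $\pm4$.
\item In the cross-check, an even cycle can split into three even cycles: $(1\ 2\ 3\ 4\ 5\ 6)(1\ 5\ 3)=(1\ 6)(2\ 3)(4\ 5)$. That case gives $\Delta c^\circ_{\mathrm{odd}}=0$, so the range is unaffected, but the asserted impossibility is false.
\item Step~1 does not need $\alpha$ to be even, since $c^\circ_{\mathrm{odd}}\equiv N\pmod 2$ for every permutation of $E_N$.
\end{itemize}
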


An applicable $3$-cycle $\tau$ is called a $\mu$-move if $\Delta c^\circ_{\mathrm{odd}}(\spi,\tau)=\mu$. A \emph{qualifying $2$-move} is a $2$-move that creates a fixed point
in the resulting algebraic permutation.

\begin{corollary}[Oriented $3$-cycles]
  \label{cor:oriented-three-cycles}
  If $\Cycle$ is an oriented $3$-cycle of $\spi$, then $\tau=\Cycle$ is
  a qualifying $2$-move that fixes the three symbols of $\Cycle$.
\end{corollary}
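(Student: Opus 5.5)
We must show three things: that $\tau=\Cycle$ is applicable to $\epi$, that $\spi\tau^{-1}$ fixes the three symbols of $\Cycle$, and that $\Delta c^\circ_{\mathrm{odd}}(\spi,\tau)=2$. The plan is to settle applicability first by translating the orientation condition into a cyclic-order statement about $\epi$. After that, the remaining two claims follow from a direct computation of $\spi\tau^{-1}$.

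Write $\Cycle=(a\ b\ c)$. A $3$-cycle has only one cyclic order on its support, so every triple from $\Supp(\Cycle)$ is a rotation of $(a,b,c)$ or of its reversal. Since $\Cycle$ is oriented, some triple in it is oriented. Rotations preserve both cyclic orders, so we may take that triple to be $(a,b,c)$ itself. Orientation then gives $\epi^{-1}=(a\dots c\dots b\dots)$. Reversing a cycle inverts the permutation, so $\epi=(a\dots b\dots c\dots)$. Hence $a,b,c$ appear in $\epi$ in the same cyclic order as in $\tau=(a\ b\ c)$, and $\tau$ is applicable. By Lemma~\ref{lem:applicability}, $\tau\epi$ is an $N$-cycle.

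Next, $\Cycle$ is a disjoint cycle factor of $\spi$. Write $\spi=\Cycle\,\sigma$, where $\sigma$ is the product of the remaining cycles and fixes $a,b,c$. Since disjoint permutations commute,
\[
\spi\tau^{-1}=\sigma\,\Cycle\,\Cycle^{-1}=\sigma .
\]
Therefore $\spi\tau^{-1}$ fixes $a$, $b$ and $c$. Its cycles are those of $\spi$ with the $3$-cycle $\Cycle$ replaced by three $1$-cycles.

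For the count, removing one odd cycle and adding three odd cycles gives $\Delta c^\circ_{\mathrm{odd}}=3-1=2$. So $\tau$ is a $2$-move, and since it creates fixed points it is a qualifying $2$-move.

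The only delicate step is the orientation-to-applicability translation. It relies on the convention that an oriented triplet is read against $\epi^{-1}$, and on the fact that inversion reverses cyclic order; with these in place, the rest is routine.
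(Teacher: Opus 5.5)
Your proof is correct and follows the paper's argument: orientation relative to $\epi^{-1}$ means the cyclic order of $a,b,c$ agrees with that in $\epi$, which makes $\tau=\Cycle$ applicable. Right multiplication by $\Cycle^{-1}$ then replaces the $3$-cycle by three fixed points, raising the odd-cycle count from one to three.
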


\begin{proof}
  The cyclic order of the support of $\Cycle$ in $\Cycle$ is opposite to that
  in $\epi^{-1}$ and hence agrees with that in $\epi$.
  Thus $\tau=\Cycle$ is applicable.  Right multiplication of $\spi$ by
  $\Cycle^{-1}$ replaces $\Cycle$ by three fixed points.  The number of
  odd-length cycles therefore increases from one to three, so $\tau$ is a
  $2$-move and is qualifying.
\end{proof}

\begin{proposition}[Even-pair reduction]
  \label{prop:even-pair-reduction}
  If $\spi$ contains an even-length cycle, then it contains two such cycles
  and admits a qualifying $2$-move. More precisely, let $\Cycle$ and
  $\Cycle'$ be any two even-length cycles of $\spi$. There are consecutive
  symbols $a,b$ in $\Cycle$ and a symbol $c\in\Supp(\Cycle')$ such that
  $\tau=(a\ b\ c)$ is applicable. Right multiplication by $\tau^{-1}$
  replaces $\Cycle$ and $\Cycle'$ by the fixed point $(b)$ and a cycle of
  length $\lvert\Cycle\rvert+\lvert\Cycle'\rvert-1$, leaving every other
  cycle unchanged.
\end{proposition}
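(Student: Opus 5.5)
The approach is to separate the parity statement, the algebraic effect of the move, and the existence of an applicable triple $(a\ b\ c)$, and to handle them in that order.

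\emph{Parity.} By the remark after Equation~\eqref{eq:sorting-factorization}, $\spi$ is even. By Section~\ref{sec:pg}, an even permutation has an even number of even-length cycles. So if one even-length cycle $\Cycle$ exists, a second one $\Cycle'$ exists as well.

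\emph{Effect of the move.} Write $\sigma=\spi$. Suppose $a,b$ are consecutive in $\Cycle$, so $\sigma(a)=b$, and suppose $c\in\Supp(\Cycle')$. Right multiplication by $\tau^{-1}=(a\ c\ b)$ alters only the images of $a$, $b$ and $c$:
\[
  b\mapsto a\mapsto b,\qquad a\mapsto c\mapsto\sigma(c),\qquad c\mapsto b\mapsto\sigma(b).
\]
Thus $b$ becomes a fixed point. Write $\Cycle=(a\ b\ X)$ and $\Cycle'=(c\ Y)$. The two cycles then merge into $(a\ \sigma(c)\ \dots\ c\ \sigma(b)\ \dots)$, of length $\lvert\Cycle\rvert+\lvert\Cycle'\rvert-1$, and every other cycle is untouched. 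Two even cycles are replaced by one fixed point and one odd cycle, so $c^\circ_{\mathrm{odd}}$ rises by $2$. Hence, once applicability is established, $\tau$ is a qualifying $2$-move.

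\emph{Applicability, by contradiction.} Let $\Cycle=(a_1\ a_2\ \dots\ a_k)$ with $k$ even. By Lemma~\ref{lem:applicability}, $(a_i\ a_{i+1}\ c)$ is applicable exactly when $c$ lies on the arc of $\epi$ running from $a_{i+1}$ forward to $a_i$. Suppose this fails for every $i$ and every $c\in\Supp(\Cycle')$. Then every symbol of $\Cycle'$ lies in the open arc $(a_i,a_{i+1})$ of $\epi$ for every $i$.

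The plan is to contradict this using the explicit form $\sigma(x)=\epi^{-1}(x)+1$ (indices mod $N$). Consequently, consecutive elements of a cycle of $\sigma$ correspond to \emph{breakpoints} of $\epi$. First I would show that the arcs $(a_i,a_{i+1})$, read cyclically, wrap around $\epi$ exactly $k$ times. Next, since each arc meets $\Supp(\Cycle')$, the cyclic pattern of positions of $\Cycle\cup\Cycle'$ in $\epi$ would be forced to alternate rigidly, with all of $\Cycle'$ sitting in a single gap common to all arcs. I would then follow $\Cycle'$ through $\sigma$ starting from such a $c$. Because $\Cycle'$ has even length, the parity of the number of boundary crossings would have to be odd, which is incompatible with $\Cycle'$ staying inside one gap. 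This is essentially the Bafna--Pevzner argument that a cycle graph never has an isolated even cycle, translated into the algebraic language.

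The main obstacle is making this crossing count precise. If the direct winding argument becomes messy, my fallback is to pass to the cycle graph of Appendix~\ref{appendix-a} and invoke the corresponding classical lemma of Bafna and Pevzner~\cite{BafnaPevzner1998} through the equivalence with Hultman's Pevzner cycles~\cite{hultman1999toric}.
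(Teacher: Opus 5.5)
Your parity step and your computation of right multiplication by $\tau^{-1}=(a\ c\ b)$ are correct and agree with the paper. The gap is the applicability step, which is the real content of the proposition: you never exhibit the triple. The plan you sketch is neither made precise nor aimed at the right obstruction. That plan consists of winding $k$ times, a ``rigid alternation'' pattern, and an odd number of boundary crossings forced by the even length of $\Cycle'$.

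The even length of $\Cycle'$ plays no role in applicability; it matters only when counting odd cycles afterwards. Likewise the breakpoint formula $\sigma(x)=\epi^{-1}(x)+1$ is irrelevant here. Your fallback to an unnamed Bafna--Pevzner lemma does not identify a result that yields a triple made of two consecutive symbols of $\Cycle$ and one symbol of $\Cycle'$.

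The missing idea is a direct choice of the triple:
\begin{itemize}
\item Fix any $c\in\Supp(\Cycle')$ and read $\epi$ forward starting just after $c$.
\item Let $a$ be the first symbol of $\Supp(\Cycle)$ encountered, and let $b=\Cycle(a)$.
\item Since $b\in\Supp(\Cycle)$ and $b\neq a$, the symbol $b$ is reached after $a$ and before returning to $c$.
\item Hence $a,b,c$ occur in this cyclic order in $\epi$, and $(a\ b\ c)$ is applicable.
\end{itemize}
This works for every $c\notin\Supp(\Cycle)$, with no use of $\Cycle'$'s length.

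In your own contradiction framework, the hypothesis already fails at your first step. Suppose one $c$ lay in all $k$ open arcs from $a_i$ forward to $a_{i+1}$. Then the closed walk $a_1\to a_2\to\cdots\to a_k\to a_1$ along $\epi$ would wind exactly $k$ times. But the arc from $a_j$ to $a_{j+1}$, taken to exclude $a_j$ and include $a_{j+1}$, never covers $a_j$ itself. So each $a_j$ is covered at most $k-1$ times, and the walk winds at most $k-1$ times. No rigid-alternation or crossing-parity analysis is needed.
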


\begin{proof}
  If $e$ is the number of even-length cycles of $\spi$, then
  $\operatorname{sgn}(\spi)=(-1)^e$. Since $\spi$ is even, $e$ is even, so
  the existence of one even-length cycle guarantees a second. Choose
  $c\in\Supp(\Cycle')$ and linearly order the other symbols by reading $\epi$
  immediately after $c$ until returning to $c$. Let $a$ be the first symbol
  of $\Supp(\Cycle)$ in this order, and let $b$ be its successor in $\Cycle$.
  Then $a$ precedes $b$ in the linear order, so $a,b,c$ occur in this cyclic
  order in $\epi$. Hence $\tau=(a\ b\ c)$ is applicable.

  Write $\Cycle=(a\ b\ A)$ and $\Cycle'=(c\ B)$, where $A$ and $B$ are
  strings containing the remaining symbols of the respective cycles. A
  direct multiplication gives
  \[
    \Cycle\Cycle'\tau^{-1}=(a\ B\ c\ A)(b).
  \]
  Both original cycles have even length, whereas the new nontrivial cycle
  has odd length. The number of odd-length cycles therefore increases by
  two, so $\tau$ is a $2$-move; the fixed point $(b)$ makes it qualifying.
\end{proof}

\section{An algebraic formulation of toric equivalence with reflection}\label{toric}

Toric equivalence~\cite{eriksson2001sorting} identifies permutations that
differ only by a cyclic renaming of their symbols.  In the anchored-cycle
representation, this renaming adds a constant modulo $N$ to every symbol and
then rewrites the cycle beginning at $0$.  Korchmaros~\cite{korchmaros2015combinatorial}
combined toric maps with a reverse map on one-line permutations and showed
that they generate a dihedral group of Cayley-graph automorphisms.  Because
that one-line formalism is rather different from the anchored-cycle
representation used here, we define the rotation and reflection directly
on $N$-cycles and prove their group relations and distance invariance.  The
resulting action identifies further $N$-cycles equivalent to $\epi$ and supplies the
symmetry from which the fixed-content encoding below is constructed.

\subsection{Toric rotations}

For $\nu\in\mathbb Z_N$, define
$R_\nu(\epi):=\epi^{\eiota^\nu}$.
Since $C_{S_N}(\eiota)=\langle\eiota\rangle$, the \emph{toric equivalence
  class} of $\epi$ is
\[
  \epi^\circ
  :=\{R_\nu(\epi):\nu\in\mathbb Z_N\}
  =\{\epi^\upsilon:\upsilon\in C_{S_N}(\eiota)\}.
\]

\begin{example}
  Let $\epi=\cycle{0,3,1,2}$ and
  $\eiota=\cycle{0,1,2,3}$.  Then
  \[
    C_{S_4}(\eiota)
    =\{\iota,(0\ 1\ 2\ 3),(0\ 2)(1\ 3),(0\ 3\ 2\ 1)\},
  \]
  and the toric equivalence class is
  \[
    \epi^\circ
    =\{\cycle{0,3,1,2},\cycle{0,2,3,1},
    \cycle{0,2,1,3},\cycle{0,1,3,2}\}.
  \]
\end{example}

\begin{proposition}[Rotation invariance]
  \label{prop:rotation-invariance}
  For every $\nu\in\mathbb Z_N$, if $\tau_1,\ldots,\tau_s$ is a sorting
  sequence for $\epi$, then
  $\tau_1^{\eiota^\nu},\ldots,\tau_s^{\eiota^\nu}$ is a sorting sequence for
  $R_\nu(\epi)$.  In particular, $d_t(R_\nu(\epi))=d_t(\epi)$.
\end{proposition}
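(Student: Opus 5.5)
The plan is to show that conjugation by $\upsilon:=\eiota^\nu$ carries every ingredient of a sorting sequence to the corresponding ingredient for $R_\nu(\epi)$. Conjugation by a fixed permutation is a group automorphism of $S_N$ and preserves cycle type. It also preserves cyclic orders in the relabeled sense: if symbols $x,y,z$ occur in this cyclic order in an $N$-cycle $\beta$, then $\upsilon(x),\upsilon(y),\upsilon(z)$ occur in this cyclic order in $\beta^\upsilon$.

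The key steps, in order, are as follows.

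\emph{Step 1 (applicability).} Let $\beta$ be an $N$-cycle and $\tau$ a $3$-cycle applicable to $\beta$. By Lemma~\ref{lem:applicability}, $\tau\beta$ is an $N$-cycle. Since $(\tau\beta)^\upsilon=\tau^\upsilon\beta^\upsilon$ and conjugation preserves cycle type, $\tau^\upsilon\beta^\upsilon$ is also an $N$-cycle. Applying Lemma~\ref{lem:applicability} again, $\tau^\upsilon$ is applicable to $\beta^\upsilon$. This route avoids any direct bookkeeping of cyclic orders.

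\emph{Step 2 (intermediate products).} Set $\beta_i:=\tau_i\cdots\tau_1\epi$. Then
\[
  \beta_i^\upsilon=\tau_i^\upsilon\cdots\tau_1^\upsilon\,\epi^\upsilon
  =\tau_i^\upsilon\cdots\tau_1^\upsilon R_\nu(\epi).
\]
Each $\beta_i$ is an $N$-cycle, and $\tau_{i+1}$ is applicable to $\beta_i$. By Step 1, $\tau_{i+1}^\upsilon$ is applicable to $\beta_i^\upsilon$, and every intermediate product $\beta_i^\upsilon$ is an $N$-cycle. So the conjugated moves form a valid sequence of transpositions starting from $R_\nu(\epi)$.

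\emph{Step 3 (target).} The final product is $\beta_s^\upsilon=\eiota^{\eiota^\nu}=\eiota$, because $\eiota^\nu$ commutes with $\eiota$ (it lies in $C_{S_N}(\eiota)$). Hence the conjugated sequence sorts $R_\nu(\epi)$, and the target $\eiota$ is unchanged. This is the one point where the choice of conjugator $\eiota^\nu$ matters; conjugation by a general permutation would move the target.

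\emph{Step 4 (distance).} Step 3 gives $d_t(R_\nu(\epi))\le d_t(\epi)$. For the reverse inequality, apply the same argument with $-\nu$. Since $R_{-\nu}(R_\nu(\epi))=\epi^{\eiota^{-\nu}\eiota^{\nu}}=\epi$, we get $d_t(\epi)\le d_t(R_\nu(\epi))$, and equality follows.

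One subtlety must be checked: that $R_\nu(\epi)$ is a legitimate SBT instance. It is an $N$-cycle on $E_N$, and its anchored writing simply starts at $0$; the distance $d_t$ is defined for any $N$-cycle, so no issue arises.

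I expect no step to be a real obstacle. The only care needed is in Step 1, namely using the cycle-type formulation of Lemma~\ref{lem:applicability} rather than arguing directly about cyclic orders, and in Step 3, verifying the centralizer property that keeps the target fixed.
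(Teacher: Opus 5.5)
Your proof is correct and takes essentially the same approach as the paper. You conjugate every intermediate product by $\eiota^\nu$, note that $N$-cycles remain $N$-cycles and that $\eiota$ is fixed because $\eiota^\nu$ centralizes it, and obtain the reverse inequality with $-\nu$. Your explicit appeal to Lemma~\ref{lem:applicability} in Step~1 only spells out why the conjugated moves are applicable, a point the paper leaves implicit.
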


\begin{proof}
  Conjugation by $\eiota^\nu$ sends every intermediate cycle according to
  \begin{equation*}
  (\tau_i\cdots\tau_1\epi)^{\eiota^\nu}
  =\tau_i^{\eiota^\nu}\cdots\tau_1^{\eiota^\nu}R_\nu(\epi),
  \end{equation*}
  and the resulting permutation remains an $N$-cycle.  Conjugation also fixes
  the target $\eiota$.
  Thus the conjugated moves form a sorting sequence of the same length.
  Conjugation by $(\eiota^\nu)^{-1}$ gives the inverse correspondence, so the
  distances are equal.
\end{proof}

\subsection{Reflection}

Define the involution $\Reflect\in S_N$ by
$\Reflect(x):=-x\pmod N$ for $x\in E_N$.
Thus $\Reflect$ fixes $0$ and reverses the cyclic order of the remaining
symbols.
For every $N$-cycle $\epi$, define $F(\epi):=(\epi^{-1})^{\Reflect}$.
The identities $\Reflect^2=\iota$ and
$\eiota^{\Reflect}=\eiota^{-1}$ imply $F(\eiota)=\eiota$.

\begin{proposition}[Extended toric action]
  \label{prop:extended-toric-action}
  For $\nu,\xi\in\mathbb Z_N$, we have
  $R_\nu\circ R_\xi=R_{\nu+\xi}$,
  $F^2=\operatorname{id}$, and $F\circ R_\nu=R_{-\nu}\circ F$.
  Hence the maps $R_\nu$ and $F$ define an action of the dihedral group $D_N$
  on the set of $N$-cycles.
\end{proposition}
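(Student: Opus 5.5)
Each of the three identities reduces to elementary rules for conjugation, namely $(\delta^{\epsilon})^{\epsilon'}=\delta^{\epsilon'\epsilon}$ and $(\delta^{-1})^{\epsilon}=(\delta^{\epsilon})^{-1}$, together with the two facts noted before the statement: $\Reflect^2=\iota$ and $\eiota^{\Reflect}=\eiota^{-1}$, which means $\Reflect\eiota\Reflect^{-1}=\eiota^{-1}$.

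The first step checks that these maps really send $N$-cycles to $N$-cycles. Conjugation preserves cycle type, and so does inversion, so this is immediate. Next comes $R_\nu\circ R_\xi$. Applying the definitions gives $R_\nu(R_\xi(\epi))=(\epi^{\eiota^\xi})^{\eiota^\nu}=\epi^{\eiota^\nu\eiota^\xi}=\epi^{\eiota^{\nu+\xi}}$. Exponents may be read modulo $N$ because $\eiota^N=\iota$. The same computation with $\nu=0$ shows that $R_0=\operatorname{id}$.

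For $F^2$, write $F(F(\epi))=\bigl(((\epi^{-1})^{\Reflect})^{-1}\bigr)^{\Reflect}=(\epi^{\Reflect})^{\Reflect}=\epi^{\Reflect^2}=\epi$. For the twisted commutation, compute $F(R_\nu(\epi))=\bigl((\epi^{\eiota^\nu})^{-1}\bigr)^{\Reflect}=(\epi^{-1})^{\Reflect\eiota^\nu}$. The key step is to rewrite $\Reflect\eiota^\nu=\eiota^{-\nu}\Reflect$, which follows from $\Reflect\eiota\Reflect^{-1}=\eiota^{-1}$ by raising to the $\nu$th power. This gives $(\epi^{-1})^{\eiota^{-\nu}\Reflect}=\bigl((\epi^{-1})^{\Reflect}\bigr)^{\eiota^{-\nu}}=R_{-\nu}(F(\epi))$. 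Only the order of the conjugations requires care here.

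Finally, I define the action by letting $r_\nu$ act as $R_\nu$ and $s$ act as $F$. The relations just verified give $r_1^N=\operatorname{id}$, $s^2=\operatorname{id}$ and $sr_1s=r_1^{-1}$. The last one holds because $F R_1 F=R_{-1}F F=R_{-1}$. Therefore the assignment extends to a homomorphism from $D_N$ into the group of bijections of the set of $N$-cycles; each $R_\nu$ and $F$ is a bijection since it has an inverse among these maps. Such a homomorphism is exactly an action.

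I do not expect a real obstacle here. The only delicate point is keeping the convention $\delta^\epsilon=\epsilon\delta\epsilon^{-1}$ consistent when composing conjugations, so that $R_{-\nu}$ does not come out as $R_\nu$ by mistake.
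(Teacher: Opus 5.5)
Your proof is correct and follows the same route as the paper. You compose conjugations for $R_\nu\circ R_\xi=R_{\nu+\xi}$, cancel the two inversions and use $\Reflect^2=\iota$ for $F^2=\operatorname{id}$, rewrite $\Reflect\eiota^\nu=\eiota^{-\nu}\Reflect$ for the twisted commutation, and then check the defining relations of $D_N$, spelling out the conjugation bookkeeping and the bijectivity and homomorphism step that the paper leaves implicit.
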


\begin{proof}
  The first relation follows by composing conjugations by powers of
  $\eiota$.  For every $N$-cycle $\epi$, the second relation follows from
  \[
    F^2(\epi)
    =\left(\left((\epi^{-1})^{\Reflect}\right)^{-1}\right)^{\Reflect}
    =\left((\epi^{-1})^{-1}\right)^{\Reflect^2}
    =\epi^{\Reflect^2}
    =\epi.
  \]
  Finally,
  $\Reflect\eiota^\nu=(\eiota)^{-\nu}\Reflect$, which gives the third relation.
  These are the defining rotation--reflection relations of $D_N$.
\end{proof}

The \emph{extended-toric equivalence class} of $\epi$ is its $D_N$-orbit,
namely
\[
  {\epi^\circ}_{\mathrm{ext}}
  :=\{R_\nu(\epi),F(R_\nu(\epi)):\nu\in\mathbb Z_N\}
  =\left\{\epi^\upsilon,
  \left((\epi^\upsilon)^{-1}\right)^{\Reflect}
  :\upsilon\in C_{S_N}(\eiota)\right\}.
\]

\begin{example}
  Let $\epi=\cycle{0,3,5,4,1,2}$. The toric equivalence class of $\epi$ is $\epi^\circ=\{\cycle{0,3,5,4,1,2},\allowbreak
    \cycle{0,5,2,3,1,4},\allowbreak
    \cycle{0,3,4,2,5,1},\allowbreak
    \cycle{0,2,1,4,5,3},\allowbreak
    \cycle{0,4,1,3,2,5},\allowbreak
    \cycle{0,1,5,2,4,3}\}$, whereas ${\epi^\circ}_{\mathrm{ext}}=\epi^\circ \cup \{\cycle{0,4,5,2,1,3},\allowbreak
    \cycle{0,2,5,3,4,1},\allowbreak
    \cycle{0,5,1,4,2,3},\allowbreak
    \cycle{0,3,1,2,5,4},\allowbreak
    \cycle{0,1,4,3,5,2},\allowbreak
    \cycle{0,3,2,4,1,5}\}$
\end{example}

\begin{proposition}[Reflection preserves sorting]
  \label{prop:reflection-preserves-sorting}
  Let $\tau$ be applicable to $\epi$, and set
  $\epi'=\tau\epi$.  Define
  $\widehat\tau:=\left((\tau^{-1})^{\epi^{-1}}\right)^{\Reflect}$.
  Then $\widehat\tau$ is applicable to $F(\epi)$ and
  $F(\epi')=\widehat\tau F(\epi)$.
  Consequently, reflection bijects sorting sequences of $\epi$ and
  $F(\epi)$ without changing their lengths, and
  $d_t(F(\epi))=d_t(\epi)$.
\end{proposition}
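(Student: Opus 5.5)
The plan is a direct algebraic computation of $F(\epi')$, followed by an appeal to Lemma~\ref{lem:applicability} for applicability, so that no cyclic orders need to be tracked by hand. First I will rewrite $F(\epi')=((\tau\epi)^{-1})^{\Reflect}=(\epi^{-1}\tau^{-1})^{\Reflect}$. I will then insert $\epi\epi^{-1}$ to get
\[
  \epi^{-1}\tau^{-1}=(\epi^{-1}\tau^{-1}\epi)\,\epi^{-1}=(\tau^{-1})^{\epi^{-1}}\epi^{-1},
\]
using the convention $\delta^\epsilon=\epsilon\delta\epsilon^{-1}$ with $\epsilon=\epi^{-1}$. Conjugation by $\Reflect$ is a group automorphism, so it distributes over this product:
\[
  F(\epi')=\bigl((\tau^{-1})^{\epi^{-1}}\bigr)^{\Reflect}(\epi^{-1})^{\Reflect}=\widehat\tau\,F(\epi).
\]
This establishes the displayed identity.

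For applicability, note that $\widehat\tau$ is obtained from the $3$-cycle $\tau^{-1}$ by two conjugations. Since conjugation preserves cycle type, $\widehat\tau$ is a $3$-cycle. Likewise $F(\epi)$ and $F(\epi')$ are conjugates of inverses of $N$-cycles, hence $N$-cycles. So $\widehat\tau F(\epi)=F(\epi')$ is an $N$-cycle, and Lemma~\ref{lem:applicability}, applied with $\beta=F(\epi)$, shows that $\widehat\tau$ is applicable to $F(\epi)$. This avoids checking the cyclic order of the three symbols of $\widehat\tau$ in $F(\epi)$ directly.

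For sorting sequences, let $\tau_1,\dots,\tau_s$ sort $\epi$, and write $\epi_i=\tau_i\cdots\tau_1\epi$ with $\epi_0=\epi$. Applying the previous step to each pair $(\epi_{i-1},\tau_i)$ gives applicable $3$-cycles $\widehat\tau_i$, computed relative to $\epi_{i-1}$, with $F(\epi_i)=\widehat\tau_iF(\epi_{i-1})$. Inducting on $i$ shows that every intermediate product for $F(\epi)$ is $F(\epi_i)$, and the final one is $F(\eiota)=\eiota$. The sequence therefore sorts $F(\epi)$ and has the same length $s$.

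The only point needing care is bijectivity. Each hat depends on the current intermediate cycle, so the map on sequences is not a fixed conjugation as in Proposition~\ref{prop:rotation-invariance}. I would handle this through the intermediate cycles. A sorting sequence is determined by its chain of intermediate $N$-cycles $\epi_0,\dots,\epi_s=\eiota$, since $\tau_i=\epi_i\epi_{i-1}^{-1}$. The construction sends this chain to $F(\epi_0),\dots,F(\epi_s)$. Applying the same construction to the result returns the original chain, because $F^2=\operatorname{id}$ by Proposition~\ref{prop:extended-toric-action}. Hence the map is an involution between sorting sequences of $\epi$ and of $F(\epi)$ and preserves length. Taking minima gives $d_t(F(\epi))=d_t(\epi)$.
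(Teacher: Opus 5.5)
Your proposal is correct and follows the paper's proof essentially step for step. Both use the factorization $\epi^{-1}\tau^{-1}=(\tau^{-1})^{\epi^{-1}}\epi^{-1}$ and distribute conjugation by $\Reflect$, then obtain applicability from Lemma~\ref{lem:applicability} and bijectivity from $F^2=\operatorname{id}$. Your argument via the chain of intermediate cycles only spells out the paper's one-line bijectivity claim.
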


\begin{proof}
  Since
  $\epi^{-1}\tau^{-1}=(\tau^{-1})^{\epi^{-1}}\epi^{-1}$,
  \[
    F(\epi')
    =(\epi^{-1}\tau^{-1})^{\Reflect}
    =\left((\tau^{-1})^{\epi^{-1}}\right)^{\Reflect}
    (\epi^{-1})^{\Reflect}
    =\widehat\tau F(\epi).
  \]
  The move $\widehat\tau$ is a $3$-cycle, and both $F(\epi)$ and
  $F(\epi')$ are $N$-cycles.  It is therefore applicable to $F(\epi)$ by
  Lemma~\ref{lem:applicability}.  Applying this identity at each step of a
  sorting sequence gives a sequence ending at
  $F(\eiota)=\eiota$.  Since $F$ is an involution, this correspondence is a
  bijection and the distances are equal.
\end{proof}

\subsection{Cyclic targets}\label{sec:cyclic-targets}

A \emph{cyclic-target pair} on $E_N$ is a pair $(\omega,\beta)$ in which
$\beta$ and $\rho:=\omega\beta$ are $N$-cycles. We call $\beta$ the
\emph{current cycle}, $\rho$ the \emph{target cycle}, and the necessarily even permutation
$\omega=\rho\beta^{-1}$ the \emph{algebraic permutation}. A cyclic-target pair
with target $\eiota$ is an \emph{ordinary SBT pair}.

\begin{proposition}[Renaming to the ordinary target]
  \label{prop:ordinary-target-renaming}
  Every cyclic-target pair $(\omega,\beta)$ admits a renaming
  $\upsilon\in S_N$ such that $\rho^\upsilon=\eiota$. For any such
  $\upsilon$, the pair $(\omega^\upsilon,\beta^\upsilon)$ is an ordinary
  SBT pair, with $\omega^\upsilon=\eiota(\beta^\upsilon)^{-1}$.
  For every $3$-cycle $\tau$, the cycle $\tau$ is applicable to $\beta$ if
  and only if $\tau^\upsilon$ is applicable to $\beta^\upsilon$, and in
  that case
  \[
    \bigl((\omega\tau^{-1})^\upsilon,(\tau\beta)^\upsilon\bigr)
    =\bigl(\omega^\upsilon(\tau^\upsilon)^{-1},
      \tau^\upsilon\beta^\upsilon\bigr).
  \]
\end{proposition}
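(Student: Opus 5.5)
Existence comes first. Since $\rho$ and $\eiota$ are both $N$-cycles, they have the same cycle type. By the conjugacy fact recalled in Section~\ref{sec:pg}, there is a $\upsilon\in S_N$ with $\upsilon\rho\upsilon^{-1}=\eiota$. Concretely, if $\rho=(r_0\ r_1\ \dots\ r_n)$, we take $\upsilon(r_i)=i$. That is existence. Now fix any such $\upsilon$.

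Next, conjugation by $\upsilon$ is an automorphism of $S_N$ that preserves cycle type. Hence $\beta^\upsilon$ is an $N$-cycle, and $\omega^\upsilon\beta^\upsilon=(\omega\beta)^\upsilon=\rho^\upsilon=\eiota$, which is an $N$-cycle. So $(\omega^\upsilon,\beta^\upsilon)$ is a cyclic-target pair with target $\eiota$, that is, an ordinary SBT pair. Solving gives $\omega^\upsilon=\eiota(\beta^\upsilon)^{-1}$.

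For applicability I will use Lemma~\ref{lem:applicability} rather than chase cyclic orders. The $3$-cycle $\tau$ is applicable to $\beta$ iff $\tau\beta$ is an $N$-cycle. Also $(\tau\beta)^\upsilon=\tau^\upsilon\beta^\upsilon$, and conjugation preserves cycle type. So $\tau\beta$ is an $N$-cycle iff $\tau^\upsilon\beta^\upsilon$ is. Since $\tau^\upsilon$ is again a $3$-cycle and $\beta^\upsilon$ is an $N$-cycle, the lemma applies on both sides, which gives the equivalence. (Alternatively, $\upsilon$ renames symbols and so carries the cyclic order of $\beta$ to that of $\beta^\upsilon$, but the lemma-based route avoids that bookkeeping.)

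The final identity follows because conjugation is a homomorphism commuting with inversion: $(\omega\tau^{-1})^\upsilon=\omega^\upsilon(\tau^\upsilon)^{-1}$ and $(\tau\beta)^\upsilon=\tau^\upsilon\beta^\upsilon$. I expect no real obstacle. The only point needing care is to invoke Lemma~\ref{lem:applicability} in the general form stated, for an arbitrary $N$-cycle $\beta$ rather than only for $\epi$, so that the "if and only if" transfers cleanly.
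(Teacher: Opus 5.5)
Your proof is correct and follows the paper's argument: existence comes from conjugacy of $N$-cycles, and both the target identity and the displayed pair come from conjugating products factorwise. The only difference is that you transfer applicability through Lemma~\ref{lem:applicability} together with invariance of cycle type under conjugation, whereas the paper simply notes that renaming symbols preserves cyclic order; both justifications are valid.
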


\begin{proof}
  All $N$-cycles are conjugate, so such a $\upsilon$ exists. Conjugating
  $\rho=\omega\beta$ gives
  $\eiota=\omega^\upsilon\beta^\upsilon$. Applying the same renaming of
  symbols preserves cyclic order and therefore applicability. Conjugating a
  product conjugates each factor, which gives the displayed identity.
\end{proof}

We extend the definitions of cycle orientation, annotated cycle type,
$\mu$-move, and qualifying $2$-move from Subsection~\ref{interaction} by
replacing $(\spi,\epi)$ with $(\omega,\beta)$. An applicable $3$-cycle
$\tau$ sends $(\omega,\beta)$ to $(\omega\tau^{-1},\tau\beta)$ and leaves
the target unchanged, since
$(\omega\tau^{-1})(\tau\beta)=\rho$. A sorting sequence takes $\beta$
to $\rho$ by such moves.

Let $z$ be a fixed point of $\omega$, and set $p=\beta^{-1}(z)$. Since
$\rho=\omega\beta$, both $\beta$ and $\rho$ contain the edge $p\to z$.
We call it the \emph{fixed-point edge}. Its \emph{contraction} deletes $z$
from both cycles to form a shorter cyclic-target pair. Renaming the target to
$\eiota$ turns this edge into an adjacency and commutes with deletion. Hence
Proposition~\ref{prop:contraction-preserves-distance} shows that contraction
preserves transposition distance.

To extend extended-toric equivalence to a cyclic-target pair
$(\omega,\beta)$ with target $\rho$, choose $\upsilon$ such that
$\rho^\upsilon=\eiota$ and associate the class
${(\beta^\upsilon)^\circ}_{\mathrm{ext}}$ with the pair. If $\eta$ is
another such renaming, then
$\eta\upsilon^{-1}\in C_{S_N}(\eiota)=\langle\eiota\rangle$, and hence
$\beta^\eta=(\beta^\upsilon)^{\eta\upsilon^{-1}}$ lies in the same class.
The associated class is therefore well-defined. Two cyclic-target pairs are
\emph{extended-torically equivalent} when their associated classes agree.

\section{Fixed-content words and twisted bracelets}
\label{sec:twisted-bracelets}

Fix the current cycle $\beta=(0\ 1\ \cdots\ N-1)$. An encoding word records
the supports and cyclic orders of an algebraic permutation $\omega$ relative
to $\beta$. The condition that $\omega\beta$ is an $N$-cycle identifies the
words that represent cyclic-target pairs. Rotation and twisted reflection
together with the auxiliary cycle symmetries then identify precisely the
extended-toric classes. We develop these steps in order, prove the orbit
correspondence, and then derive direct generation and counting.

\subsection{Encoding permutations}

Let $\mathcal P$ be a fixed-point-free annotated cycle type. Expand its
multiplicities and write
$\mathcal P=\bigl((\lambda_1)_{\varepsilon_1},\ldots,
(\lambda_\ell)_{\varepsilon_\ell}\bigr)$, and set
$N:=\sum_{i=1}^{\ell}\lambda_i$, where $\lambda_i\geq2$,
$\varepsilon_i\in\{\mathrm{u},\mathrm{o}\}$, and
$\varepsilon_i=\mathrm{o}$ requires $\lambda_i\geq3$, as all $2$-cycles are
unoriented. The indices distinguish repeated annotated parts only
temporarily.

Associate a color $c_i$ with part $i$. If
$\varepsilon_i=\mathrm{u}$, cycle $i$ contributes $\lambda_i$ copies of
the unranked symbol $c_i$. If $\varepsilon_i=\mathrm{o}$, it contributes
one copy of $(c_i,h)$ for each $h\in\{0,1,\ldots,\lambda_i-1\}$.
The annotated type therefore prescribes the content of the word.

An \emph{encoding word of type $\mathcal P$} is a word with this fixed
content in which no oriented color $c_i$ has occurrence ranks
\begin{equation}
  h,h-1,\ldots,h-\lambda_i+1\pmod{\lambda_i}
  \label{eq:forbidden-unoriented-ranks}
\end{equation}
for any $h\in\mathbb Z_{\lambda_i}$ when its positions are read from left to
right. These cyclically descending sequences encode the unoriented cycle
induced by $\beta^{-1}$ on that support, contradicting
$\varepsilon_i=\mathrm{o}$. Let
$\mathcal W(\mathcal P)$ be the set of encoding words of type $\mathcal P$.
We also call them \emph{raw encoding words} because their auxiliary color
names and rank origins have not been quotiented out.

For each annotated part $m_{\varepsilon}$, a permutation of the indices
$i$ satisfying $(\lambda_i)_{\varepsilon_i}=m_{\varepsilon}$ acts by permuting
their colors. Each oriented color has the additional cyclic action
\((c_i,k)\mapsto(c_i,k+h_i)\) for
$h_i\in\mathbb Z_{\lambda_i}$. These actions together form the
\emph{auxiliary cycle group} $\mathcal A(\mathcal P)$. Thus every
$a\in\mathcal A(\mathcal P)$ has color permutations $\varphi$ within equal
annotated parts and rank shifts $h_i$ on oriented colors, and it acts by
\begin{equation*}
  a\cdot c_i=c_{\varphi(i)},
  \qquad
  a\cdot(c_i,k)=(c_{\varphi(i)},k+h_i\bmod\lambda_i).
\end{equation*}
The first formula applies to unranked symbols and the second to ranked
symbols. This action preserves $\mathcal W(\mathcal P)$.

\begin{example}[Auxiliary cycle symmetries]
  \label{ex:auxiliary-cycle-symmetries}
  Let $\mathcal P=(3_{\mathrm{o}}^2)$ and initially name its
  colors $A$ and $B$, writing ranks as subscripts. The word
  $B_2A_1B_0A_2B_1A_0$ belongs to the same
  $\mathcal A(\mathcal P)$-orbit as
  $C_0D_0C_1D_1C_2D_2$.
  The auxiliary transformation sends $B$ to $C$ with rank shift $-2$ and
  sends $A$ to $D$ with rank shift $-1$.
\end{example}

We decode $\mathbf w\in\mathcal W(\mathcal P)$ as follows. For each color $c_i$, the positions
in $\mathbf w$ occupied by symbols of color $c_i$ form the support of the cycle that
$c_i$ encodes. If $c_i$ is unoriented, let
$p_{i,0}<p_{i,1}<\cdots<p_{i,\lambda_i-1}$ be its positions, and define
$\Cycle_i(\mathbf w):=(p_{i,0}\ p_{i,\lambda_i-1}\ p_{i,\lambda_i-2}\ \cdots\ p_{i,1})$.
This is the cycle on that support in the cyclic order induced by
$\beta^{-1}$.
If $c_i$ is oriented, let $p_{i,k}$ be the unique position in $\mathbf w$ carrying
$(c_i,k)$, and define $\Cycle_i(\mathbf w):=(p_{i,0}\ p_{i,1}\ \cdots\ p_{i,\lambda_i-1})$.
Finally, set $\omega_{\mathbf w}:=\prod_{i=1}^{\ell}\Cycle_i(\mathbf w)$.
The cycles have disjoint supports, so the product is independent of how its
factors are arranged. Both words
in Example~\ref{ex:auxiliary-cycle-symmetries} therefore decode to
$\omega_{\mathbf w}=(0\ 2\ 4)(1\ 3\ 5)$ when
$\beta=(0\ 1\ \cdots\ 5)$.

\begin{proposition}[Fixed-content encoding modulo auxiliaries]
  \label{prop:fixed-content-encoding}
  The decoder is constant on the $\mathcal A(\mathcal P)$-orbits and induces
  a bijection from $\mathcal W(\mathcal P)/\mathcal A(\mathcal P)$ to the
  fixed-point-free permutations of annotated cycle type $\mathcal P$
  relative to $\beta$.
\end{proposition}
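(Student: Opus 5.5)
The proof has four steps: the decoder is well defined with the right annotated type, it is constant on $\mathcal A(\mathcal P)$-orbits, the induced map is surjective, and it is injective on orbits. Throughout, identify positions $0,\dots,N-1$ of $\mathbf w$ with the symbols of $E_N$, so $\beta$ sends each position to the next one cyclically.

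\emph{Well-definedness and type.} Each color $c_i$ occupies exactly $\lambda_i\ge 2$ positions. The colors partition $\mathbb Z_N$, so $\omega_{\mathbf w}$ is a product of disjoint $\lambda_i$-cycles covering $E_N$, hence fixed-point-free with cycle type $(\lambda_1,\dots,\lambda_\ell)$. For an unoriented color, $\Cycle_i(\mathbf w)$ lists its support in decreasing order, i.e.\ in the cyclic order of $\beta^{-1}$. It therefore has no oriented triplet and is unoriented. For an oriented color, a cycle on a fixed support is unoriented exactly when its cyclic order agrees with that of $\beta^{-1}$. Reading positions left to right, this happens exactly when the ranks form one of the cyclically descending sequences \eqref{eq:forbidden-unoriented-ranks}. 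The encoding-word condition excludes these, so $\Cycle_i(\mathbf w)$ is oriented and $\omega_{\mathbf w}$ has annotated type $\mathcal P$. It is worth stating explicitly that, for $\lambda_i=3$, non-descending means ascending, so the rank condition matches the triplet definition of orientation. This lemma-level check, that ``no oriented triplet'' is equivalent to ``cyclic order equals that of $\beta^{-1}$'', is the one genuinely non-bookkeeping point, and I expect it to be the main obstacle.

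\emph{Constancy on orbits.} Let $a\in\mathcal A(\mathcal P)$. A color permutation $\varphi$ only exchanges colors with the same annotated part. It preserves the set of supports with their orientation types, so it merely permutes the factors of the disjoint product. A rank shift $k\mapsto k+h_i$ replaces $(p_{i,0}\ \cdots\ p_{i,\lambda_i-1})$ by a cyclic rotation of the same cycle. Hence $\omega_{a\cdot\mathbf w}=\omega_{\mathbf w}$.

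\emph{Surjectivity.} Given a fixed-point-free $\omega$ of annotated type $\mathcal P$ relative to $\beta$, order its cycles and match them bijectively with indices $i$ of the same annotated part. Paint the support of cycle $i$ with color $c_i$. If the cycle is oriented, pick a starting element $p$ and assign rank $k$ to $\omega^k(p)$. This gives the right content, and the rank condition holds because the cycle is oriented. By construction the word decodes to $\omega$.

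\emph{Injectivity on orbits.} Suppose $\omega_{\mathbf w}=\omega_{\mathbf w'}$. The cycles of the two decodings coincide, so each support is a color class in both words. Define $\varphi$ by sending the color of a support in $\mathbf w$ to its color in $\mathbf w'$; this is legitimate because the matched cycles have the same length and orientation type. For an oriented support, both rank labelings list the same cycle starting at possibly different points, so they differ by a shift $h_i$. The resulting $a=(\varphi,(h_i))$ satisfies $a\cdot\mathbf w=\mathbf w'$. Unoriented colors carry no ranks, so nothing further is needed there.
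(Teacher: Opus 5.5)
Your proposal is correct and takes essentially the same route as the paper: the paper also checks that decoding yields the prescribed annotated type, builds a preimage by assigning equal-part colors and successive ranks around each oriented cycle, and notes that any two preimages differ exactly by color permutations and rank-origin shifts, which combines your constancy and injectivity steps. The paper uses the fact you flag as the main obstacle without proof, and it is immediate. A cyclic order on a support is determined by the cyclic orders of its triples, since the successor of $a$ is the unique $b$ such that $a,b,c$ occur in that cyclic order for every other $c$.
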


\begin{proof}
  Every unranked color decodes to the unique unoriented cyclic order on its
  support.  For every ranked color, the successive ranks determine its cycle;
  excluding
  Equation~\eqref{eq:forbidden-unoriented-ranks} makes that cycle oriented.
  Thus $\omega_{\mathbf w}$ has the prescribed annotated type.

  Conversely, let $\omega$ have annotated type $\mathcal P$. Its unoriented
  cycles are determined by their supports. Assign the available equal-part
  colors to its cycles and assign successive ranks around each oriented cycle.
  This produces an encoding word $\mathbf w$ with
  $\omega_{\mathbf w}=\omega$. Two choices differ exactly by permutations of
  equal-part colors and cyclic shifts of oriented rank origins. Consequently,
  the set $\{\mathbf w\in\mathcal W(\mathcal P):
  \omega_{\mathbf w}=\omega\}$ is a single $\mathcal A(\mathcal P)$-orbit.
\end{proof}

Set $\rho_{\mathbf w}:=\omega_{\mathbf w}\beta$. An encoding word is
\emph{realizable} when $\rho_{\mathbf w}$ is an $N$-cycle, and define
$\mathcal V(\mathcal P):=\{\mathbf w\in\mathcal W(\mathcal P):
\rho_{\mathbf w}\text{ is an }N\text{-cycle}\}$.

These are exactly the words that decode to cyclic-target pairs of annotated
type $\mathcal P$. Proposition~\ref{prop:ordinary-target-renaming} renames
each such pair to an ordinary SBT pair. In particular,
$\mathcal V(\mathcal P)$ is empty when $\mathcal P$ does not occur in $A_N$.

\begin{example}[A realizable and a nonrealizable word]
  \label{ex:word-realizability}
  Let
  $\mathcal P=(2_{\mathrm{u}}^2)$ and
  $\beta=(0\ 1\ 2\ 3)$.  For $\mathbf w=AABB$,
  $\omega_{\mathbf w}=(0\ 1)(2\ 3)$ and
  $\rho_{\mathbf w}=(0)(1\ 3)(2)$, so $\mathbf w$ is not realizable.  By
  contrast, $\mathbf v=ABAB$ gives
  $\omega_{\mathbf v}=(0\ 2)(1\ 3)$ and
  $\rho_{\mathbf v}=(0\ 3\ 2\ 1)$, and hence
  $\mathbf v\in\mathcal V(\mathcal P)$.
\end{example}

\subsection{The auxiliary--dihedral action}

Define an involution $\theta$ on symbols by fixing every unranked symbol
and setting $\theta(c_i,k):=(c_i,-k\bmod\lambda_i)$ for every oriented
color. For $\nu\in\mathbb Z_N$ and $\mathbf w\in\mathcal W(\mathcal P)$,
define
\begin{equation}
  \label{eq:twisted-word-action}
  (r_\nu\cdot\mathbf w)_j:=w_{j+\nu},
  \qquad
  (s\cdot\mathbf w)_j:=\theta(w_{-j})
  \qquad(j\in\mathbb Z_N).
\end{equation}
Thus \(s\) reflects word positions by \(j\mapsto-j\) and additionally
inverts the ranks attached to symbols of oriented colors;
unranked symbols are otherwise unchanged.

\begin{proposition}[Twisted dihedral action]
  \label{prop:twisted-dihedral-action}
  The operations in Equation~\eqref{eq:twisted-word-action} preserve
  $\mathcal W(\mathcal P)$ and satisfy
  $r_\nu r_\xi=r_{\nu+\xi}$, $r_0=\operatorname{id}$,
  $s^2=\operatorname{id}$, and $s r_\nu s=r_{-\nu}$ for
  $\nu,\xi\in\mathbb Z_N$.
  They therefore define an action of $D_N$ on $\mathcal W(\mathcal P)$.
\end{proposition}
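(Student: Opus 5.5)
We check two things separately: that $r_\nu$ and $s$ send encoding words to encoding words, and that they satisfy the dihedral relations. The relations are index computations, so we do them first.

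For the rotations, we have $(r_\nu r_\xi\cdot\mathbf w)_j=(r_\xi\cdot\mathbf w)_{j+\nu}=w_{j+\nu+\xi}$, hence $r_\nu r_\xi=r_{\nu+\xi}$, and $r_0=\operatorname{id}$ is immediate. Since $\theta$ is an involution, $(s^2\cdot\mathbf w)_j=\theta((s\cdot\mathbf w)_{-j})=\theta(\theta(w_j))=w_j$. For the conjugation relation we compute directly:
\[
(s r_\nu s\cdot\mathbf w)_j=\theta\bigl((r_\nu s\cdot\mathbf w)_{-j}\bigr)=\theta\bigl((s\cdot\mathbf w)_{-j+\nu}\bigr)=\theta\theta(w_{j-\nu})=w_{j-\nu},
\]
which is $(r_{-\nu}\cdot\mathbf w)_j$. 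These are exactly the defining relations in the presentation of $D_N$ from Section~\ref{sec:comb-prelim}. Because $\mathcal W(\mathcal P)$ is a finite set and $r_1$, $s$ are bijections on it (once preservation is known), the presentation yields a homomorphism from $D_N$ into the permutations of $\mathcal W(\mathcal P)$, which is the claimed action.

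\emph{Preservation of content.} A rotation only permutes positions, so it keeps the content. The reflection $s$ permutes positions by $j\mapsto -j$ and applies $\theta$ letter by letter. On each oriented color, $\theta$ is the bijection $k\mapsto -k$ of $\mathbb Z_{\lambda_i}$, and it fixes unranked symbols, so it permutes the content multiset. Hence $s$ keeps the content as well.

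\emph{Preservation of the forbidden pattern.} This is the main step. For an oriented color $c_i$, record the cyclic sequence of its ranks as its positions are read in cyclic position order. Condition~\eqref{eq:forbidden-unoriented-ranks} says that, read from left to right, this sequence is not $h,h-1,\dots,h-\lambda_i+1$. Any cyclic rotation of such a sequence is again of the same form with a different $h$. So the condition depends only on the cyclic order of ranks around $\mathbb Z_N$, and $r_\nu$ does not change that cyclic order.

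Under $s$, positions are reversed, so the cyclic rank sequence is read backwards, and each rank is negated. Suppose $s\cdot\mathbf w$ had the forbidden pattern, i.e.\ cyclic rank sequence $h,h-1,\dots$. Undoing the negation gives $-h,-h+1,\dots$. Undoing the reversal then gives the descending sequence $-h+\lambda_i-1,\dots,-h$, which is forbidden for $\mathbf w$. So $s$ maps forbidden words to forbidden words; since $s$ is an involution, the same holds in the other direction, and $s$ preserves $\mathcal W(\mathcal P)$.

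The one delicate point is the claim that the forbidden condition is invariant under rotating where we start reading. It follows because the family of sequences in~\eqref{eq:forbidden-unoriented-ranks} is closed under cyclic shift, as $h$ ranges over all of $\mathbb Z_{\lambda_i}$. It can also be seen via the decoding: the forbidden sequences are exactly those whose decoded cycle is the $\beta^{-1}$-order on its support.
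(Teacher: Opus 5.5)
Your proof is correct and follows the same route as the paper. The dihedral relations come from direct index substitution, and content and forbidden-pattern avoidance are preserved because rotation only shifts the starting point of each cyclic rank order, while the twisted reflection reverses that order and negates ranks, sending descending runs to descending runs; your version spells out the closure of the forbidden family under cyclic shift and the reversal-plus-negation computation, which the paper leaves implicit (and $r_1^N=r_N=r_0=\operatorname{id}$ follows at once from your first two identities).
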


\begin{proof}
  Rotation changes the starting point of each cyclic occurrence order, while
  twisted reflection reverses both the position and rank orders. Hence both
  preserve the prescribed content and avoidance of the forbidden order in
  Equation~\eqref{eq:forbidden-unoriented-ranks}. Direct substitution gives
  $r_\nu\cdot(r_\xi\cdot\mathbf w)
  =(w_{j+\nu+\xi})_j=r_{\nu+\xi}\cdot\mathbf w$,
  $r_0\cdot\mathbf w=(w_j)_j=\mathbf w$, and
  $s\cdot(s\cdot\mathbf w)=(\theta^2(w_j))_j=\mathbf w$.
  It also gives $s\cdot\bigl(r_\nu\cdot(s\cdot\mathbf w)\bigr)
  =(w_{j-\nu})_j=r_{-\nu}\cdot\mathbf w$.
  The first two identities give $r_1^N=r_0=\operatorname{id}$, and the
  remaining identities give the other defining relations of $D_N$.
\end{proof}

\begin{proposition}[Auxiliary--dihedral action]
  \label{prop:auxiliary-dihedral-action}
  The action of $D_N$ on $\mathcal W(\mathcal P)$ normalizes
  $\mathcal A(\mathcal P)$. The corresponding semidirect product has
  underlying set $\mathcal A(\mathcal P)\times D_N$ and multiplication
  $(a,g)(a',g'):=\bigl(a(a')^g,gg'\bigr)$. Define
  $\mathcal G(\mathcal P):=\mathcal A(\mathcal P)\rtimes D_N$ with this
  multiplication. It acts on $\mathcal W(\mathcal P)$ by
  $(a,g)\cdot\mathbf w:=a\cdot(g\cdot\mathbf w)$. Equivalently, $D_N$ acts on
  $\mathcal W(\mathcal P)/\mathcal A(\mathcal P)$.
\end{proposition}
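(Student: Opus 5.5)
The plan is to realize everything inside the symmetric group on the finite set $\mathcal W(\mathcal P)$ and check the normalization relation on generators. Both $\mathcal A(\mathcal P)$ and $D_N$ act on $\mathcal W(\mathcal P)$: the former by Proposition~\ref{prop:fixed-content-encoding}'s setup, the latter by Proposition~\ref{prop:twisted-dihedral-action}. The key observation is that every $a\in\mathcal A(\mathcal P)$ acts letterwise. It applies a fixed symbol map $\sigma_a$ to each position, $(a\cdot\mathbf w)_j=\sigma_a(w_j)$. The generators of $D_N$, by contrast, act by a position permutation, possibly composed letterwise with $\theta$.

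First I would handle rotations. Since $r_\nu$ only moves positions, $(r_\nu a r_\nu^{-1}\cdot\mathbf w)_j=\sigma_a(w_j)$, so $r_\nu$ commutes with every $a$. For the reflection, $(s a s\cdot\mathbf w)_j=\theta\sigma_a\theta(w_j)$. I will then compute $\theta\sigma_a\theta$ for $a=(\varphi,(h_i))$. On unranked symbols it is just $c_i\mapsto c_{\varphi(i)}$. On ranked symbols,
\[
  (c_i,k)\mapsto(c_i,-k)\mapsto(c_{\varphi(i)},-k+h_i)\mapsto(c_{\varphi(i)},k-h_i).
\]
Because $\varphi$ permutes only colors within one annotated part, $\lambda_{\varphi(i)}=\lambda_i$, so these reductions are consistent. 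Hence $sas=a^s$ with $a^s:=(\varphi,(-h_i))\in\mathcal A(\mathcal P)$. Since $D_N$ is generated by $r_1$ and $s$, conjugation by every $g\in D_N$ maps $\mathcal A(\mathcal P)$ into itself, giving an automorphism $a\mapsto a^g:=gag^{-1}$. This settles the normalization claim.

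Next, the semidirect product. Given the automorphism action $g\mapsto(a\mapsto a^g)$, which is a homomorphism $D_N\to\mathrm{Aut}(\mathcal A(\mathcal P))$ because it is the restriction of conjugation, the stated multiplication is the standard one. It suffices to check that $(a,g)\mapsto a\circ g$, as permutations of $\mathcal W(\mathcal P)$, respects it:
\[
  (a g)(a' g')=a(g a' g^{-1})gg'=a(a')^g\,gg'.
\]
This shows that $(a,g)\cdot\mathbf w=a\cdot(g\cdot\mathbf w)$ defines an action, and the identity $(\mathrm{id},\mathrm{id})$ acts trivially. The image of this map is the subgroup generated by $\mathcal A(\mathcal P)$ and $D_N$. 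We do not need it to be injective, since the statement only concerns the action of the abstract semidirect product.

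Finally, for the quotient statement: if $\mathbf w'=a\cdot\mathbf w$, then $g\cdot\mathbf w'=a^g\cdot(g\cdot\mathbf w)$, so $g$ sends $\mathcal A(\mathcal P)$-orbits to $\mathcal A(\mathcal P)$-orbits. The group relations then descend from those on $\mathcal W(\mathcal P)$. The only delicate step is the reflection conjugation, where one must verify that $\theta$ inverts rank shifts and that color permutations preserve $\lambda_i$; everything else is formal.
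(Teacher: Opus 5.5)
Your proposal is correct and follows essentially the same route as the paper. As in the paper, you show that rotations commute with every auxiliary element because auxiliaries act letterwise, compute that conjugation by $s$ sends $(\varphi,(h_i))$ to $(\varphi,(-h_i))$, extend to all of $D_N$ via generators, and derive both the semidirect multiplication and the induced action on $\mathcal W(\mathcal P)/\mathcal A(\mathcal P)$ from the identity $g a'=(a')^g g$; your remarks that $\lambda_{\varphi(i)}=\lambda_i$ keeps the rank reductions consistent and that $g\mapsto(a\mapsto a^g)$ is a homomorphism into $\mathrm{Aut}(\mathcal A(\mathcal P))$ only make explicit what the paper leaves implicit.
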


\begin{proof}
  Let $a\in\mathcal A(\mathcal P)$ have color permutation $\varphi$ and
  oriented rank shifts $h_i$. Rotations commute with $a$ because $a$ changes
  symbols without changing their positions, so $r_\nu a r_{-\nu}=a$.
  Because $s^{-1}=s$, conjugation by $s$ acts on an oriented symbol as
  $(c_i,k)\mapsto(c_i,-k)\mapsto
  (c_{\varphi(i)},-k+h_i)\mapsto(c_{\varphi(i)},k-h_i)$.
  On an unranked symbol, it only applies $\varphi$. Thus $sas^{-1}$ has color
  permutation $\varphi$ and rank shifts $-h_i$. Since the rotations and $s$
  generate $D_N$, we have $a^g=gag^{-1}\in\mathcal A(\mathcal P)$ for every
  $g\in D_N$, so conjugation defines an action of $D_N$ on
  $\mathcal A(\mathcal P)$. The identity $ga'=(a')^g g$ now gives
  $aga'g'=a(a')^g gg'$, which is the stated multiplication rule.
  It also gives $g\cdot(a'\cdot\mathbf w)
  =(a')^g\cdot(g\cdot\mathbf w)$. Hence $D_N$ acts on the auxiliary
  orbits, and
  $(a,g)\cdot\bigl((a',g')\cdot\mathbf w\bigr)
  =\bigl((a,g)(a',g')\bigr)\cdot\mathbf w$, as required.
\end{proof}

\begin{proposition}[Decoder equivariance]
  \label{prop:decoder-equivariance}
  The assignments $r_\nu\cdot\omega:=\omega^{\beta^{-\nu}}$ and
  $s\cdot\omega:=(\omega^{-1})^{\Reflect}$ for $\nu\in\mathbb Z_N$ define
  an action of $D_N$ on the fixed-point-free permutations of annotated
  cycle type $\mathcal P$ relative to $\beta$. Under this action, the
  bijection of Proposition~\ref{prop:fixed-content-encoding} from the
  $\mathcal A(\mathcal P)$-orbits of encoding words is $D_N$-equivariant.
  Equivalently,
  for every $\mathbf w\in\mathcal W(\mathcal P)$ and
  $\nu\in\mathbb Z_N$, we have
  $\omega_{r_\nu\cdot\mathbf w}=\omega_{\mathbf w}^{\beta^{-\nu}}$ and
  $\omega_{s\cdot\mathbf w}=(\omega_{\mathbf w}^{-1})^{\Reflect}$.
\end{proposition}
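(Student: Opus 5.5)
The plan is to prove the two word-level identities
$\omega_{r_\nu\cdot\mathbf w}=\omega_{\mathbf w}^{\beta^{-\nu}}$ and
$\omega_{s\cdot\mathbf w}=(\omega_{\mathbf w}^{-1})^{\Reflect}$ first, cycle by cycle, and then deduce everything else from them. This works because both sides are products of disjoint cycles indexed by the colors. It therefore suffices to show, for each color $c_i$, that $\Cycle_i(r_\nu\cdot\mathbf w)=\Cycle_i(\mathbf w)^{\beta^{-\nu}}$ and $\Cycle_i(s\cdot\mathbf w)=(\Cycle_i(\mathbf w)^{-1})^{\Reflect}$. Conjugation and inversion distribute over a product of disjoint commuting cycles, so these per-color identities give the full ones.

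\emph{Rotation.} Since $(r_\nu\cdot\mathbf w)_j=w_{j+\nu}$, the symbol at position $p$ of $\mathbf w$ sits at position $p-\nu=\beta^{-\nu}(p)$ of $r_\nu\cdot\mathbf w$, with its color and rank unchanged.
\begin{itemize}
\item For an oriented color, the position carrying $(c_i,k)$ becomes $\beta^{-\nu}(p_{i,k})$. Hence the new cycle is $(\beta^{-\nu}(p_{i,0})\ \cdots\ \beta^{-\nu}(p_{i,\lambda_i-1}))$, which is exactly the renaming of $\Cycle_i(\mathbf w)$ by $\beta^{-\nu}$.
\item For an unoriented color, the support is translated by $-\nu$. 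Translation $x\mapsto x-\nu$ is conjugation by a power of $\beta$, so it preserves the cyclic order induced by $\beta^{-1}$. Hence the decoded cycle is again the $\beta^{-1}$-ordered cycle on the translated support, i.e.\ $\Cycle_i(\mathbf w)^{\beta^{-\nu}}$.
\end{itemize}

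\emph{Reflection.} Since $(s\cdot\mathbf w)_j=\theta(w_{-j})$, the symbol at position $p$ moves to $\Reflect(p)=-p$, and ranks are negated.
\begin{itemize}
\item For an oriented color, the position of $(c_i,k)$ in $s\cdot\mathbf w$ is $\Reflect(p_{i,-k})$. The new cycle is therefore $(\Reflect(p_{i,0})\ \Reflect(p_{i,-1})\ \Reflect(p_{i,-2})\ \cdots)$, which is $\Reflect\,\Cycle_i(\mathbf w)^{-1}\Reflect=(\Cycle_i(\mathbf w)^{-1})^{\Reflect}$.
\item For an unoriented color with support $S$, the cycle $\Cycle_i(\mathbf w)^{-1}$ is the $\beta$-ordered cycle on $S$. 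Because $\Reflect\beta\Reflect=\beta^{-1}$, conjugating by $\Reflect$ turns it into the $\beta^{-1}$-ordered cycle on $\Reflect(S)$. This is exactly what the decoder produces for $s\cdot\mathbf w$, whose support for $c_i$ is $\Reflect(S)$.
\end{itemize}

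Next I would deduce the action statement. Proposition~\ref{prop:fixed-content-encoding} makes the decoder surjective onto fixed-point-free permutations of annotated type $\mathcal P$. Proposition~\ref{prop:twisted-dihedral-action} shows that $r_\nu\cdot\mathbf w$ and $s\cdot\mathbf w$ lie in $\mathcal W(\mathcal P)$. So the identities show that the maps $\omega\mapsto\omega^{\beta^{-\nu}}$ and $\omega\mapsto(\omega^{-1})^{\Reflect}$ send this set into itself; in particular they preserve cycle type and orientation annotations.

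The $D_N$ relations then follow by direct computation, exactly as in Proposition~\ref{prop:extended-toric-action}:
\begin{itemize}
\item conjugations by powers of $\beta$ compose additively;
\item $\bigl(((\omega^{-1})^{\Reflect})^{-1}\bigr)^{\Reflect}=\omega^{\Reflect^2}=\omega$;
\item $s r_\nu s=r_{-\nu}$ follows from $\Reflect\beta^{-\nu}\Reflect=\beta^{\nu}$.
\end{itemize}
Alternatively, the relations can be transported from the word action through the surjective decoder. Finally, the decoder is constant on $\mathcal A(\mathcal P)$-orbits, and $D_N$ acts on those orbits by Proposition~\ref{prop:auxiliary-dihedral-action}. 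Hence the word identities are precisely $D_N$-equivariance of the induced bijection.

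The main obstacle is sign and direction bookkeeping. One must check that rotating positions by $+\nu$ corresponds to conjugation by $\beta^{-\nu}$ rather than $\beta^{\nu}$. One must also check that the rank negation by $\theta$ combines with the position reflection to give $\Cycle^{-1}$ conjugated by $\Reflect$, and not $\Cycle$ itself. I would verify both against the two-color example, where both words decode to $(0\ 2\ 4)(1\ 3\ 5)$.
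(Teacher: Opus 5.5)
Your proposal is correct and follows essentially the same route as the paper. Both prove the per-color identities for rotation and twisted reflection: oriented colors through rank negation, and unoriented colors through $\Reflect\beta\Reflect=\beta^{-1}$ on the reflected support. Both then take products over disjoint supports, check the $D_N$ relations from $\Reflect^2=\iota$ and $\Reflect\beta^{\nu}=\beta^{-\nu}\Reflect$, and intertwine the generators on $\mathcal A(\mathcal P)$-orbits; your surjectivity argument spells out the preservation of annotated type that the paper states in one line.

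One minor remark: your proposed sanity check cannot catch a sign error in the rotation direction, because $(0\ 2\ 4)(1\ 3\ 5)=\beta^2$ commutes with $\beta$. Examples~\ref{ex:rotation} and~\ref{ex:twisted-reflection} are better tests.
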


\begin{proof}
  Let $\Cycle_i(\mathbf w)$ denote the cycle decoded from color $c_i$.
  Position rotation replaces both the support and the defining cyclic writing
  of this cycle by their images under $\beta^{-\nu}$. Hence
  $\Cycle_i(r_\nu\cdot\mathbf w)=
  \Cycle_i(\mathbf w)^{\beta^{-\nu}}$ for every $i$.
  If $c_i$ is oriented and
  $\Cycle_i(\mathbf w)=(p_0\ p_1\ \dots\ p_{m-1})$, then the rank inversion
  in the twisted reflection gives
  $\Cycle_i(s\cdot\mathbf w)=(-p_0\ -p_{m-1}\ \dots\ -p_1)
  =(\Cycle_i(\mathbf w)^{-1})^{\Reflect}$.
  If $c_i$ is unoriented, $\Cycle_i(\mathbf w)$ is the restriction of the
  cyclic order $\beta^{-1}$ to its support. Its inverse has the restricted
  order $\beta$, and conjugation by $\Reflect$ changes this to the restricted
  order $\beta^{-1}$ on the reflected support. Thus the same equality holds
  for unoriented colors. Taking products over the disjoint supports yields
  the asserted identities. It also shows that both operations preserve
  the annotated cycle type.

  Since $\Reflect^2=\iota$ and
  $\Reflect\beta^\nu=\beta^{-\nu}\Reflect$, these operations satisfy
  $r_\nu r_\xi=r_{\nu+\xi}$, $s^2=\operatorname{id}$, and
  $sr_\nu s=r_{-\nu}$. They therefore define the asserted action. Finally,
  Proposition~\ref{prop:fixed-content-encoding} makes the decoder constant
  on $\mathcal A(\mathcal P)$-orbits, and the two identities show that the
  induced bijection intertwines the generators $r_1$ and $s$.
\end{proof}

\begin{corollary}[Realizability invariance]
  \label{cor:realizability-invariance}
  For every $a\in\mathcal A(\mathcal P)$,
  $\mathbf w\in\mathcal W(\mathcal P)$, and $\nu\in\mathbb Z_N$, we have
  $\rho_{a\cdot\mathbf w}=\rho_{\mathbf w}$,
  $\rho_{r_\nu\cdot\mathbf w}=\rho_{\mathbf w}^{\beta^{-\nu}}$, and
  $(\rho_{s\cdot\mathbf w})^\beta
  =(\rho_{\mathbf w}^{-1})^{\Reflect}$.
  Consequently, $\mathcal V(\mathcal P)$ is invariant under
  $\mathcal G(\mathcal P)$, and
  $\mathcal V(\mathcal P)/\mathcal A(\mathcal P)$ is invariant under $D_N$.
\end{corollary}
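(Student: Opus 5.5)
We derive all three identities from Proposition~\ref{prop:fixed-content-encoding} and Proposition~\ref{prop:decoder-equivariance}, using $\rho_{\mathbf w}=\omega_{\mathbf w}\beta$ together with the facts that $\beta$ commutes with its own powers and that $\Reflect$ inverts $\beta$.

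The first identity needs only the first of these results. Proposition~\ref{prop:fixed-content-encoding} says the decoder is constant on $\mathcal A(\mathcal P)$-orbits, so $\omega_{a\cdot\mathbf w}=\omega_{\mathbf w}$. Right-multiplying by $\beta$ gives $\rho_{a\cdot\mathbf w}=\rho_{\mathbf w}$.

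For rotation, Proposition~\ref{prop:decoder-equivariance} gives $\omega_{r_\nu\cdot\mathbf w}=\omega_{\mathbf w}^{\beta^{-\nu}}$. Since $\beta^{\beta^{-\nu}}=\beta$, and conjugation is multiplicative,
\[
\rho_{r_\nu\cdot\mathbf w}=\omega_{\mathbf w}^{\beta^{-\nu}}\beta^{\beta^{-\nu}}=(\omega_{\mathbf w}\beta)^{\beta^{-\nu}}=\rho_{\mathbf w}^{\beta^{-\nu}}.
\]

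The reflection identity is the only step with real bookkeeping, and it is where I expect the main care to be needed. Proposition~\ref{prop:decoder-equivariance} gives $\omega_{s\cdot\mathbf w}=(\omega_{\mathbf w}^{-1})^{\Reflect}$. Because $\Reflect\beta\Reflect=\beta^{-1}$ and $\Reflect=\Reflect^{-1}$, we have $\beta=(\beta^{-1})^{\Reflect}$. Therefore
\[
\rho_{s\cdot\mathbf w}=(\omega_{\mathbf w}^{-1})^{\Reflect}(\beta^{-1})^{\Reflect}=(\omega_{\mathbf w}^{-1}\beta^{-1})^{\Reflect}.
\]
Conjugating by $\beta$ and using $\beta\Reflect=\Reflect\beta^{-1}$, we obtain
\[
(\rho_{s\cdot\mathbf w})^{\beta}=\Reflect\,\beta^{-1}\omega_{\mathbf w}^{-1}\beta^{-1}\beta\,\Reflect=\Reflect\,\beta^{-1}\omega_{\mathbf w}^{-1}\,\Reflect=\bigl((\omega_{\mathbf w}\beta)^{-1}\bigr)^{\Reflect}=(\rho_{\mathbf w}^{-1})^{\Reflect}.
\]
When writing this out I would check the order of factors in each conjugation carefully, since sign slips are easy here.

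For the consequences, note that conjugation and inversion preserve cycle type. Each identity therefore shows that $\rho_{g\cdot\mathbf w}$ is an $N$-cycle exactly when $\rho_{\mathbf w}$ is, for $g=a$, $r_\nu$ or $s$. Proposition~\ref{prop:twisted-dihedral-action} shows these operations keep words in $\mathcal W(\mathcal P)$, so each one maps $\mathcal V(\mathcal P)$ into itself. Since $\mathcal G(\mathcal P)$ is generated by $\mathcal A(\mathcal P)$, $r_1$ and $s$, the set $\mathcal V(\mathcal P)$ is $\mathcal G(\mathcal P)$-invariant. It is then a union of $\mathcal A(\mathcal P)$-orbits. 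Combined with the induced $D_N$-action on $\mathcal W(\mathcal P)/\mathcal A(\mathcal P)$ from Proposition~\ref{prop:auxiliary-dihedral-action}, this makes $\mathcal V(\mathcal P)/\mathcal A(\mathcal P)$ a $D_N$-invariant subset.
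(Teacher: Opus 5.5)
Your proof is correct and follows the paper's argument. The first identity comes from constancy of the decoder on auxiliary orbits, and the rotation identity from $\beta^{-\nu}$ centralizing $\beta$. The reflection identity comes from decoder equivariance together with $\Reflect\beta\Reflect=\beta^{-1}$, and invariance follows because conjugation and inversion preserve cycle type. The only difference is cosmetic: you compute $(\rho_{s\cdot\mathbf w})^\beta$ directly, whereas the paper expands $(\rho_{\mathbf w}^{-1})^{\Reflect}$ and recognizes it as $\beta\rho_{s\cdot\mathbf w}\beta^{-1}$.
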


\begin{proof}
  The auxiliary equality follows from
  $\omega_{a\cdot\mathbf w}=\omega_{\mathbf w}$. Since $\beta^{-\nu}$
  centralizes $\beta$, rotation conjugates
  $\rho_{\mathbf w}=\omega_{\mathbf w}\beta$.  Also
  \[
    (\rho_{\mathbf w}^{-1})^{\Reflect}
    =(\beta^{-1}\omega_{\mathbf w}^{-1})^{\Reflect}
    =\beta(\omega_{\mathbf w}^{-1})^{\Reflect}
    =\beta\rho_{s\cdot\mathbf w}\beta^{-1}.
  \]
  Conjugation and inversion preserve cycle type, so the target is an
  $N$-cycle before either transformation if and only if it is an $N$-cycle
  afterward.
\end{proof}

A \emph{twisted bracelet of type $\mathcal P$} is an orbit of
$\mathcal G(\mathcal P)$ on $\mathcal W(\mathcal P)$. Equivalently, it is a
$D_N$-orbit on
$\mathcal W(\mathcal P)/\mathcal A(\mathcal P)$. It is \emph{realizable}
when its words belong to $\mathcal V(\mathcal P)$, which is well-defined by
Corollary~\ref{cor:realizability-invariance}.
A \emph{candidate bracelet} is a twisted bracelet considered before the
realizability condition is imposed.

\begin{remark}[Algebraic permutations and cycle graphs]
  Silva et al.~\cite[Appendix~2]{silva2022new} showed that the cycles of the
  algebraic permutation $\spi$ are in a length-preserving bijection with the
  alternating cycles of the cycle graph $G(\pi)$.  For a cyclic-target pair,
  Proposition~\ref{prop:ordinary-target-renaming} first renames the target
  to $\eiota$; deleting the anchor from the resulting current cycle then
  gives an ordinary permutation $\pi$ whose cycle graph is $G(\pi)$.
\end{remark}

\begin{theorem}[Bracelet--extended-toric-class correspondence]
  \label{thm:bracelet-class-correspondence}
  The map $\mathbf w\mapsto(\omega_{\mathbf w},\beta)$ induces a bijection
  from realizable
  twisted bracelets of type $\mathcal P$ to the extended-toric
  equivalence classes of cyclic-target pairs of annotated type
  $\mathcal P$.
\end{theorem}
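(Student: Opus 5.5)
The map factors through two earlier results, so the plan is to chain them. By Proposition~\ref{prop:fixed-content-encoding}, $\mathcal W(\mathcal P)/\mathcal A(\mathcal P)$ is in bijection with the fixed-point-free permutations $\omega$ of annotated type $\mathcal P$ relative to the fixed current cycle $\beta=(0\ 1\ \cdots\ N-1)$. Restricting to realizable words, Corollary~\ref{cor:realizability-invariance} identifies $\mathcal V(\mathcal P)/\mathcal A(\mathcal P)$ with the set $\Omega$ of such $\omega$ for which $\omega\beta$ is an $N$-cycle, that is, with cyclic-target pairs $(\omega,\beta)$ of type $\mathcal P$ with $\beta$ fixed. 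By Proposition~\ref{prop:decoder-equivariance}, this bijection is $D_N$-equivariant for the action $r_\nu\cdot\omega=\omega^{\beta^{-\nu}}$, $s\cdot\omega=(\omega^{-1})^{\Reflect}$. Hence realizable twisted bracelets correspond bijectively to $D_N$-orbits on $\Omega$. It therefore remains to show that two elements $\omega,\omega'\in\Omega$ lie in the same $D_N$-orbit if and only if $(\omega,\beta)$ and $(\omega',\beta)$ are extended-torically equivalent. We also need every cyclic-target pair of type $\mathcal P$ to be equivalent to one with current cycle $\beta$.

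For the surjectivity part, I would take an arbitrary pair $(\omega_0,\beta_0)$ and choose $\upsilon_0$ with $\beta_0^{\upsilon_0}=\beta$. Renaming preserves relative orientation and applicability, by the argument of Proposition~\ref{prop:ordinary-target-renaming}. I would need to check that this renaming preserves the associated extended-toric class. The class is defined via a renaming of the target to $\eiota$, so a composite of renamings gives the same class by the well-definedness argument at the end of Section~\ref{sec:cyclic-targets}. Thus every class meets $\Omega$.

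For the main step, I fix $\omega\in\Omega$ with $\rho=\omega\beta$ and choose $\upsilon$ with $\rho^\upsilon=\eiota$; the class is $\bigl(\beta^{\upsilon}\bigr)^\circ_{\mathrm{ext}}$. I then compute what each generator does to this class.

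\emph{Rotation.} For $\omega'=\omega^{\beta^{-\nu}}$, Corollary~\ref{cor:realizability-invariance} gives $\rho'=\rho^{\beta^{-\nu}}$. The renaming $\upsilon\beta^{\nu}$ sends $\rho'$ to $\eiota$ and sends $\beta$ to $\beta^{\upsilon}$. So rotations leave the associated current cycle, hence the class, unchanged.

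\emph{Reflection.} For $\omega'=(\omega^{-1})^{\Reflect}$, the corollary gives $\rho'=\beta^{-1}(\rho^{-1})^{\Reflect}\beta$. I would use the renaming $\eta=\Reflect\upsilon\Reflect\beta$, adjusted as needed, so that $\rho'^{\eta}=(\eiota^{-1})^{\Reflect}=\eiota$. Then $\beta^{\eta}$ should come out as $F(\beta^\upsilon)$ up to a rotation $R_\nu$. The identity $F(\epi)=(\epi^{-1})^{\Reflect}$ and the relation $\Reflect\beta^\nu=\beta^{-\nu}\Reflect$ are the tools; the bookkeeping of which conjugator goes where is the fiddly part. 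So the $D_N$-orbit of $\omega$ maps into a single class.

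For the converse, suppose $(\omega,\beta)$ and $(\omega',\beta)$ have the same class. After applying an $s$ if necessary, using the computation above, I may assume $\beta^{\upsilon'}=R_\mu(\beta^\upsilon)=\beta^{\upsilon\eiota^\mu}$ for some $\mu$, with both targets renamed to $\eiota$. Then $\upsilon'^{-1}\upsilon\eiota^\mu$ centralizes $\beta$, so it equals $\beta^k$ for some $k$. Rearranging shows that $\omega'=\omega^{\beta^{-k}}$ up to conjugation order. Care is needed with the convention $\delta^\epsilon=\epsilon\delta\epsilon^{-1}$, but the conclusion is that $\omega'$ is a rotation of $\omega$.

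I expect the main obstacle to be the reflection step: matching the word-side operation $(\omega^{-1})^{\Reflect}$, which reflects about $\beta$, with the class-side operation $F$, which reflects about the renamed target $\eiota$, while tracking the extra conjugation by $\beta$ in Corollary~\ref{cor:realizability-invariance}. I would first verify it on the $(2_{\mathrm u}^2)$ example to pin down signs.
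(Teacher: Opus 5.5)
Your proposal is correct and follows essentially the same route as the paper: auxiliary invariance and decoder equivariance reduce the question to $D_N$-orbits on pairs with current cycle $\beta$, rotation and twisted reflection are shown to realize $R_\nu$ and $F$ after the target is renamed to $\eiota$, and the converse uses a common renaming to current cycle $\beta$ together with $C_{S_N}(\beta)=\langle\beta\rangle$ to recover a position rotation. For the bookkeeping you flagged, your renaming $\eta=\Reflect\upsilon\Reflect\beta$ needs no adjustment, since $(\rho')^\eta=\eiota$ and $\beta^\eta=F(\beta^\upsilon)$ exactly (using $\beta^{\Reflect}=\beta^{-1}$); also, with the convention $\delta^\epsilon=\epsilon\delta\epsilon^{-1}$ one has $R_\mu(\beta^\upsilon)=\beta^{\eiota^\mu\upsilon}$, so the centralizing element in your converse is $\upsilon^{-1}\eiota^{-\mu}\upsilon'$.
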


\begin{proof}
  Proposition~\ref{prop:fixed-content-encoding} shows that auxiliary
  transformations do not change the reconstructed cyclic-target pair.
  Position rotation conjugates that pair by a
  power of $\beta$ and therefore realizes toric rotation. For a reflected
  rotation with index $\nu\in\mathbb Z_N$, the renaming is
  $\beta^\nu\Reflect$, which sends $x$ to
  $\nu-x\pmod N$.
  Proposition~\ref{prop:decoder-equivariance} and
  Corollary~\ref{cor:realizability-invariance} give
  $\omega'=(\omega_{\mathbf w}^{-1})^{\beta^\nu\Reflect}$, and its target
  $\rho'$ satisfies
  $(\rho')^\beta=(\rho_{\mathbf w}^{-1})^{\beta^\nu\Reflect}$.
  If $\rho_{\mathbf w}^\upsilon=\eiota$ and
  $v=\beta^\nu\Reflect\upsilon^{-1}\Reflect$, then direct substitution gives
  $F(\beta^\upsilon)^v=\beta$ and
  $\eiota^v=(\rho_{\mathbf w}^{-1})^{\beta^\nu\Reflect}$.
  Thus the transformed word realizes extended-toric reflection after
  the target is renamed to $\eiota$.  Any two choices of such a renaming
  differ only by a toric rotation.

  Conversely, apply a common renaming to a cyclic-target pair so that its
  current cycle is $\beta$, and use
  Proposition~\ref{prop:fixed-content-encoding} to obtain its unique
  auxiliary orbit of encoding words. Two such renamings differ by
  $C_{S_N}(\beta)=\langle\beta\rangle$, giving a position rotation.
  The auxiliary action gives exactly the remaining choices of equal-part
  colors and oriented rank origins. The reflected transformations established
  above give exactly the remaining identifications.
\end{proof}

\begin{example}[Rotation]
  \label{ex:rotation}
  Let $\mathcal P=(3_{\mathrm{u}}^3)$ and
  $\beta=(0\ 1\ \cdots\ 8)$. Write $A$, $B$, and $C$ for the three
  unoriented colors, and consider the encoding word
  $\mathbf w=ABACBCACB$. One position rotation gives
  $r_1\cdot\mathbf w=BACBCACBA$.
  Direct decoding gives
  $\omega_{\mathbf w}=(0\ 6\ 2)(1\ 8\ 4)(3\ 7\ 5)$ and
  $\omega_{r_1\cdot\mathbf w}=(0\ 7\ 3)(1\ 8\ 5)(2\ 6\ 4)
  =\omega_{\mathbf w}^{\beta^{-1}}$.
  The targets $\rho_{\mathbf w}=(0\ 8\ 6\ 5\ 2\ 7\ 4\ 3\ 1)$ and
  $\rho_{r_1\cdot\mathbf w}=(0\ 8\ 7\ 5\ 4\ 1\ 6\ 3\ 2)$ are both
  $9$-cycles. Thus both words are realizable and, since $r_1\in D_N$ acts
  through $\mathcal G(\mathcal P)$, they lie in the same twisted bracelet.
\end{example}

Figure~\ref{fig:rotation-bracelet} depicts the rotation directly on circular
representatives of the two words.

\begin{figure}[H]
  \centering
  \begin{tikzpicture}[
      braceletbead/.style={circle,minimum size=8mm,inner sep=0pt,
        text=white,font=\small,line width=0.5pt,scale=0.80}
    ]
    \begin{scope}[shift={(-3.2,0)}]
      \draw[gray!70,dashed,line width=0.5pt] (0,0) -- (0,1.75);
      \draw[gray!55,line width=0.5pt] (0,0) circle (1.4cm);
      \foreach \angle/\beadtext/\beadcolor in
        {90/{A}/cyclefive,50/{B}/cyclethree,10/{A}/cyclefive,
         -30/{C}/cyclethird,-70/{B}/cyclethree,-110/{C}/cyclethird,
         -150/{A}/cyclefive,-190/{C}/cyclethird,-230/{B}/cyclethree} {
        \node[braceletbead,fill=\beadcolor,draw=\beadcolor!65!black]
          at (\angle:1.4cm) {$\beadtext$};
      }
      \node[font=\small] at (0,-1.95) {$\mathbf w$};
    \end{scope}

    \begin{scope}[shift={(3.2,0)}]
      \draw[gray!70,dashed,line width=0.5pt] (0,0) -- (0,1.75);
      \draw[gray!55,line width=0.5pt] (0,0) circle (1.4cm);
      \foreach \angle/\beadtext/\beadcolor in
        {90/{B}/cyclethree,50/{A}/cyclefive,10/{C}/cyclethird,
         -30/{B}/cyclethree,-70/{C}/cyclethird,-110/{A}/cyclefive,
         -150/{C}/cyclethird,-190/{B}/cyclethree,-230/{A}/cyclefive} {
        \node[braceletbead,fill=\beadcolor,draw=\beadcolor!65!black]
          at (\angle:1.4cm) {$\beadtext$};
      }
      \node[font=\small] at (0,-1.95) {$r_1\cdot\mathbf w$};
    \end{scope}

    \draw[gray!70,->,line width=0.6pt] (-1.45,0.55) --
      node[above,font=\small] {$r_1$} (1.45,0.55);
    \node[fill=white,inner sep=2pt,font=\small] at (0,-0.3)
      {$j\mapsto j-1\pmod 9$};
  \end{tikzpicture}
  \caption{Two representatives of the same twisted bracelet. Blue,
    vermillion, and bluish green encode $A$, $B$, and $C$, respectively.
    Rotation moves every bead one position counterclockwise. The dashed
    radius marks position $0$.}
  \label{fig:rotation-bracelet}
\end{figure}

Rename the symbols in $(\omega_{\mathbf w},\beta)$ and
$(\omega_{r_1\cdot\mathbf w},\beta)$ so that both targets become $\eiota$.
The respective renamings may be chosen as
$\upsilon=(0\ 8)(1\ 7\ 4\ 5\ 2\ 3\ 6)$ and
$\upsilon'=(0\ 8)(1\ 4\ 3\ 6\ 5\ 2\ 7)$. Indeed,
$\rho_{\mathbf w}^{\upsilon}
=\rho_{r_1\cdot\mathbf w}^{\upsilon'}=\eiota$, and the resulting current
cycles are $\epi=(0\ 8\ 7\ 3\ 6\ 5\ 2\ 1\ 4)$ and
$\epi'=(0\ 8\ 4\ 7\ 6\ 3\ 2\ 5\ 1)$. Moreover,
$\epi'=R_1(\epi)=\epi^{\eiota}$, so the two anchored cycles are torically
equivalent through one rotation. Deleting the anchor gives the ordinary
permutations $\pi=[8\ 7\ 3\ 6\ 5\ 2\ 1\ 4]$ and
$\pi'=[8\ 4\ 7\ 6\ 3\ 2\ 5\ 1]$.

\begin{figure}[H]
  \centering
  \begin{tikzpicture}[x=0.46cm,y=0.46cm,transform shape]
    \foreach \i/\lab in
      {0/{+0},1/{-8},2/{+8},3/{-7},4/{+7},5/{-3},6/{+3},7/{-6},
       8/{+6},9/{-5},10/{+5},11/{-2},12/{+2},13/{-1},14/{+1},
       15/{-4},16/{+4},17/{-9}} {
      \coordinate (v\i) at (\i,0);
      \node[below=2pt,font=\scriptsize] at (v\i) {$\lab$};
    }
    \foreach \i/\j in {0/1,4/5,12/13} {
      \draw[fivecycle,line width=2pt] (v\i) -- (v\j);
    }
    \foreach \i/\j in {2/3,8/9,16/17} {
      \draw[threecycle,line width=2pt] (v\i) -- (v\j);
    }
    \foreach \i/\j in {6/7,10/11,14/15} {
      \draw[thirdcycle,line width=2pt] (v\i) -- (v\j);
    }
    \draw[fivecycle,line width=0.8pt] (v0) .. controls +(0,3.5) and +(0,3.5) .. (v13);
    \draw[fivecycle,line width=0.8pt] (v12) .. controls +(0,2.0) and +(0,2.0) .. (v5);
    \draw[fivecycle,line width=0.8pt] (v4) .. controls +(0,1.2) and +(0,1.2) .. (v1);
    \draw[threecycle,line width=0.8pt] (v2) .. controls +(0,4.0) and +(0,4.0) .. (v17);
    \draw[threecycle,line width=0.8pt] (v16) .. controls +(0,2.0) and +(0,2.0) .. (v9);
    \draw[threecycle,line width=0.8pt] (v8) .. controls +(0,1.6) and +(0,1.6) .. (v3);
    \draw[thirdcycle,line width=0.8pt] (v6) .. controls +(0,2.6) and +(0,2.6) .. (v15);
    \draw[thirdcycle,line width=0.8pt] (v14) .. controls +(0,1.2) and +(0,1.2) .. (v11);
    \draw[thirdcycle,line width=0.8pt] (v10) .. controls +(0,1.2) and +(0,1.2) .. (v7);
    \node[anchor=west,font=\small] at (0,4.5)
      {$\text{(a)}\quad G([8\ 7\ 3\ 6\ 5\ 2\ 1\ 4])$};
  \end{tikzpicture}

  \smallskip

  \begin{tikzpicture}[x=0.46cm,y=0.46cm,transform shape]
    \foreach \i/\lab in
      {0/{+0},1/{-8},2/{+8},3/{-4},4/{+4},5/{-7},6/{+7},7/{-6},
       8/{+6},9/{-3},10/{+3},11/{-2},12/{+2},13/{-5},14/{+5},
       15/{-1},16/{+1},17/{-9}} {
      \coordinate (v\i) at (\i,0);
      \node[below=2pt,font=\scriptsize] at (v\i) {$\lab$};
    }
    \foreach \i/\j in {2/3,10/11,16/17} {
      \draw[fivecycle,line width=2pt] (v\i) -- (v\j);
    }
    \foreach \i/\j in {0/1,6/7,14/15} {
      \draw[threecycle,line width=2pt] (v\i) -- (v\j);
    }
    \foreach \i/\j in {4/5,8/9,12/13} {
      \draw[thirdcycle,line width=2pt] (v\i) -- (v\j);
    }
    \draw[fivecycle,line width=0.8pt] (v2) .. controls +(0,4.0) and +(0,4.0) .. (v17);
    \draw[fivecycle,line width=0.8pt] (v16) .. controls +(0,1.6) and +(0,1.6) .. (v11);
    \draw[fivecycle,line width=0.8pt] (v10) .. controls +(0,2.0) and +(0,2.0) .. (v3);
    \draw[threecycle,line width=0.8pt] (v0) .. controls +(0,3.5) and +(0,3.5) .. (v15);
    \draw[threecycle,line width=0.8pt] (v14) .. controls +(0,2.1) and +(0,2.1) .. (v7);
    \draw[threecycle,line width=0.8pt] (v6) .. controls +(0,1.6) and +(0,1.6) .. (v1);
    \draw[thirdcycle,line width=0.8pt] (v4) .. controls +(0,2.6) and +(0,2.6) .. (v13);
    \draw[thirdcycle,line width=0.8pt] (v12) .. controls +(0,1.2) and +(0,1.2) .. (v9);
    \draw[thirdcycle,line width=0.8pt] (v8) .. controls +(0,1.2) and +(0,1.2) .. (v5);
    \node[anchor=west,font=\small] at (0,4.5)
      {$\text{(b)}\quad G([8\ 4\ 7\ 6\ 3\ 2\ 5\ 1])$};
  \end{tikzpicture}
  \caption{Cycle graphs of the ordinary permutations obtained after
    renaming the targets to $\eiota$. The thick baseline and thin arched
    edges are the black and gray edges, respectively. The colors are expository. The toric
    rotation from panel~(a) to panel~(b) adds $1$ modulo $9$ to the
    underlying vertex labels.}
  \label{fig:rotation-cycle-graphs}
\end{figure}

\begin{example}[Twisted reflection]
  \label{ex:twisted-reflection}
  Let $\mathcal P=(5_{\mathrm{o}},3_{\mathrm{u}})$ and
  $\beta=(0\ 1\ \cdots\ 7)$.  Write $A$ for the oriented color and $B$
  for the unoriented color, and consider the encoding word
  $\mathbf w=B A_0 B A_2 A_4 B A_1 A_3$.
  Direct decoding gives
  $\omega_{\mathbf w}=(0\ 5\ 2)(1\ 6\ 3\ 7\ 4)$.
  Twisted reflection gives
  $s\cdot\mathbf w=B A_2 A_4 B A_1 A_3 B A_0$.
  Direct decoding therefore gives
  $\omega_{s\cdot\mathbf w}=(0\ 6\ 3)(1\ 5\ 2\ 7\ 4)
  =(\omega_{\mathbf w}^{-1})^{\Reflect}$,
  where $\Reflect(x)=-x\pmod 8$. Thus rank inversion makes decoding commute with
  reflection. The targets $\rho_{\mathbf w}=(0\ 6\ 4\ 2\ 7\ 5\ 3\ 1)$ and
  $\rho_{s\cdot\mathbf w}=(0\ 5\ 3\ 1\ 7\ 6\ 4\ 2)$ are both $8$-cycles, so
  both words are realizable. Since $s\in D_N$ acts through
  $\mathcal G(\mathcal P)$, they lie in the same twisted bracelet.
\end{example}

Figure~\ref{fig:twisted-reflection-bracelet} depicts the action directly on
circular representatives of the two words.

\begin{figure}[H]
  \centering
  \begin{tikzpicture}[
      braceletbead/.style={circle,minimum size=8mm,inner sep=0pt,
        text=white,font=\small,line width=0.5pt,scale=0.80}
    ]
    \begin{scope}[shift={(-3.2,0)}]
      \draw[gray!70,dashed,line width=0.5pt] (0,-1.7) -- (0,1.7);
      \draw[gray!55,line width=0.5pt] (0,0) circle (1.4cm);
      \foreach \angle/\beadtext/\beadcolor in
        {90/{B}/cyclethree,45/{A_0}/cyclefive,
         0/{B}/cyclethree,-45/{A_2}/cyclefive,
         -90/{A_4}/cyclefive,-135/{B}/cyclethree,
         -180/{A_1}/cyclefive,-225/{A_3}/cyclefive} {
        \node[braceletbead,fill=\beadcolor,draw=\beadcolor!65!black]
          at (\angle:1.4cm) {$\beadtext$};
      }
      \node[font=\small] at (0,-1.95) {$\mathbf w$};
    \end{scope}

    \begin{scope}[shift={(3.2,0)}]
      \draw[gray!70,dashed,line width=0.5pt] (0,-1.7) -- (0,1.7);
      \draw[gray!55,line width=0.5pt] (0,0) circle (1.4cm);
      \foreach \angle/\beadtext/\beadcolor in
        {90/{B}/cyclethree,45/{A_2}/cyclefive,
         0/{A_4}/cyclefive,-45/{B}/cyclethree,
         -90/{A_1}/cyclefive,-135/{A_3}/cyclefive,
         -180/{B}/cyclethree,-225/{A_0}/cyclefive} {
        \node[braceletbead,fill=\beadcolor,draw=\beadcolor!65!black]
          at (\angle:1.4cm) {$\beadtext$};
      }
      \node[font=\small] at (0,-1.95) {$s\cdot\mathbf w$};
    \end{scope}

    \draw[gray!70,->,line width=0.6pt] (-1.45,0.55) --
      node[above,font=\small] {$s$} (1.45,0.55);
    \node[fill=white,inner sep=2pt,font=\small,align=center] at (0,-0.3)
      {$j\mapsto-j\pmod 8$\\[-1pt]$k\mapsto-k\pmod 5$};
  \end{tikzpicture}
  \caption{Two representatives of the same twisted bracelet. Blue and
    vermillion encode $A$ and $B$; reflection reverses positions and inverts
    the $A$-ranks. The dashed diameter in each panel is the reflection axis through position $0$.}
  \label{fig:twisted-reflection-bracelet}
\end{figure}

Rename the symbols in $(\omega_{\mathbf w},\beta)$ and
$(\omega_{s\cdot\mathbf w},\beta)$ so that both targets become $\eiota$.
The respective renamings may be chosen as
$\upsilon=\cycle{0,4,6,5,1,3,2,7}$ and
$\upsilon'=\cycle{1,3,2,7,4,6,5}$, respectively.  Indeed,
$\rho_{\mathbf w}^{\upsilon}
=\rho_{s\cdot\mathbf w}^{\upsilon'}=\eiota$, and the resulting current
cycles are $\epi=(0\ 4\ 3\ 7\ 2\ 6\ 1\ 5)$ and
$\epi'=(0\ 3\ 7\ 2\ 6\ 1\ 5\ 4)$.
Moreover, $\epi'=F(\epi)$, so the two anchored cycles are
extended-torically equivalent through reflection.  Deleting the anchor gives
the ordinary permutations $\pi=[4\ 3\ 7\ 2\ 6\ 1\ 5]$ and
$\pi'=[3\ 7\ 2\ 6\ 1\ 5\ 4]$.

\begin{figure}[H]
  \centering
  \begin{tikzpicture}[x=0.52cm,y=0.50cm,transform shape]
    \foreach \i/\lab in
      {0/{+0},1/{-4},2/{+4},3/{-3},4/{+3},5/{-7},6/{+7},7/{-2},
       8/{+2},9/{-6},10/{+6},11/{-1},12/{+1},13/{-5},14/{+5},15/{-8}} {
      \coordinate (v\i) at (\i,0);
      \node[below=2pt,font=\scriptsize] at (v\i) {$\lab$};
    }
    \foreach \i/\j in {0/1,4/5,10/11} {
      \draw[threecycle,line width=2pt] (v\i) -- (v\j);
    }
    \foreach \i/\j in {2/3,6/7,8/9,12/13,14/15} {
      \draw[fivecycle,line width=2pt] (v\i) -- (v\j);
    }
    \draw[threecycle,line width=0.8pt] (v0) .. controls +(0,3.0) and +(0,3.0) .. (v11);
    \draw[threecycle,line width=0.8pt] (v4) .. controls +(0,1.2) and +(0,1.2) .. (v1);
    \draw[threecycle,line width=0.8pt] (v10) .. controls +(0,1.6) and +(0,1.6) .. (v5);
    \draw[fivecycle,line width=0.8pt] (v12) .. controls +(0,1.6) and +(0,1.6) .. (v7);
    \draw[fivecycle,line width=0.8pt] (v8) .. controls +(0,1.6) and +(0,1.6) .. (v3);
    \draw[fivecycle,line width=0.8pt] (v2) .. controls +(0,3.0) and +(0,3.0) .. (v13);
    \draw[fivecycle,line width=0.8pt] (v14) .. controls +(0,1.6) and +(0,1.6) .. (v9);
    \draw[fivecycle,line width=0.8pt] (v6) .. controls +(0,2.5) and +(0,2.5) .. (v15);
    \node[anchor=west,font=\small] at (0,4.0)
      {$\text{(a)}\quad G([4\ 3\ 7\ 2\ 6\ 1\ 5])$};
  \end{tikzpicture}

  \smallskip

  \begin{tikzpicture}[x=0.52cm,y=0.50cm,transform shape]
    \foreach \i/\lab in
      {0/{+0},1/{-3},2/{+3},3/{-7},4/{+7},5/{-2},6/{+2},7/{-6},
       8/{+6},9/{-1},10/{+1},11/{-5},12/{+5},13/{-4},14/{+4},15/{-8}} {
      \coordinate (v\i) at (\i,0);
      \node[below=2pt,font=\scriptsize] at (v\i) {$\lab$};
    }
    \foreach \i/\j in {4/5,10/11,14/15} {
      \draw[threecycle,line width=2pt] (v\i) -- (v\j);
    }
    \foreach \i/\j in {0/1,2/3,6/7,8/9,12/13} {
      \draw[fivecycle,line width=2pt] (v\i) -- (v\j);
    }
    \draw[threecycle,line width=0.8pt] (v10) .. controls +(0,1.6) and +(0,1.6) .. (v5);
    \draw[threecycle,line width=0.8pt] (v14) .. controls +(0,1.2) and +(0,1.2) .. (v11);
    \draw[threecycle,line width=0.8pt] (v4) .. controls +(0,3.0) and +(0,3.0) .. (v15);
    \draw[fivecycle,line width=0.8pt] (v0) .. controls +(0,2.5) and +(0,2.5) .. (v9);
    \draw[fivecycle,line width=0.8pt] (v6) .. controls +(0,1.6) and +(0,1.6) .. (v1);
    \draw[fivecycle,line width=0.8pt] (v2) .. controls +(0,3.0) and +(0,3.0) .. (v13);
    \draw[fivecycle,line width=0.8pt] (v12) .. controls +(0,1.6) and +(0,1.6) .. (v7);
    \draw[fivecycle,line width=0.8pt] (v8) .. controls +(0,1.6) and +(0,1.6) .. (v3);
    \node[anchor=west,font=\small] at (0,4.0)
      {$\text{(b)}\quad G([3\ 7\ 2\ 6\ 1\ 5\ 4])$};
  \end{tikzpicture}
  \caption{Cycle graphs of the ordinary permutations obtained after
    renaming the targets to $\eiota$.  The thick baseline and thin arched
    edges are the black and gray edges, respectively.  The colors are expository.  If the
    vertex labels are ignored, panel~(b) is the horizontal reflection of
    panel~(a).}
  \label{fig:twisted-reflection-cycle-graphs}
\end{figure}

\begin{corollary}[Distance invariance]
  \label{cor:bracelet-distance-invariance}
  Transposition distance is constant on every realizable twisted
  bracelet.
\end{corollary}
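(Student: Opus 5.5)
The corollary should follow by combining Theorem~\ref{thm:bracelet-class-correspondence} with the distance invariance already proved for rotations and reflection. The plan is first to define transposition distance for a realizable word, and then to show it is unchanged by each generator of $\mathcal G(\mathcal P)$.

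For $\mathbf w\in\mathcal V(\mathcal P)$, the pair $(\omega_{\mathbf w},\beta)$ is a cyclic-target pair. Set $d_t(\mathbf w):=d_t(\beta^\upsilon)$, where $\upsilon$ is any renaming with $\rho_{\mathbf w}^\upsilon=\eiota$. By Proposition~\ref{prop:ordinary-target-renaming} this equals the minimum number of applicable $3$-cycles taking $\beta$ to $\rho_{\mathbf w}$. Any two admissible renamings differ by an element of $\langle\eiota\rangle$, so by Proposition~\ref{prop:rotation-invariance} this value does not depend on the choice of $\upsilon$.

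Next, $\mathcal G(\mathcal P)$ is generated by $\mathcal A(\mathcal P)$, $r_1$ and $s$, so it suffices to check invariance under each generator.
\begin{itemize}
\item For $a\in\mathcal A(\mathcal P)$, Proposition~\ref{prop:fixed-content-encoding} gives $\omega_{a\cdot\mathbf w}=\omega_{\mathbf w}$, so the two pairs coincide.
\item For rotations, the proof of Theorem~\ref{thm:bracelet-class-correspondence} shows that the pair of $r_\nu\cdot\mathbf w$ is the pair of $\mathbf w$ conjugated by a power of $\beta$. Conjugating a pair by a common renaming preserves applicability, by the argument of Proposition~\ref{prop:ordinary-target-renaming}. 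After renaming the target to $\eiota$, the two resulting current cycles are therefore toric rotations of each other, and Proposition~\ref{prop:rotation-invariance} applies.
\item For $s$, the same proof gives an explicit renaming $v$ with $F(\beta^\upsilon)^v=\beta$ and $\eiota^v$ equal to the target of $s\cdot\mathbf w$ conjugated by $\beta$. This identifies the normalized pair of $s\cdot\mathbf w$ with $F(\beta^\upsilon)$ up to toric rotation. Proposition~\ref{prop:reflection-preserves-sorting} gives $d_t(F(\beta^\upsilon))=d_t(\beta^\upsilon)$, and Proposition~\ref{prop:rotation-invariance} absorbs the rotation.
\end{itemize}

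A shorter route is to cite Theorem~\ref{thm:bracelet-class-correspondence} directly. All words of a realizable bracelet map to one extended-toric class, and $d_t$ is constant on $D_N$-orbits by Propositions~\ref{prop:rotation-invariance} and~\ref{prop:reflection-preserves-sorting}.

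The main point requiring care is the reflection step. The twisted reflection $s$ produces a target conjugated by $\beta$ rather than the target itself, so one must confirm that the renaming $v$ carries $\eiota$ to the new target exactly, with no leftover twist. I would rely on the identity $(\rho_{s\cdot\mathbf w})^\beta=(\rho_{\mathbf w}^{-1})^{\Reflect}$ from Corollary~\ref{cor:realizability-invariance}. The extra conjugation by $\beta$ fixes the current cycle $\beta$, so it is a common renaming of the whole pair and does not change the distance.
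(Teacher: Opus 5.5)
Your proposal is correct and follows the paper's route. The paper's proof is your ``shorter route'': it cites Theorem~\ref{thm:bracelet-class-correspondence} together with Propositions~\ref{prop:rotation-invariance}, \ref{prop:reflection-preserves-sorting}, and~\ref{prop:ordinary-target-renaming}, and your generator-by-generator check, including the observation that the leftover conjugation by $\beta$ fixes $\beta$ and is therefore a common renaming, just unpacks what that citation compresses.
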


\begin{proof}
  This follows from Theorem~\ref{thm:bracelet-class-correspondence} and
  Propositions~\ref{prop:rotation-invariance},
  \ref{prop:reflection-preserves-sorting}, and
  \ref{prop:ordinary-target-renaming}.
\end{proof}

\subsection{Direct generation of realizable representatives}

Sawada's~\cite{Sawada2003} method generates fixed-content necklace
representatives directly by extending only prefixes that can still be
completed to a word that is least among its rotations and by tracking the
unused content. Karim et al.~\cite{Karim2013} combine Sawada's prefix-based
ordinary-reflection test~\cite{Sawada2001} with the fixed-content recursion to
generate the least representative of each fixed-content bracelet.
Appendix~\ref{app:fixed-content-generation}
reviews the Lyndon-prefix parameter underlying this recursion and summarizes
the ordinary-reversal test.

Order the symbol alphabet first by the fixed color order and, within an
oriented color, by $0<1<\cdots<\lambda_i-1$; extend this order
lexicographically to words. Its Lyndon-prefix parameter determines the
fixed-content prenecklace tree. If the auxiliary action is trivial, we adapt
the fixed-content bracelet recursion of Karim et al.~\cite{Karim2013},
replacing its ordinary reversal by rank-inverting reversal. Otherwise, we use
Sawada's fixed-content necklace recursion and enforce every reflection through
the lazy twisted comparisons described below. In both cases, we reject a branch when an
oriented color completes one of the sequences in
Equation~\eqref{eq:forbidden-unoriented-ranks}. We call the resulting
procedure \emph{direct generation}; it operates at the quotient level without
first enumerating cyclic-target pairs or maintaining a set of previously
encountered orbits.

For every $g\in D_N$, the recursion compares the word under construction with
an auxiliary standardization of $g\cdot\mathbf w$. To construct this
standardization, read $g\cdot\mathbf w$ from left to right. Map a color at its
first occurrence to the least unused color with the same annotated part. For an
oriented color, simultaneously choose the rank shift that sends its first
encountered rank to $0$. Use the same color map and rank shift at all later
occurrences.

Each comparison advances when its next positions in the original and
standardized words have both been determined. If the standardized word first
becomes smaller, the recursion rejects the branch; if it first becomes larger,
the recursion retires that comparison. The comparisons include $g=r_0$, every
rotation $r_\nu$, and every rank-inverting reflected rotation $r_\nu s$. Thus
the procedure does not enumerate the auxiliary group or perform a separate
terminal canonicality test.
If every part is unoriented and the part sizes are pairwise distinct, the
auxiliary action is trivial. In that case the fixed-content bracelet state
already enforces all these inequalities, so no additional twisted comparison
is required.

Whenever a prefix determines an arc $x\mapsto y$ of $\omega_{\mathbf w}$ in
every completion $\mathbf w$, call $\beta^{-1}(x)\mapsto y$ a \emph{forced
target arc}. The forced target arcs form a partial directed graph. Call a
directed cycle in this graph \emph{proper} if it uses fewer than $N$ vertices.
The recursion rejects a prefix whose forced target arcs contain a proper
directed cycle and emits every complete branch that survives all the preceding
tests.

\begin{theorem}[Correctness of direct generation]
  \label{thm:twisted-generation-correctness}
  For every fixed-point-free annotated cycle type $\mathcal P$, the direct
  generation method produces exactly one representative of each realizable
  twisted bracelet of type $\mathcal P$.
\end{theorem}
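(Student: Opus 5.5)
The plan is to show that the emitted words are exactly the words $\mathbf w\in\mathcal V(\mathcal P)$ that are least in their $\mathcal G(\mathcal P)$-orbit under the fixed lexicographic order. The theorem then follows, because each realizable twisted bracelet contains a unique least word. I would break this into three steps.

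\textbf{Step 1: canonicity equals minimality over the standardized images.} For each $g\in D_N$, let $\operatorname{std}(g\cdot\mathbf w)$ denote the auxiliary standardization described above. I will prove that $\operatorname{std}(g\cdot\mathbf w)$ is the least element of the $\mathcal A(\mathcal P)$-orbit of $g\cdot\mathbf w$.

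The argument is greedy. Reading left to right, the first occurrence of each new color is sent to the least unused color of its annotated part, and its rank is sent to $0$. Any auxiliary image that agrees with the standardized word on a prefix but deviates at the first occurrence of a color or rank-origin choice is strictly larger at that position. Once a color's map and rank shift are fixed by its first occurrence, every later letter is determined.

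Since $\mathcal G(\mathcal P)=\mathcal A(\mathcal P)\rtimes D_N$ (Proposition~\ref{prop:auxiliary-dihedral-action}), the orbit of $\mathbf w$ is the union over $g$ of these $\mathcal A$-orbits. Hence $\mathbf w$ is the orbit minimum if and only if $\mathbf w\le\operatorname{std}(g\cdot\mathbf w)$ for every $g\in D_N$. Reflections are covered because every reflection has the form $r_\nu s$.

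\textbf{Step 2: the pruning tests reject no canonical realizable word, and every survivor is canonical and realizable.} This step has four parts.

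\begin{itemize}
\item \emph{Lazy comparisons.} A comparison rejects only when a determined position shows $\operatorname{std}(g\cdot\mathbf w)<\mathbf w$ for every completion. It retires only when strict $>$ is settled. At a complete word every comparison has resolved or ended in equality, so survival is exactly the criterion of Step~1.

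\item \emph{Underlying recursion.} The underlying necklace or bracelet recursion of Sawada and Karim et al.\ only prunes prefixes that cannot extend to rotation-least (respectively, reversal-least) words. Every $\mathcal G$-least word is in particular least among its rotations. In the trivial-auxiliary case with rank-inverting reversal, it is least among its twisted dihedral images. So this recursion loses nothing. When the auxiliary action is trivial, $\operatorname{std}$ is the identity and the twisted bracelet state already encodes all comparisons, which justifies omitting them.

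\item \emph{Forbidden-rank rejection.} Rejecting branches that complete a sequence of Equation~\eqref{eq:forbidden-unoriented-ranks} removes exactly the words outside $\mathcal W(\mathcal P)$.

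\item \emph{Forced target arcs.} A forced arc $\beta^{-1}(x)\mapsto y$ is an arc of $\rho_{\mathbf w}=\omega_{\mathbf w}\beta$ in every completion. A proper directed cycle among forced arcs is therefore a proper cycle of $\rho_{\mathbf w}$, so no completion is realizable. At a complete word all arcs are forced, and the forced graph is the functional graph of $\rho_{\mathbf w}$. Hence the absence of proper cycles is exactly realizability.
\end{itemize}

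\textbf{Step 3: exactly one output per realizable bracelet.} Realizability is a class invariant by Corollary~\ref{cor:realizability-invariance}. The recursion visits each word at most once, since the prenecklace tree has distinct leaves. Therefore each realizable bracelet's unique least word is emitted exactly once, and nothing else is emitted.

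\textbf{Expected main obstacle.} The hardest part is the soundness of the lazy comparison in Step~2. The standardization of $g\cdot\mathbf w$ depends on letters that may not yet be placed: under reflections, position $j$ of $g\cdot\mathbf w$ reads $w_{\nu-j}$, and the color and rank maps are fixed by first occurrences that may lie in an undetermined region. I would handle this by showing that the standardized letter at a position is determined as soon as that position and all earlier positions of $g\cdot\mathbf w$ are determined. This holds because first occurrences are taken in the reading order of $g\cdot\mathbf w$ itself, so the color map and rank shift used at a position depend only on letters read before it. Consequently a decision made at the first differing determined position is final for every completion, which is exactly what both rejection and retirement require.
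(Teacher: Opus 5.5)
Your proposal is correct and follows essentially the same route as the paper's proof. The bracelet minimum survives the prenecklace recursion, the forbidden-rank test, the lazy comparisons (by greedy minimality of auxiliary standardization) and the forced-target-arc test. Conversely, every emitted word satisfies $\mathbf v\le\operatorname{std}(g\cdot\mathbf v)$ for all $g\in D_N$, and it is realizable because its completed forced arcs are exactly the arcs of $\rho_{\mathbf v}$. Your observation that each standardized letter depends only on earlier positions of $g\cdot\mathbf w$ makes explicit why lazy rejection and retirement are sound, which the paper only asserts.
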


\begin{proof}
  Consider the least encoding word $\mathbf w$ in a realizable twisted
  bracelet.
  Since $D_N$ is a subgroup of $\mathcal G(\mathcal P)$, this word is also
  least under every rotation and rank-inverting reflected rotation with its
  color names and rank origins fixed. The fixed-content prenecklace recursion
  therefore reaches $\mathbf w$. Since $\mathbf w\in\mathcal W(\mathcal P)$,
  the forbidden-order test also retains it.

  We first justify the auxiliary standardization. Fix a word $\mathbf v$.
  At the first occurrence of a previously unseen color, any lexicographically
  least word in $\mathcal A(\mathcal P)\cdot\mathbf v$ must map that color to
  the least unused color with the same annotated part. For an oriented color,
  its first encountered rank must likewise be mapped to $0$. Induction over
  the positions of $\mathbf v$ shows that the standardization is the least
  word in $\mathcal A(\mathcal P)\cdot\mathbf v$.

  A comparison rejects a branch only after positions that no completion can
  change determine a strict inequality. For every $g\in D_N$, the
  standardization of $g\cdot\mathbf w$ is a member of the same
  $\mathcal G(\mathcal P)$-orbit as $\mathbf w$ and therefore cannot be
  smaller than $\mathbf w$. Hence no comparison rejects the bracelet minimum.

  The forced target arcs at every prefix are a subset of the single cycle
  $\rho_{\mathbf w}$. Such a subset cannot contain a proper directed cycle,
  since that cycle would already be a cycle of $\rho_{\mathbf w}$. Therefore
  every realizable bracelet minimum reaches completion and is emitted.

  Conversely, let $\mathbf v$ be an emitted word. Its completed comparisons
  and the minimality of auxiliary standardization give
  \begin{equation*}
    \mathbf v\leq a\cdot(r_\nu\cdot\mathbf v)
    \quad\text{and}\quad
    \mathbf v\leq a\cdot((r_\nu s)\cdot\mathbf v)
    \qquad
    (a\in\mathcal A(\mathcal P),\ \nu\in\mathbb Z_N).
  \end{equation*}
  Thus $\mathbf v$ is least in its $\mathcal G(\mathcal P)$-orbit. At
  completion, the forced target arcs are precisely the arcs of
  $\rho_{\mathbf v}$. They have indegree and outdegree one. If they formed
  more than one directed cycle, each would be proper and the branch would have
  been rejected. Hence $\rho_{\mathbf v}$ is an $N$-cycle, so $\mathbf v$ is
  realizable. Every finite orbit has a unique least word. The procedure
  therefore emits one representative from each realizable twisted bracelet
  and no others.
\end{proof}

Let
$U(\mathcal P):=N!/\prod_{\{i:\varepsilon_i=\mathrm{u}\}}\lambda_i!$
be the number of fixed-content words before the orientation restrictions are
imposed. Call a prefix \emph{stopped} when the recursion either emits it as a
completed word or rejects it by pruning. No stopped prefix is a proper prefix
of another. Complete each stopped prefix by appending its unused symbols in a
fixed order. Different stopped prefixes then produce different fixed-content
words. There are therefore at most $U(\mathcal P)$ stopped prefixes. A
root-to-stopped-prefix path has at most
$N$ extensions. At each extension, the realizability test and all other work
apart from the auxiliary-standardized comparisons cost $O(N)$. There are at
most $2N$ such comparisons, and each advances through at most $N$ positions
over the entire path. Hence each path costs $O(N^2)$. Summing over the stopped
prefixes only overcounts shared paths, so direct generation takes
$O(N^2U(\mathcal P))$ sequential time, including the $O(N)$ cost of writing
each emitted word but excluding subsequent use of the representatives.
When a symbol of maximum prescribed multiplicity is the greatest alphabet
symbol, Sawada and Karim et al.~\cite{Sawada2003,Karim2013} prove
constant-amortized-time bounds, excluding output writing, for their classical
fixed-content necklace and bracelet generators. Although our recursion explores
no prefixes beyond the corresponding classical recursion, the realizability
and comparison work just described is not covered by their analyses. Their
bounds therefore do not establish constant amortized time for our direct
generator.

\subsection{Counting twisted bracelets}

For $m\geq2$ and $\varepsilon\in\{\mathrm{u},\mathrm{o}\}$, let
$q_{m,\varepsilon}$ be the multiplicity of $m_{\varepsilon}$ in
$\mathcal P$, with $q_{2,\mathrm{o}}=0$.

\begin{proposition}[Raw words and auxiliary classes]
  \label{prop:encoding-word-count}
  The numbers of encoding words and auxiliary orbits are
  \begin{align}
    |\mathcal W(\mathcal P)|
    =
    \frac{N!}{
    \displaystyle\prod_{m\geq2}
    (m!)^{q_{m,\mathrm{u}}+q_{m,\mathrm{o}}}
    }
    \displaystyle\prod_{m\geq3}
    (m!-m)^{q_{m,\mathrm{o}}},
    \label{eq:raw-encoding-word-count}\\
    |\mathcal W(\mathcal P)/\mathcal A(\mathcal P)|
    =
    \frac{N!}{
    \displaystyle\prod_{m\geq2}
    (m!)^{q_{m,\mathrm{u}}+q_{m,\mathrm{o}}}
    q_{m,\mathrm{u}}!\,q_{m,\mathrm{o}}!
    }
    \displaystyle\prod_{m\geq3}
    \bigl((m-1)!-1\bigr)^{q_{m,\mathrm{o}}}.
    \label{eq:auxiliary-encoding-class-count}
  \end{align}
\end{proposition}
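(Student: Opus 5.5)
The count of $\mathcal W(\mathcal P)$ will be done in two stages: first the placement of colors, then the ranks within each oriented color. Choosing which positions carry which color is a multinomial count. The positions of the $\ell$ colors partition $\mathbb Z_N$ into labeled blocks of sizes $\lambda_1,\dots,\lambda_\ell$, and there are $N!/\prod_i\lambda_i!$ such partitions. Since the indices $i$ are distinct colors in the raw words, this product is exactly $\prod_{m\ge2}(m!)^{q_{m,\mathrm u}+q_{m,\mathrm o}}$.

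Next, fix the position set of an oriented color $c_i$ with $\lambda_i=m$. Its ranks, read left to right, form an arbitrary arrangement of $\{0,\dots,m-1\}$, giving $m!$ possibilities. The forbidden arrangements in Equation~\eqref{eq:forbidden-unoriented-ranks} are the $m$ cyclically descending sequences, one for each $h\in\mathbb Z_m$. They are pairwise distinct because their first entries differ, so $m!-m$ arrangements remain. The choices for different oriented colors are independent, and unoriented colors contribute no further choices. Multiplying gives \eqref{eq:raw-encoding-word-count}.

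For \eqref{eq:auxiliary-encoding-class-count}, I would show that $\mathcal A(\mathcal P)$ acts freely on $\mathcal W(\mathcal P)$ and then divide by $|\mathcal A(\mathcal P)|=\prod_m q_{m,\mathrm u}!\,q_{m,\mathrm o}!\,m^{q_{m,\mathrm o}}$. Freeness is the step that needs the most care. Suppose $a\cdot\mathbf w=\mathbf w$ with color permutation $\varphi$ and shifts $h_i$. Since $\mathbf w$ is unchanged, each position set must map to itself. The position sets of distinct colors are disjoint and nonempty, so $\varphi$ must be the identity. For an oriented color, fixing the word means $(c_i,k+h_i)$ sits where $(c_i,k)$ sat, for every $k$. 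The ranks on that color are distinct, so $h_i=0$. Hence every stabilizer is trivial.

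Freeness gives every orbit size $|\mathcal A(\mathcal P)|$. Dividing \eqref{eq:raw-encoding-word-count} by this order and using $(m!-m)/m=(m-1)!-1$ then yields \eqref{eq:auxiliary-encoding-class-count}. As a cross-check, Proposition~\ref{prop:fixed-content-encoding} identifies this quotient with the fixed-point-free permutations of annotated type $\mathcal P$. These can be counted directly: choose the supports, count the unordered equal-part families, and count the $(m-1)!-1$ oriented cyclic orders on an $m$-set. All cyclic orders except the one induced by $\beta^{-1}$ are oriented, which is consistent with the formula.
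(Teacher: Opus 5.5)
Your proposal is correct and follows the paper's proof essentially step for step. You count the supports by a multinomial coefficient and allow $m!-m$ rank orders for each oriented color; you then show the $\mathcal A(\mathcal P)$-action is free because every color and every oriented rank occurs, and divide by $|\mathcal A(\mathcal P)|$ using $(m!-m)/m=(m-1)!-1$. You add a little detail the paper omits (distinctness of the $m$ forbidden sequences, an explicit stabilizer computation) and a cross-check via Proposition~\ref{prop:fixed-content-encoding} that the paper does not need.
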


\begin{proof}
  First treat all colors as distinguished. Partition the $N$ positions into
  their supports, giving the factorial quotient in
  Equation~\eqref{eq:raw-encoding-word-count}. An unoriented support has one
  admissible filling. The $m$ ranked symbols of an oriented color have $m!$
  orders, of which the $m$ cyclically descending orders in
  Equation~\eqref{eq:forbidden-unoriented-ranks} are excluded. This proves the
  raw-word count.

  The auxiliary action is free because every color occurs and every rank of
  an oriented color occurs. Its group has order
  $|\mathcal A(\mathcal P)|=\prod_{m\geq2}q_{m,\mathrm{u}}!
  \prod_{m\geq3}m^{q_{m,\mathrm{o}}}q_{m,\mathrm{o}}!$.
  Dividing the raw-word count by this order and using
  $(m!-m)/m=(m-1)!-1$ proves
  Equation~\eqref{eq:auxiliary-encoding-class-count}.
\end{proof}

\begin{proposition}[Recovery of the Doignon--Labarre~\cite{doignon2007hultman} counts]
  \label{prop:recover-doignon-labarre}
  Let $\lambda$ be a fixed-point-free cycle type on $N$ symbols, and let
  $H_N(\lambda)$ be the number of permutations in $S_{N-1}$ whose cycle
  graph has alternating-cycle lengths $\lambda$. Then
  \begin{equation}
    H_N(\lambda)
    =
    \sum_{\substack{\mathcal P\text{ has underlying}\\
                    \text{cycle type }\lambda}}
    \left|\mathcal V(\mathcal P)/\mathcal A(\mathcal P)\right|.
    \label{eq:recover-doignon-labarre}
  \end{equation}
\end{proposition}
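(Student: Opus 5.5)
The plan is to build a bijection between permutations $\pi\in S_{N-1}$ and the cyclic-target pairs $(\omega,\beta)$ with the fixed current cycle $\beta=(0\ 1\ \cdots\ N-1)$. Then I would split the count by annotated type and apply Proposition~\ref{prop:fixed-content-encoding}.

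First, every $\pi\in S_{N-1}$ gives the ordinary SBT pair $(\spi,\epi)$ with target $\eiota$. By the remark on algebraic permutations and cycle graphs (Silva et al.), the cycles of $\spi$ correspond, with lengths preserved, to the alternating cycles of $G(\pi)$. So $G(\pi)$ has alternating-cycle lengths $\lambda$ exactly when $\spi$ has cycle type $\lambda$ on $E_N$. Since $\lambda$ is fixed-point-free, $\spi$ is then fixed-point-free. Thus $H_N(\lambda)$ counts the $N$-cycles $\epi$ with $\epi(0)=\pi_1$, that is, the anchored $N$-cycles, for which $\eiota\epi^{-1}$ has type $\lambda$. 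Every $N$-cycle has a unique anchored writing, so this is the same as counting all $N$-cycles $\epi$ with $\eiota\epi^{-1}$ of type $\lambda$.

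Second, I would transfer the target to the current cycle. Conjugating by a fixed $\upsilon$ with $\eiota^{\upsilon}=\beta$ is not the right move, since it moves the target rather than the current cycle. Instead I would use the following bijection. Send $\epi\mapsto(\omega,\epi)$ with $\omega=\eiota\epi^{-1}$. Choose $\upsilon$ with $\epi^\upsilon=\beta$; this $\upsilon$ is unique up to left factors in $\langle\beta\rangle$. Then map to $\omega^\upsilon$. The resulting pair $(\omega^\upsilon,\beta)$ has target $\eiota^\upsilon$, which is an $N$-cycle, and $\omega^\upsilon$ has type $\lambda$.

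The ambiguity is handled by a sizing argument. The group $S_N$ acts by simultaneous conjugation on pairs (current $N$-cycle, target $N$-cycle) whose quotient $\rho\beta^{-1}$ has type $\lambda$. The stabilizer of each pair is trivial, because it must centralize the current cycle and therefore lie in $\langle\beta\rangle$, and it must also fix the target. If necessary I would argue this count-wise instead, as follows. The set $T$ of all such pairs has size $(N-1)!\cdot|\{\omega:\omega\beta\text{ an }N\text{-cycle},\ \omega\text{ of type }\lambda\}|$, obtained by fixing the current cycle in $(N-1)!$ ways. It also has size $(N-1)!\cdot|\{\epi\}|$, obtained by fixing the target, using the renaming symmetry of Proposition~\ref{prop:ordinary-target-renaming}. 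Equating the two expressions gives
\[
H_N(\lambda)=\bigl|\{\omega\text{ of type }\lambda:\ \omega\beta\text{ is an }N\text{-cycle}\}\bigr|.
\]
I expect this double-counting step to be the main point needing care, and I would write it with the conjugation action of $S_N$ on $T$.

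Finally, I would partition the $\omega$ on the right-hand side by the orientation of each cycle relative to $\beta$. This gives a unique annotated type $\mathcal P$ with underlying type $\lambda$, where $2$-cycles are forced to be unoriented. Proposition~\ref{prop:fixed-content-encoding} matches the permutations of annotated type $\mathcal P$ bijectively with $\mathcal W(\mathcal P)/\mathcal A(\mathcal P)$. Under this matching, the realizability condition that $\omega\beta$ be an $N$-cycle selects exactly $\mathcal V(\mathcal P)/\mathcal A(\mathcal P)$, which is well-defined by Corollary~\ref{cor:realizability-invariance}. Summing over $\mathcal P$ then yields Equation~\eqref{eq:recover-doignon-labarre}.
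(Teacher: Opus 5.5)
Your argument is correct once you drop the sentence asserting that every pair in $T$ has trivial stabilizer, because that claim is false. The stabilizer of $(\gamma,\rho)$ under simultaneous conjugation is $\langle\gamma\rangle\cap C_{S_N}(\rho)$. A nontrivial $\gamma^j$ lies in it exactly when $\omega=\rho\gamma^{-1}$ commutes with $\gamma^j$, which is the situation counted by the nonidentity rotation terms in the Burnside formula. For instance, take $N=9$ and $\lambda=(3^3)$. Then $(\beta,\beta^4)\in T$ with $\omega=\beta^3=(0\ 3\ 6)(1\ 4\ 7)(2\ 5\ 8)$, and its stabilizer contains all of $\langle\beta\rangle$. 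So do not write this step through a free action or orbit--stabilizer. Your fiber count needs no freeness: both projections of $T$ onto $N$-cycles are $S_N$-equivariant onto a transitive set, so all fibers have equal size, and that suffices.

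Compared with the paper, you differ in two places.
\begin{itemize}
\item \textbf{Starting point.} The paper cites Doignon--Labarre's Theorem~10 to identify $H_N(\lambda)$ with permutations $\sigma$ of type $\lambda$ for which $\beta\sigma^{-1}$ is an $N$-cycle. It then passes to $\omega=\sigma^{-1}$ via $\sigma^{-1}(\beta\sigma^{-1})\sigma=\omega\beta$. You start instead from the Silva et al.\ correspondence recalled in the paper's remark. This keeps the argument inside the paper's own $(\spi,\epi)$ formalism, which is a reasonable trade.
\item \textbf{Transfer step.} You transfer by double counting over $T$, but that step is unnecessary. On $E_N$ the cycles $\eiota$ and $\beta$ are literally the same $(0\ 1\ \cdots\ n)$. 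So $\epi\mapsto\omega:=\epi\beta^{-1}=(\eiota\epi^{-1})^{-1}$ is already a bijection onto $\{\omega \text{ of type } \lambda : \omega\beta \text{ is an } N\text{-cycle}\}$, with inverse $\omega\mapsto\omega\beta$. It works because inversion preserves cycle type and $\omega\beta=\epi$. This is exactly the inversion the paper performs, and it removes both the ambiguous choice of $\upsilon$ and the counting over $T$.
\end{itemize}

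Your final step matches the paper's. You partition by annotation and invoke Proposition~\ref{prop:fixed-content-encoding} together with the $\mathcal A(\mathcal P)$-invariance of realizability.
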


\begin{proof}
  Fix $\beta=(0\ 1\ \cdots\ N-1)$. The bijection of Doignon and
  Labarre~\cite[Theorem~10]{doignon2007hultman} identifies the permutations
  counted by $H_N(\lambda)$ with the permutations $\sigma\in S_N$ of cycle
  type $\lambda$ for which $\beta\sigma^{-1}$ is an $N$-cycle. Apply the
  cycle-type-preserving bijection $\sigma\mapsto\omega:=\sigma^{-1}$. The
  target conditions are equivalent because
  \[
    \sigma^{-1}(\beta\sigma^{-1})\sigma
    =\sigma^{-1}\beta
    =\omega\beta.
  \]
  Hence this is equivalently the set of permutations $\omega$ of cycle type
  $\lambda$ for which $\omega\beta$ is an $N$-cycle.

  The annotated cycle types $\mathcal P$ with underlying cycle type
  $\lambda$ partition this set. Proposition~\ref{prop:fixed-content-encoding}
  identifies the part indexed by $\mathcal P$ with
  $\mathcal V(\mathcal P)/\mathcal A(\mathcal P)$. Summing over the parts
  proves Equation~\eqref{eq:recover-doignon-labarre}.
\end{proof}

Equation~\eqref{eq:recover-doignon-labarre} separates the Doignon--Labarre
count by cycle orientations. The following result then quotients each
summand by extended-toric equivalence.

\begin{corollary}[Burnside count for twisted bracelets]
  \label{cor:twisted-bracelet-burnside}
  The number $B_{\mathrm{real}}(\mathcal P)$ of realizable twisted
  bracelets of type $\mathcal P$ is
  \[
    B_{\mathrm{real}}(\mathcal P)
    =
    \frac{1}{2N}\sum_{\nu\in\mathbb Z_N}
    \left(
    |\Fix_{\mathcal V(\mathcal P)/\mathcal A(\mathcal P)}(r_\nu)|
    +|\Fix_{\mathcal V(\mathcal P)/\mathcal A(\mathcal P)}(r_\nu s)|
    \right).
  \]
\end{corollary}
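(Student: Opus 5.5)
The plan is to apply the Burnside lemma (Lemma~\ref{lem:burnside}) to the action of $D_N$ on the finite set $X:=\mathcal V(\mathcal P)/\mathcal A(\mathcal P)$. By definition, a realizable twisted bracelet of type $\mathcal P$ is a $\mathcal G(\mathcal P)$-orbit contained in $\mathcal V(\mathcal P)$, and equivalently a $D_N$-orbit on the auxiliary quotient. It therefore suffices to establish two things: that $D_N$ genuinely acts on $X$, and that the $D_N$-orbits on $X$ are exactly the realizable twisted bracelets.

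First I check the action. Proposition~\ref{prop:auxiliary-dihedral-action} gives a well-defined action of $D_N$ on $\mathcal W(\mathcal P)/\mathcal A(\mathcal P)$ through the identity $g\cdot(a'\cdot\mathbf w)=(a')^g\cdot(g\cdot\mathbf w)$. This identity shows that $g$ sends auxiliary orbits to auxiliary orbits. Corollary~\ref{cor:realizability-invariance} shows that $\mathcal V(\mathcal P)$ is a union of auxiliary orbits and is $D_N$-stable. Hence $X$ is a $D_N$-invariant subset of $\mathcal W(\mathcal P)/\mathcal A(\mathcal P)$, and the action restricts to $X$.

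Next I identify the orbits. A $\mathcal G(\mathcal P)$-orbit of a word $\mathbf w\in\mathcal V(\mathcal P)$ is the union of the auxiliary orbits $\mathcal A(\mathcal P)\cdot(g\cdot\mathbf w)$ for $g\in D_N$. This uses the multiplication rule $(a,g)\cdot\mathbf w=a\cdot(g\cdot\mathbf w)$. So the orbit corresponds bijectively to the $D_N$-orbit of $\mathcal A(\mathcal P)\cdot\mathbf w$ in $X$. Conversely, every $D_N$-orbit in $X$ arises this way. Therefore $B_{\mathrm{real}}(\mathcal P)=|X/D_N|$.

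Finally, Burnside gives $|X/D_N|=\frac{1}{2N}\sum_{g\in D_N}|\Fix_X(g)|$. From the presentation and the relations in Proposition~\ref{prop:twisted-dihedral-action}, the $2N$ elements of $D_N$ are exactly the rotations $r_\nu$ and the reflected rotations $r_\nu s$ for $\nu\in\mathbb Z_N$, with no repetitions, since $|D_N|=2N$. Splitting the sum over these two families gives the displayed formula.

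The only delicate step is the orbit identification. One must make sure that fixed points are counted on the auxiliary quotient and not on raw words. A class is fixed when $g\cdot\mathbf w\in\mathcal A(\mathcal P)\cdot\mathbf w$, not when $g\cdot\mathbf w=\mathbf w$. This distinction is exactly why the formula is stated with $\Fix_{\mathcal V(\mathcal P)/\mathcal A(\mathcal P)}$.
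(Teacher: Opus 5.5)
Your proposal is correct and follows the paper's proof. The paper simply applies Lemma~\ref{lem:burnside} to the $D_N$-action on $\mathcal V(\mathcal P)/\mathcal A(\mathcal P)$ supplied by Proposition~\ref{prop:auxiliary-dihedral-action} and Corollary~\ref{cor:realizability-invariance}; you additionally spell out the restriction of the action, the identification of orbits with realizable twisted bracelets, and the splitting of $D_N$ into the $r_\nu$ and $r_\nu s$, which the paper leaves implicit.
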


\begin{proof}
  Apply Lemma~\ref{lem:burnside} to the action of $D_N$ on
  $\mathcal V(\mathcal P)/\mathcal A(\mathcal P)$ from
  Proposition~\ref{prop:auxiliary-dihedral-action} and
  Corollary~\ref{cor:realizability-invariance}.
\end{proof}

\begin{example}[One oriented $5$-cycle and two unoriented $3$-cycles]
  \label{ex:five-three-three-burnside}
  Let $\mathcal P=(5_{\mathrm{o}},3_{\mathrm{u}}^2)$.
  Here $N=11$. Choose the support of the $5$-cycle, one of its
  $(5-1)!-1=23$ oriented cyclic orders, and an unordered partition of the
  remaining six symbols into two $3$-element supports. Each such support
  has a unique unoriented cyclic order relative to $\beta^{-1}$. Thus
  Equation~\eqref{eq:auxiliary-encoding-class-count} gives
  $\binom{11}{5}\,23\,\frac{1}{2}\binom{6}{3}=106260$
  auxiliary classes of encoding words. Independent enumeration of these
  classes and the target-cycle test gives the fixed-set counts in
  Table~\ref{tab:five-three-three-fixed-sets}.
  \begin{table}[ht]
    \centering
    \caption{Fixed-set counts for the action of $D_{11}$ on the auxiliary
    classes of type $\mathcal P=(5_{\mathrm{o}},3_{\mathrm{u}}^2)$.}
    \label{tab:five-three-three-fixed-sets}
    \begin{tabular}{@{}lrr@{}}
      \toprule
      Elements of $D_{11}$ & Number of group elements
                             & $\lvert\Fix_{\mathcal V(\mathcal P)/\mathcal A(\mathcal P)}(g)\rvert$ \\
      \midrule
      Identity               & $1$                                            & $9702$ \\
      Nonidentity rotations  & $10$                                           & $0$    \\
      Reflected rotations    & $11$                                           & $76$   \\
      \bottomrule
    \end{tabular}
  \end{table}
  To justify the zero entry, if an auxiliary class is fixed by
  $r_\nu$, its decoded permutation commutes with $\beta^\nu$. Since
  $11$ is prime and $\nu\ne0$,
  $C_{S_{11}}(\beta^\nu)=\langle\beta\rangle$, and no element of this cyclic
  group has cycle type $(5,3^2)$. All reflections in $D_{11}$ are
  conjugate, so their fixed sets have the same size. Therefore
  Corollary~\ref{cor:twisted-bracelet-burnside} gives
  $B_{\mathrm{real}}(\mathcal P)=\frac{9702+11\cdot76}{22}
  =479$.
  Thus there are exactly $479$ extended-toric equivalence classes of
  cyclic-target pairs whose algebraic permutation has one oriented $5$-cycle
  and two unoriented $3$-cycles.
\end{example}

\section{The transposition diameters of
  \texorpdfstring{$S_{16}$ and $S_{19}$}{S16 and S19}}
\label{sec:td16}

We now apply the twisted-bracelet framework to the last undetermined diameter
through $S_{17}$.  Throughout this section, $N=17$ and an instance in
$S_{16}$ is represented by the ordinary SBT pair $(\spi,\epi)$. The ambient
space has $16!$ instances, so we combine fixed-point contraction with other structural reductions and direct
generation at the quotient level. The computer-assisted part uses two
independent computations. One emits sorting witnesses for the required
twisted bracelets, and a separate checker reconstructs the routing and
enumeration and replays every witness. Their certificate format is the only
interface. The source and instructions permit both certificate regeneration
and cheaper independent checking~\cite{sourcecode}.

We route every instance through disjoint cases. If $\spi$ has a fixed point,
Propositions~\ref{prop:adjacency-fixed-point}
and~\ref{prop:contraction-preserves-distance} reduce the problem to an
instance in $S_{15}$, so $TD(15)=9$ gives
$d_t(\epi)\leq9$~\cite{eriksson2001sorting}. We may therefore assume that $\spi$ is fixed-point-free. Let
$\lambda(\spi)$ denote its cycle-length partition of $17$. Since $\spi\in
A_{17}$, $\lambda(\spi)$ has an even number of even parts. There are exactly
$33$ such partitions. After attaching orientation, with equal parts
indistinguishable and $2$-cycles necessarily unoriented, they give $194$
annotated cycle types.

Among the oriented odd-length cycles of length at least $5$, select one of
minimum length. An \emph{even-pair reduction} replaces two even-length cycles
of lengths $x$ and $y$ in one move by a fixed point and a cycle of length
$x+y-1$. It preserves the target because the move changes $(\omega,\beta)$ to
$(\omega\tau^{-1},\tau\beta)$. Table~\ref{tab:td16-root-routing} is a priority
routing, so a type enters the first applicable row.

\begin{table}[ht]
  \centering
  \caption{Priority routing of the fixed-point-free annotated cycle types.}
  \label{tab:td16-root-routing}
  \small
  \begin{tabularx}{\textwidth}{@{}cXrX@{}}
    \toprule
    Route & First applicable condition & Types & Treatment \\
    \midrule
    $\mathbf U$   & every cycle is unoriented
      & $33$ & extended-toric search \\
    $\mathbf T$   & an oriented $3$-cycle is present
      & $74$ & Corollary~\ref{cor:oriented-three-cycles} \\
    $\mathbf E_9$ & the selected length is $9$, and an even-length cycle is
      oriented
      & $3$ & even-pair reduction \\
    $\mathbf O$   & a selected odd-length cycle remains
      & $47$ & odd-length-cycle reduction \\
    $\mathbf E$   & none of the preceding conditions holds
      & $37$ & even-pair reduction \\
    \midrule
    \multicolumn{2}{@{}l}{Total} & $194$ & \\
    \bottomrule
  \end{tabularx}
\end{table}

\begin{proposition}[Annotated-cycle-type routing]
  \label{prop:td16-root-routing}
  The five routes in Table~\ref{tab:td16-root-routing} are disjoint and
  exhaust the $194$ fixed-point-free annotated cycle types.
\end{proposition}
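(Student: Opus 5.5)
Disjointness and exhaustiveness follow from the priority-routing convention itself. The substantive work is to check that the set being routed has $194$ elements, that the row counts $33,74,3,47,37$ are correct, and that every route's treatment has what it needs.

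\emph{Disjointness and exhaustiveness.} Each type is assigned to the first row of Table~\ref{tab:td16-root-routing} whose condition it satisfies. Route $\mathbf E$ has the condition ``none of the preceding conditions holds,'' which is the complement of the union of the conditions of the earlier rows. Hence every type satisfies at least one condition, and is sent to exactly one route. This shows that the five routes partition the set of fixed-point-free annotated cycle types of $A_{17}$, whatever its size.

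\emph{The total $194$.} First list the partitions of $17$ with all parts at least $2$ and an even number of even parts. I will organize the list by the multiset of even parts and check that there are $33$. For a partition with $q_m$ parts of length $m$, an annotation is determined by the number of oriented parts of each length $m\ge3$, which ranges over $0,\dots,q_m$. The $2$-cycles have no choice, since they are unoriented. So the partition contributes $\prod_{m\ge3}(q_m+1)$ annotated types, and summing these products over the $33$ partitions should give $194$.

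\emph{The row counts.} The same tabulation gives each row count.
\begin{itemize}
\item $\mathbf U$: exactly one annotation per partition is all-unoriented, so this row has $33$ types.
\item $\mathbf T$: among the remaining types, count those with $q_{3,\mathrm o}\ge1$; this should be $74$.
\item $\mathbf E_9$: among types with no oriented $3$-cycle, take those whose minimal oriented odd length $\ge5$ equals $9$ and that have an oriented even cycle; this should be $3$.
\item $\mathbf O$: the types, not already routed, that have some oriented odd cycle of length $\ge5$; this should be $47$.
\item $\mathbf E$: the residue, which should be $37$, and indeed $194-33-74-3-47=37$.
\end{itemize}
I expect this bookkeeping over the $33$ partitions to be the main obstacle. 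It is routine but error-prone, so I would carry it out in a single table indexed by partition and cross-check it against the independent enumeration in the certificate checker.

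\emph{Treatments are applicable.} Route $\mathbf T$ applies Corollary~\ref{cor:oriented-three-cycles} to its oriented $3$-cycle. For a type in $\mathbf E$, every cycle of length $3$ or at least $5$ is unoriented, since otherwise it would have entered $\mathbf T$ or $\mathbf O$. The type is also not all-unoriented, so some cycle is oriented; that cycle must have even length at least $4$. Since $\spi$ is even, Proposition~\ref{prop:even-pair-reduction} then supplies a second even-length cycle and the even-pair move. Route $\mathbf E_9$ likewise contains an oriented even-length cycle by its defining condition, so the same proposition applies there.
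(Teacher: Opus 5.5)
Your proposal follows the paper's proof. The priority rule, with $\mathbf E$ as the catch-all, gives disjointness and exhaustiveness; the $\prod_{k\geq3}(m_k+1)$ annotation count summed over the $33$ partitions gives $194$ (grouping by zero, two, four, and six even parts gives $38+110+40+6=194$, so your deferred tabulation does close); and parity supplies the second even-length cycle for $\mathbf E_9$ and $\mathbf E$. One wording slip: a type in $\mathbf E$ has every \emph{odd-length} cycle unoriented, not every cycle of length $3$ or at least $5$ (e.g.\ $(6_{\mathrm{o}},3_{\mathrm{u}}^3,2_{\mathrm{u}})$ lies in $\mathbf E$), and that corrected statement is exactly what your conclusion, that the oriented cycle has even length, requires.
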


\begin{proof}
  The priority rule gives disjointness. If $m_k$ is the multiplicity of part
  $k$, then a partition has $\prod_{k\geq3}(m_k+1)$ annotations: for each
  $k\geq3$, any number of the $m_k$ indistinguishable cycles may be oriented,
  whereas $2$-cycles are unoriented. Summing over the $33$ partitions gives
  $194$, and applying the conditions in order gives
  $33,74,3,47,37$ types, whose sum is $194$.

  The three types in route $\mathbf E_9$ are
  $(9_{\mathrm{o}},6_{\mathrm{o}},2_{\mathrm{u}})$,
  $(9_{\mathrm{o}},4_{\mathrm{o}},4_{\mathrm{u}})$, and
  $(9_{\mathrm{o}},4_{\mathrm{o}}^2)$. A type reaching route
  $\mathbf E$ is not all-unoriented but has no oriented odd-length cycle.
  It has an oriented even-length cycle, and the parity of $\spi$ guarantees at
  least two even parts, so an even-pair reduction is available.
\end{proof}

For route $\mathbf U$, set
$p=\norm{\spi}$, equivalently
$p=\sum_{k\in\lambda(\spi)}\lfloor k/2\rfloor$. In
Table~\ref{tab:td16-unoriented}, $2^a$ means that the part $2$ occurs $a$
times. The last column counts realizable extended-toric classes.

\begin{table}[ht]
  \centering
  \caption{Route $\mathbf U$: all-unoriented fixed-point-free cycle types in
    $A_{17}$.}
  \label{tab:td16-unoriented}
  \footnotesize
  \begin{tabularx}{\textwidth}{@{}cXr@{}}
    \toprule
    $p$ & Partitions $\lambda$ & Extended-toric classes \\
    \midrule
    --- & $(17)$; $(13,2^2)$; $(12,3,2)$; $(11,4,2)$; $(11,3^2)$;
          $(10,5,2)$; $(10,4,3)$; $(9,6,2)$; $(9,5,3)$; $(9,4^2)$;
          $(9,2^4)$; $(8,3,2^3)$ & $0$ \\
    $8$ & $(8,7,2)$; $(8,6,3)$; $(8,5,4)$; $(7,6,4)$;
          $(7,4,2^3)$; $(6^2,5)$;
          $(6,5,2^3)$; $(6,4,3,2^2)$; $(5,4^2,2^2)$; $(5,2^6)$;
          $(4^3,3,2)$; $(4,3,2^5)$ & $1\,612\,777$ \\
    $7$ & $(7^2,3)$; $(7,5^2)$; $(7,3^2,2^2)$;
          $(6,3^3,2)$; $(5^2,3,2^2)$; $(5,4,3^2,2)$;
          $(4^2,3^3)$; $(3^3,2^4)$ & $2\,182\,743$ \\
    $6$ & $(5,3^4)$ & $63\,364$ \\
    \bottomrule
  \end{tabularx}
\end{table}

Direct generation found no realizable word in the first row. For each of the
$3\,858\,884$ extended-toric classes in the remaining rows, the certificate
contains a sorting sequence of length at most $9$. The checker independently
regenerates its representative and replays the sequence.

In route $\mathbf T$, Corollary~\ref{cor:oriented-three-cycles} clears an
oriented $3$-cycle in one move. Contraction of the three resulting fixed
points and $TD(13)=8$ give $d_t(\epi)\leq1+8=9$ for all $74$ types in this
route~\cite{eriksson2001sorting}.

For a cyclic-target pair $(\omega,\beta)$ and a cycle $\Cycle$ of its
algebraic permutation $\omega$, let $\beta_{\Cycle}$ be the cycle obtained
from $\beta$ by deleting the symbols outside $\Supp(\Cycle)$. We call
$(\Cycle,\beta_{\Cycle})$ the \emph{local configuration induced by
$(\omega,\beta)$ on $\Supp(\Cycle)$}. A $3$-cycle whose support is contained
in $\Supp(\Cycle)$ is applicable to $\beta$ if and only if it is applicable
to $\beta_{\Cycle}$. Moreover, its move type and whether it creates a fixed
point are determined by replacing $\Cycle$ with $\Cycle\tau^{-1}$, since all
other cycles of $\omega$ are unchanged.

Call a sequence of moves $\tau_1,\dots,\tau_t$ \emph{support-clearing} for
$\Cycle$ if $\Supp(\tau_i)\subseteq\Supp(\Cycle)$ and $\tau_i$ is applicable
at its step for every $i$, and if the resulting algebraic permutation fixes
every symbol of $\Supp(\Cycle)$. Deleting the symbols outside
$\Supp(\Cycle)$ at each step shows that such a sequence is determined by the
induced local configuration. Length $15$ does not occur
because the only fixed-point-free partition of $17$ containing that part is
$(15,2)$, whose permutations are odd.

For two oriented cycles $\Cycle$ and $\Cycle'$ of the same length $k$, choose
bijections from their supports to $\mathbb Z_k$ that send
$\beta_{\Cycle}$ and $\beta_{\Cycle'}$, respectively, to
$\beta_k=(0\ 1\ \cdots\ k-1)$, and apply each bijection to both components of
its local configuration. The configurations are equivalent under
\emph{twisted-bracelet symmetry on their supports} when the resulting
encoding words lie in the same auxiliary--dihedral orbit from
Section~\ref{sec:twisted-bracelets}.

For a $k$-cycle $\gamma$ and $u,v\in\Supp(\gamma)$, let the \emph{directed
cyclic distance} $\operatorname{dist}_{\gamma}(u,v)$ be the unique integer in
$\{0,1,\dots,k-1\}$ such that
$\gamma^{\operatorname{dist}_{\gamma}(u,v)}(u)=v$.

\begin{lemma}[Qualifying-move criterion]
  \label{lem:qualifying-criterion}
  Let $(\omega,\beta)$ be a cyclic-target pair and $\Cycle$ an odd $k$-cycle
  of $\omega$, where $k\geq3$. Suppose that, for some
  $a\in\Supp(\Cycle)$ and $m\in\{2,4,\dots,k-1\}$, the symbols $a$,
  $\Cycle(a)$, and $\Cycle^m(a)$ occur in this cyclic order in
  $\beta_{\Cycle}$. Then $\tau=(a\ \Cycle(a)\ \Cycle^m(a))$ is a
  qualifying $2$-move with
  $\Supp(\tau)\subseteq\Supp(\Cycle)$.
\end{lemma}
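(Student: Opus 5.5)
Three things must be checked: the support claim, applicability, and that $\tau$ is a $2$-move producing a fixed point. The support claim is immediate, because $a$, $\Cycle(a)$ and $\Cycle^m(a)$ all lie in $\Supp(\Cycle)$. They are pairwise distinct: $k\geq3$ gives $\Cycle(a)\neq a$; $m\neq0$ in $\mathbb Z_k$ gives $\Cycle^m(a)\neq a$; and $m\neq1$ gives $\Cycle^m(a)\neq\Cycle(a)$. So $\tau$ is a genuine $3$-cycle.

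For applicability, I will use the remark before the lemma: a $3$-cycle supported in $\Supp(\Cycle)$ is applicable to $\beta$ if and only if it is applicable to $\beta_{\Cycle}$. Deleting symbols from $\beta$ preserves the cyclic order of the remaining ones. The hypothesis that $a$, $\Cycle(a)$, $\Cycle^m(a)$ occur in this cyclic order in $\beta_{\Cycle}$ is exactly applicability of $\tau$ in the sense of Section~\ref{sec:preliminaries}. By Lemma~\ref{lem:applicability}, $\tau\beta$ is then an $N$-cycle.

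The main step is computing $\Cycle\tau^{-1}$; the other cycles of $\omega$ are unchanged. Write $\Cycle=(a\ b\ X\ c\ Y)$ with $b=\Cycle(a)$ and $c=\Cycle^m(a)$. Here $X$ is the string $\Cycle^2(a),\dots,\Cycle^{m-1}(a)$ of length $m-2$, and $Y$ has length $k-m-1$. Now $\tau^{-1}=(a\ c\ b)$, and right multiplication applies $\tau^{-1}$ first. Tracking images gives:
\[
a\mapsto c\mapsto \Cycle(c),\qquad c\mapsto b\mapsto \Cycle(b),\qquad b\mapsto a\mapsto b.
\]
So $b$ becomes a fixed point. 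The remaining symbols form the cycles $(a\ Y)$ and $(c\ X)$, up to the exact placement of $a$ and $c$, which I will verify by direct substitution in the same way as the multiplication in Proposition~\ref{prop:even-pair-reduction}. The resulting cycles have lengths $1$, $m-1$ and $k-m+1$.

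Since $m$ is even and $k$ is odd, both $m-1$ and $k-m+1$ are odd. Thus the single odd cycle $\Cycle$ is replaced by three odd cycles, so $c^\circ_{\mathrm{odd}}$ increases by $2$. Hence $\tau$ is a $2$-move, and the fixed point $b$ makes it qualifying. The extended definitions for cyclic-target pairs from Section~\ref{sec:cyclic-targets} apply, since $\tau$ sends $(\omega,\beta)$ to $(\omega\tau^{-1},\tau\beta)$.

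The only delicate point is getting the multiplication convention right, so that the fixed point lands on $b$ and the cycle lengths come out as $m-1$ and $k-m+1$. Both lengths must be at least $1$, and this holds because $2\leq m\leq k-1$. The boundary cases $m=2$ (where $X$ is empty and $c$ becomes a fixed point) and $m=k-1$ (where $Y$ is empty and $a$ becomes a fixed point) fit the same formula.
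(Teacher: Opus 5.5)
Your strategy matches the paper's: write $\Cycle=(a\ b\ X\ c\ Y)$, get applicability through $\beta_{\Cycle}$, and compute $\Cycle\tau^{-1}$. Your image tracking correctly gives $\Cycle\tau^{-1}=(a\ Y)(b)(c\ X)$. The error is in the decisive length-and-parity step.

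\begin{itemize}
\item The cycle $(a\ Y)$ has length $1+\lvert Y\rvert=1+(k-m-1)=k-m$, not $k-m+1$. As a check, your lengths $1$, $m-1$, $k-m+1$ sum to $k+1$ rather than $k$.
\item Your claim that $k-m+1$ is odd is false. With $k$ odd and $m$ even it is even; for example, $k=5$, $m=2$ gives $4$.
\item Your own boundary case $m=k-1$, where you correctly say $a$ becomes a fixed point, contradicts the formula $k-m+1=2$.
\end{itemize}
Because the oddness of both new nontrivial cycles is exactly what makes $\tau$ a $2$-move, the argument as written rests on a false assertion at its key step.

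The repair is immediate. The lengths are $1$, $m-1$, and $k-m$, and both $m-1$ and $k-m$ are odd because $m$ is even and $k$ is odd. Equivalently, as the paper phrases it, $\lvert X\rvert=m-2$ and $\lvert Y\rvert=k-m-1$ are both even, so each of $(c\ X)$ and $(a\ Y)$ has odd length, one plus an even number. With this correction, your condition $2\leq m\leq k-1$ is exactly what makes both lengths at least $1$. The rest of your argument then goes through: distinctness of the three symbols, the support claim, applicability via $\beta_{\Cycle}$ and Lemma~\ref{lem:applicability}, and the fixed point $b$ making the move qualifying.
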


\begin{proof}
  Let $b=\Cycle(a)$ and $c=\Cycle^m(a)$. Write
  $\Cycle=(a\ b\ Y\ c\ Z)$, where $\lvert Y\rvert=m-2$ and
  $\lvert Z\rvert=k-m-1$. The cyclic-order condition makes $\tau$
  applicable, and direct multiplication gives
  $\Cycle\tau^{-1}=(a\ Z)(b)(c\ Y)$.
  Both $\lvert Y\rvert$ and $\lvert Z\rvert$ are even because $m$ is even
  and $k$ is odd. Thus the odd cycle $\Cycle$ is replaced by three odd
  cycles, including the fixed point $(b)$, so $\tau$ is a qualifying
  $2$-move.
\end{proof}

\begin{corollary}
  \label{cor:half-turn}
  Let $(\omega,\beta)$ be a cyclic-target pair and $\Cycle$ an odd $k$-cycle
  of $\omega$, where $k\geq3$. If no qualifying $2$-move $\tau$ satisfies
  $\Supp(\tau)\subseteq\Supp(\Cycle)$, then
  \[
    \operatorname{dist}_{\beta_{\Cycle}}(a,\Cycle(a))
    \geq\frac{k+1}{2}
    \qquad\text{for every }a\in\Supp(\Cycle).
  \]
\end{corollary}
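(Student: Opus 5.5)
We argue by contraposition, using Lemma~\ref{lem:qualifying-criterion}. Suppose some $a\in\Supp(\Cycle)$ satisfies $d:=\operatorname{dist}_{\beta_{\Cycle}}(a,\Cycle(a))\leq\frac{k-1}{2}$. We will find an even $m\in\{2,4,\dots,k-1\}$ such that $a$, $\Cycle(a)$, $\Cycle^m(a)$ occur in this cyclic order in $\beta_{\Cycle}$. The lemma then produces a qualifying $2$-move supported in $\Supp(\Cycle)$, which contradicts the hypothesis.

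Set $b=\Cycle(a)$. The $k-2$ symbols of $\Supp(\Cycle)\setminus\{a,b\}$ fall into two classes. The first class consists of the $d-1$ symbols strictly between $a$ and $b$ when $\beta_{\Cycle}$ is read from $a$. The second consists of the $k-1-d$ symbols strictly between $b$ and $a$ when reading on from $b$. A symbol $c$ makes the cyclic order $a,b,c$ exactly when $c$ lies in the second class. So it suffices to show that the second class contains some $\Cycle^m(a)$ with $m$ even and $2\leq m\leq k-1$. These candidate symbols are $\Cycle^2(a),\Cycle^4(a),\dots,\Cycle^{k-1}(a)$. They are distinct, there are $\frac{k-1}{2}$ of them, and none equals $a$ or $b$.

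Now count. If every candidate lay in the first class, we would need $\frac{k-1}{2}\leq d-1$, that is, $d\geq\frac{k+1}{2}$. This contradicts $d\leq\frac{k-1}{2}$. Hence some candidate $c=\Cycle^m(a)$ lies in the second class, the cyclic-order hypothesis of Lemma~\ref{lem:qualifying-criterion} holds, and the contradiction follows.

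The only delicate step is the counting. It relies on three facts: $k$ is odd, so the even exponents $2,\dots,k-1$ give exactly $\frac{k-1}{2}$ symbols; these symbols are distinct, since $\Cycle$ is a $k$-cycle; and they avoid $a$ and $b$, since $\Cycle^0(a)=a$, $\Cycle^1(a)=b$, and $0<2\leq m\leq k-1$. Everything else follows directly from the lemma.
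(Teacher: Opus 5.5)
Your proof is correct and is essentially the paper's argument. The paper also writes $\beta_{\Cycle}=(a\ X\ \Cycle(a)\ Y)$, uses Lemma~\ref{lem:qualifying-criterion} to force all $\frac{k-1}{2}$ symbols $\Cycle^{2}(a),\Cycle^{4}(a),\dots,\Cycle^{k-1}(a)$ into $X$, and concludes $\operatorname{dist}_{\beta_{\Cycle}}(a,\Cycle(a))=\lvert X\rvert+1\geq\frac{k+1}{2}$; the only difference is that you phrase this same count as a contraposition rather than directly.
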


\begin{proof}
  Fix $a\in\Supp(\Cycle)$ and write
  $\beta_{\Cycle}=(a\ X\ \Cycle(a)\ Y)$, where $X$ and $Y$ are possibly
  empty strings. For every even $m$ with $2\leq m\leq k-1$,
  Lemma~\ref{lem:qualifying-criterion} and the hypothesis imply that $a$,
  $\Cycle(a)$, and $\Cycle^m(a)$ do not occur in this cyclic order in
  $\beta_{\Cycle}$, so $\Cycle^m(a)$ occurs in $X$. Thus $X$ contains the
  $(k-1)/2$ symbols $\Cycle^2(a),\Cycle^4(a),\dots,\Cycle^{k-1}(a)$; hence
  $\operatorname{dist}_{\beta_{\Cycle}}(a,\Cycle(a))
  =\lvert X\rvert+1\geq(k+1)/2$. The claim follows, since $a$ was arbitrary.
\end{proof}

The exceptional twisted bracelet in the following proposition is represented
by the local configuration
\begin{equation}
  \Cycle_9=(0\ 5\ 1\ 6\ 2\ 7\ 3\ 8\ 4),
  \qquad
  \beta_9=(0\ 1\ 2\ 3\ 4\ 5\ 6\ 7\ 8).
  \label{eq:exceptional-nine-cycle}
\end{equation}

\begin{proposition}[Odd-length-cycle reduction]
  \label{prop:td16-odd-cycle-reduction}
  Let $(\spi,\epi)$ be an ordinary SBT pair on $E_{17}$, and let $\Cycle$ be
  an oriented cycle of $\spi$ whose length $k$ is one of
  $5,7,9,11,13,17$. Unless the induced local configuration
  $(\Cycle,\beta_{\Cycle})$ belongs to the twisted bracelet represented by
  $(\Cycle_9,\beta_9)$ in Equation~\eqref{eq:exceptional-nine-cycle}, either
  there is a qualifying $2$-move $\tau$ with
  $\Supp(\tau)\subseteq\Supp(\Cycle)$ or there is a support-clearing sequence
  for $\Cycle$ of length $(k+1)/2$.
\end{proposition}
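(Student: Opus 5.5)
The plan is to reduce everything to the local configuration $(\Cycle,\beta_{\Cycle})$ and then settle it by a finite, symmetry-reduced check. By the remarks preceding Lemma~\ref{lem:qualifying-criterion}, both properties in the conclusion are determined by the induced local configuration. These are the existence of a qualifying $2$-move with support inside $\Supp(\Cycle)$, and the existence of a support-clearing sequence. Applicability, move type, fixed-point creation, and the clearing property all depend only on $(\Cycle,\beta_{\Cycle})$, because all other cycles of $\spi$ are untouched. After renaming $\Supp(\Cycle)$ to $\mathbb Z_k$ so that $\beta_{\Cycle}=\beta_k$, the configuration is a cyclic-target-type pair $(\Cycle,\beta_k)$. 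Here $\Cycle$ is an oriented $k$-cycle, but $\Cycle\beta_k$ need not be a $k$-cycle, so realizability is not imposed. It is therefore encoded by a word of type $(k_{\mathrm{o}})$, and by Proposition~\ref{prop:decoder-equivariance} the two properties are invariant under the auxiliary--dihedral action. This invariance holds because rotation and twisted reflection conjugate moves as in Propositions~\ref{prop:rotation-invariance} and~\ref{prop:reflection-preserves-sorting}, and they preserve fixed points. Hence it suffices to examine one representative per twisted bracelet of type $(k_{\mathrm{o}})$, in the candidate sense, for each $k\in\{5,7,9,11,13,17\}$.

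Next I would prune with Corollary~\ref{cor:half-turn}. If no qualifying move lies inside the support, then every arc $a\mapsto\Cycle(a)$ advances at least $(k+1)/2$ steps along $\beta_k$. Summing these distances over the $k$ arcs gives a multiple of $k$ (the winding number of $\Cycle$ relative to $\beta_k$) that is at least $k(k+1)/2$. Each distance is at most $k-1$. This severely restricts $\Cycle$: every step lies in $\{(k+1)/2,\dots,k-1\}$, so $\Cycle$ is a "long-jump" cycle. The extreme case, in which every step equals $(k+1)/2$, is the half-turn rotation. For $k=9$ this is exactly $\Cycle_9=(0\ 5\ 1\ 6\ \cdots\ 4)$. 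I would enumerate these surviving configurations up to twisted-bracelet symmetry, using the direct generation of Section~\ref{sec:twisted-bracelets} with the qualifying-move test added as pruning. The number of survivors should be small.

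For each survivor other than the exceptional bracelet, I would exhibit a support-clearing sequence of length $(k+1)/2$. Such a sequence is a factorization of $\Cycle^{-1}$-type clearing by $(k+1)/2$ applicable $3$-cycles inside the support, which is necessarily one $0$-move followed by $2$-moves, since $\norm{\Cycle}=(k-1)/2$. The natural first attempt is a uniform construction: apply a $0$-move that breaks the long-jump structure, after which Lemma~\ref{lem:qualifying-criterion} supplies qualifying moves repeatedly on the resulting odd cycles. Where this fails, a bounded breadth-first search on $k$ symbols would find the sequence. That search is cheap even for $k=17$ once restricted to the survivors, and it is recorded in the certificate.

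The main obstacle is controlling the survivor set for larger $k$ (notably $13$ and $17$) and proving that the $(k+1)/2$ bound, rather than $(k+3)/2$, is achievable for all of them. It also has to be shown that $\Cycle_9$ is the only genuine exception; I expect the half-turn cycles for other $k$ to admit a good $0$-move. I would first try to handle the half-turn family uniformly by hand and leave the remaining survivors to the exhaustive local search.
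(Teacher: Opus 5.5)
Your plan is essentially the paper's proof. Both reduce to the local configuration, run a pruned search over oriented $k$-cycles relative to $\beta_k$ using Corollary~\ref{cor:half-turn} and Lemma~\ref{lem:qualifying-criterion}, and then find and certify support-clearing sequences for the survivors by search. The paper enumerates every writing $(0\ x_1\ \cdots\ x_{k-1})$ directly, without your bracelet reduction, and finds that the survivors are the powers of $\beta$ in Table~\ref{tab:local-exceptions}; all of them are clearable except $\beta_9^5=\Cycle_9$.

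Two small corrections:
\begin{itemize}
\item If you keep the bracelet reduction, reflection invariance of the qualifying-move property needs its own argument. The local pair need not be realizable (for example $\Cycle_9\beta_9=\beta_9^6$), so Proposition~\ref{prop:reflection-preserves-sorting} does not apply. The map $\tau\mapsto(\tau^{-1})^{\Reflect}$ still works, since it preserves applicability to $\beta_k$ and $(\Cycle^{-1})^{\Reflect}\tau^{\Reflect}=(\Cycle^{-1}\tau)^{\Reflect}$ has the cycle type of $\Cycle\tau^{-1}$.
\item Only the \emph{number} of $0$-moves in a clearing sequence is forced (exactly one), not its position. For $\beta_{17}^{13}$, the $3$-cycle $(0\ 5\ 10)$ is an applicable non-qualifying $2$-move, so your search must not assume the $0$-move comes first.
\end{itemize}
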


\begin{proof}[Computational verification]
  Relabel the support of the induced local configuration by $\mathbb Z_k$ so
  that $\beta_{\Cycle}$ becomes $\beta_k=(0\ 1\ \cdots\ k-1)$, and write the
  relabeled algebraic cycle as
  $\Cycle=(x_0\ x_1\ \cdots\ x_{k-1})$ with $x_0=0$. Every $k$-cycle has
  exactly one such writing. A depth-first search over
  $x_1,\dots,x_{k-1}$ enumerates these writings, pruning by two conditions
  that are necessary for the absence of a qualifying $2$-move. First,
  $(x_{j+1}-x_j)\bmod k\geq(k+1)/2$ for every $j$, by
  Corollary~\ref{cor:half-turn}. Second, for every $i$ and every even $m$ with
  $2\leq m\leq k-1$, the triple $x_i$, $x_{i+1}$, $x_{i+m}$ must fail the
  cyclic order of Lemma~\ref{lem:qualifying-criterion}; indices here are taken
  modulo $k$, and each instance is tested as soon as its three entries are
  determined. Consequently, every cycle not already covered by the
  qualifying-move alternative survives the pruning; the retained set may
  a priori be larger.

  For $k=5,7,9$, the searches visit $14$, $80$, and $506$ nodes,
  respectively. For $k=11,13,17$, they visit $3\,391$, $23\,619$, and
  $1\,243\,098$ nodes, respectively, and terminate in seconds. For each such
  $k$, the cycles surviving both pruning conditions are exactly the powers
  of $\beta$ listed in
  Table~\ref{tab:local-exceptions}, together with
  $\beta^{k-1}=\beta^{-1}$, which is unoriented and therefore excluded. Each
  listed power admits a support-clearing sequence of length $(k+1)/2$,
  recorded in the certificate and replayed by the checker, except
  $\beta_9^{5}$, which is the cycle of
  Equation~\eqref{eq:exceptional-nine-cycle}; an exhaustive search over all
  sequences of $(k+1)/2$ applicable $0$- and $2$-moves confirms that it admits
  none.
\end{proof}

\begin{table}[ht]
  \centering
  \caption{Route $\mathbf O$: the oriented $k$-cycles surviving both pruning
    conditions. Every one is a power of $\beta$, and
    $\Cycle_9=\beta_9^5$.}
  \label{tab:local-exceptions}
  \small
  \begin{tabular}{@{}rlr@{}}
    \toprule
    $k$ & Surviving cycles & Number \\
    \midrule
    $5$  & $\beta^{3}$                               & $1$ \\
    $7$  & $\beta^{4}$                               & $1$ \\
    $9$  & $\beta^{5}$, $\beta^{7}$                 & $2$ \\
    $11$ & $\beta^{6}$                               & $1$ \\
    $13$ & $\beta^{7}$, $\beta^{10}$, $\beta^{11}$ & $3$ \\
    $17$ & $\beta^{9}$, $\beta^{13}$, $\beta^{15}$ & $3$ \\
    \bottomrule
  \end{tabular}
\end{table}

Route $\mathbf E_9$ precedes route $\mathbf O$ because an annotated cycle type
does not determine whether its selected $9$-cycle is exceptional. Thus every
such type with an oriented even-length cycle uses an even-pair reduction. The
even-pair reduction in Proposition~\ref{prop:even-pair-reduction} creates
exactly one fixed point and joins cycles of lengths $x$ and $y$ into one of
length $x+y-1$.

For route $\mathbf O$, a support-clearing sequence closes its branch because
$(k+1)/2+TD(16-k)=9$ for $k=5,7,9,11,13$, using the known smaller
diameters~\cite{eriksson2001sorting}; for $k=17$, the support-clearing
sequence itself has length $9$. A qualifying $2$-move splits the
selected cycle into three odd-length cycles. If it creates two fixed points,
then $1+TD(14)=9$ closes the branch. If it creates one, the other two cycles
have odd lengths at least $3$ and total length $k-1$.

For the exception in route $\mathbf O$, call the other nontrivial cycles the
\emph{companions}. They must be unoriented: otherwise an oriented $3$-cycle
routes to $\mathbf T$, an oriented $5$- or $7$-cycle is selected first, or an
oriented even-length cycle routes to $\mathbf E_9$. Their partition has size
$8$, and fixed-point-freeness and parity leave exactly $(6,2)$, $(5,3)$,
$(4^2)$, and $(2^4)$. For each row of
Table~\ref{tab:exceptional-nine}, fixed-content interleaving with the nine
ranked symbols of $\Cycle_9$ is exhaustive; twisted-bracelet identification
precedes the realizability test.

\begin{table}[ht]
  \centering
  \caption{Route $\mathbf O$: exhaustive verification of the exceptional
    oriented $9$-cycle. The third column counts realizable extended-toric
    classes.}
  \label{tab:exceptional-nine}
  \small
  \begin{tabular}{@{}lrrr@{}}
    \toprule
    Annotated cycle type & Candidate bracelets & Realizable classes & Sorted in $\leq9$ \\
    \midrule
    $(9_{\mathrm{o}},6_{\mathrm{u}},2_{\mathrm{u}})$
      & $20\,160$ & $527$    & $527$ \\
    $(9_{\mathrm{o}},5_{\mathrm{u}},3_{\mathrm{u}})$
      & $40\,040$ & $2\,096$ & $2\,096$ \\
    $(9_{\mathrm{o}},4_{\mathrm{u}}^2)$
      & $25\,410$ & $1\,643$ & $1\,643$ \\
    $(9_{\mathrm{o}},2_{\mathrm{u}}^4)$
      & $75\,950$ & $12\,732$& $12\,732$ \\
    \bottomrule
  \end{tabular}
\end{table}

Every type in route $\mathbf E$ also admits the reduction from
Proposition~\ref{prop:even-pair-reduction}, as shown in the proof of
Proposition~\ref{prop:td16-root-routing}. The three types in $\mathbf E_9$ and
the $37$ in $\mathbf E$ therefore contribute $40$ branches that create
exactly one fixed point.

The qualifying $2$-moves from route $\mathbf O$ that create exactly one fixed
point and the even-pair reductions from routes $\mathbf E_9$ and $\mathbf E$
lead to the same remaining case. Call the resulting instance a
\emph{one-fixed-point successor}. For each successor, we recompute every
cycle-orientation annotation because the move changes the current cyclic
order.

In a one-fixed-point successor, an oriented $3$-cycle closes the branch by
Corollary~\ref{cor:oriented-three-cycles}, since the two moves already used
together with $TD(12)=7$ give $2+7=9$~\cite{eriksson2001sorting}. For every
remaining successor, we contract its fixed-point edge, directly generate
every realizable twisted bracelet of each resulting annotated cycle type on
$16$ symbols, and search within eight moves. Because different branches can
produce the same annotated successor, each distinct enumeration is a shared
\emph{successor search}.

Both computations performed the same $52$ successor searches. They generated
$4\,511\,223\,147$ extended-toric classes, of which exact replay certified
$4\,511\,221\,111$ within eight moves. The remaining
$2\,036$ were retained conservatively as residual obstructions. Of these,
$495$ have type $(11_{\mathrm{o}},5_{\mathrm{o}})$ and $1\,541$ have type
$(9_{\mathrm{o}},7_{\mathrm{o}})$.

To avoid relying on the selected move for a residual obstruction, restore its
contracted fixed point in each of the $16$ gaps of the current cycle; the
target $\rho=\omega\beta$ contains the same fixed-point edge. Call each result
a \emph{restored successor}. Its possible original classes form two inverse
families. The \emph{odd-split} family chooses the fixed point and one symbol
from each residual cycle, reversing a move that split one odd-length cycle.
The \emph{even-pair} family chooses the fixed point and two symbols from one
residual cycle, reversing a move that joined two even-length cycles. Exactly
one cyclic orientation of each chosen triple gives an applicable inverse move,
which preserves the target. A reconstruction retained in the corresponding
first-move family is an \emph{avoidance preimage}.
Table~\ref{tab:td16-avoidance} gives the counts.

\begin{table}[ht]
  \centering
  \caption{Exhaustive obstruction-avoidance verification.}
  \label{tab:td16-avoidance}
  \small
  \begin{tabular}{@{}lrr@{}}
    \toprule
    Inverse family & Inverse moves tested & Avoidance-preimage classes \\
    \midrule
    Odd-split & $1\,988\,928$ & $1\,339\,101$ \\
    Even-pair & $1\,920\,192$ & $178\,439$ \\
    \midrule
    Union (zero overlap) & & $1\,517\,540$ \\
    \bottomrule
  \end{tabular}
\end{table}

For every avoidance-preimage class in this union, exact replay certifies a
sorting sequence of length at most $9$. The reconstruction move is not part
of that sequence. Hence every original class from which the selected first
move could reach a residual obstruction has distance at most $9$.

The checker regenerates the required representative of every
twisted bracelet and replays its recorded sequence. It verifies applicability,
target preservation, the identity final state, and the length bound. It also
recomputes all routing, residual-obstruction, and avoidance-preimage counts.
Any missing or invalid
witness, count discrepancy, or uncovered case causes failure. An unsuccessful
search enters the residual analysis or causes witness emission to fail; it
never supports a distance conclusion. Complete source code, per-type totals,
the certificate specification, and reproduction instructions are available
online~\cite{sourcecode}, as is the certificate~\cite{td16certificate}.

\begin{theorem}
  \label{thm:td16}
  The transposition diameter of $S_{16}$ is $TD(16)=9$.
\end{theorem}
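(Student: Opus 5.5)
The lower bound and the upper bound are proved separately. For the lower bound I quote the reverse-permutation formula of Eriksson et al.\ recalled in the introduction, which gives $d_t([16\ 15\ \cdots\ 1])=\lceil 17/2\rceil=9$, so $TD(16)\geq9$. The Elias--Hartman bound $\lfloor 17/2\rfloor+1=9$ gives the same conclusion. The rest of the argument is the upper bound: every $\pi\in S_{16}$ satisfies $d_t(\epi)\leq9$ on $E_{17}$.

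For the upper bound I take an arbitrary ordinary SBT pair $(\spi,\epi)$ and split on fixed points.

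\emph{Fixed point present.} By Propositions~\ref{prop:adjacency-fixed-point} and~\ref{prop:contraction-preserves-distance}, contraction reduces to $S_{15}$, and $TD(15)=9$ gives the bound.

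\emph{Fixed-point-free.} Proposition~\ref{prop:td16-root-routing} places the annotated cycle type of $\spi$ in exactly one of the routes $\mathbf U,\mathbf T,\mathbf E_9,\mathbf O,\mathbf E$. By Theorem~\ref{thm:bracelet-class-correspondence} and Corollary~\ref{cor:bracelet-distance-invariance}, it suffices to treat one representative per realizable twisted bracelet. Theorem~\ref{thm:twisted-generation-correctness} guarantees that direct generation produces exactly these representatives.

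\begin{itemize}
\item[$\mathbf U$:] Table~\ref{tab:td16-unoriented} and the replayed certificate give a sorting sequence of length at most $9$ for every class.
\item[$\mathbf T$:] Corollary~\ref{cor:oriented-three-cycles}, three contractions and $TD(13)=8$ give $1+8$.
\item[$\mathbf O$:] Proposition~\ref{prop:td16-odd-cycle-reduction} applies. A support-clearing sequence closes the branch via $(k+1)/2+TD(16-k)=9$. A qualifying $2$-move creating two fixed points closes it via $1+TD(14)$. The exceptional $9$-cycle is settled by the exhaustive Table~\ref{tab:exceptional-nine}.
\item[$\mathbf E_9$, $\mathbf E$:] Proposition~\ref{prop:even-pair-reduction} supplies a qualifying move.
\end{itemize}

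All remaining branches are one-fixed-point successors, reached after one move and contracted to $16$ symbols. It then suffices that each such successor has distance at most $8$. When a successor contains an oriented $3$-cycle, this follows from $2+TD(12)$. Otherwise it comes from the $52$ successor searches.

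The main obstacle is the $2{,}036$ residual successors of types $(11_{\mathrm o},5_{\mathrm o})$ and $(9_{\mathrm o},7_{\mathrm o})$ that were not certified within eight moves. The selected first move might be a bad choice for these, so I would not argue through it. Instead I invert it. I restore the contracted fixed point in each of the $16$ gaps, then enumerate the odd-split and even-pair inverse moves to recover every original class whose chosen first move could land on a residual obstruction; these are the avoidance preimages of Table~\ref{tab:td16-avoidance}. The certificate then supplies for each such class an independent sorting sequence of length at most $9$. So every original class routed to a residual obstruction is bounded directly, and every other class is bounded through its successor.

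The proof then closes by checking exhaustiveness. The routing table and residual analysis cover every fixed-point-free type, and the independently rebuilt checker replays every witness, so no case supports a conclusion except by a verified sequence. Hence $TD(16)\leq9$, and with the lower bound $TD(16)=9$.
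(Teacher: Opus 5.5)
Your proposal is correct and follows the paper's proof step for step. It uses the same reverse-permutation lower bound and fixed-point contraction, the same five-route case split with the same closing arithmetic, the one-fixed-point successor searches, and the avoidance-preimage treatment of the $2\,036$ residual obstructions. One small slip: for $k=17$ the expression $(k+1)/2+TD(16-k)$ is undefined, and that branch closes because the support-clearing sequence of length $9$ already reaches the identity, as the paper notes.
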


\begin{proof}
  Fixed-point contraction settles every instance whose algebraic permutation
  has an adjacency. For a fixed-point-free instance,
  Proposition~\ref{prop:td16-root-routing} assigns its annotated cycle type to
  exactly one route. Table~\ref{tab:td16-unoriented} closes $\mathbf U$, and
  Corollary~\ref{cor:oriented-three-cycles} closes $\mathbf T$.
  Proposition~\ref{prop:td16-odd-cycle-reduction} closes the support-clearing
  branches of $\mathbf O$, while
  Table~\ref{tab:exceptional-nine} closes its sole exceptional local twisted
  bracelet.
  The remaining qualifying moves and the even-pair reductions for
  $\mathbf E_9$ and $\mathbf E$ either close by a known smaller diameter or
  enter the one-fixed-point successor layer. The $52$ successor searches
  close all but the recorded residual obstructions, and
  Table~\ref{tab:td16-avoidance} closes every original class that can reach
  one. Thus $d_t(\epi)\leq9$ for every $\epi$ representing a permutation in
  $S_{16}$, so $TD(16)\leq9$.
  Eriksson et al.'s reverse-permutation formula gives
  $TD(16)\geq\lceil17/2\rceil=9$~\cite{eriksson2001sorting}.  Therefore
  $TD(16)=9$.
\end{proof}

\begin{corollary}
  \label{cor:td19}
  The transposition diameter of $S_{19}$ is $TD(19)=11$.
\end{corollary}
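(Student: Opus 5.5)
The plan is to squeeze $TD(19)$ between the Eriksson recurrence, fed by Theorem~\ref{thm:td16}, and the Elias--Hartman lower bound. The introduction already outlines this argument.

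For the upper bound, we use the recurrence $TD(n)\leq TD(n-3)+2$ of Eriksson et al.~\cite{eriksson2001sorting}, valid for $n\geq9$. At $n=19$ it gives $TD(19)\leq TD(16)+2$, and Theorem~\ref{thm:td16} supplies $TD(16)=9$. Hence $TD(19)\leq11$.

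For the lower bound, we use $TD(n)\geq\lfloor(n+1)/2\rfloor+1$ of Elias and Hartman~\cite{EliasHartman2006}. At $n=19$ it gives $\lfloor 20/2\rfloor+1=11$. The two bounds coincide, so $TD(19)=11$.

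There is no real obstacle here. The only points to check are that $n=19$ lies in the range of validity of both cited bounds and that the floor evaluates as stated. All of the difficulty was absorbed into Theorem~\ref{thm:td16}.
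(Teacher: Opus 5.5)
Your proposal is correct and follows the paper's proof: the Eriksson recurrence with Theorem~\ref{thm:td16} gives $TD(19)\leq TD(16)+2=11$, and the Elias--Hartman bound gives $TD(19)\geq\lfloor 20/2\rfloor+1=11$. The paper additionally notes that the general bound $\lfloor(2\cdot19-2)/3\rfloor=12$ is the one being sharpened, but this changes nothing in the argument.
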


\begin{proof}
  The general upper bound of Eriksson et al.~\cite{eriksson2001sorting}
  gives $TD(19)\leq\lfloor(2\cdot19-2)/3\rfloor=12$. Their recurrence and
  Theorem~\ref{thm:td16} sharpen this to $TD(19)\leq TD(16)+2=11$.
  The lower bound of Elias and
  Hartman~\cite{EliasHartman2006} gives
  $TD(19)\geq\lfloor(19+1)/2\rfloor+1=11$.  Therefore $TD(19)=11$.
\end{proof}

\begin{corollary}\label{cor:upper-bound-mod3}
  For every $n\equiv 1\pmod{3}$ with $n\geq 16$,
  \[
    TD(n) \;\leq\; \left\lfloor\frac{2n-2}{3}\right\rfloor - 1.
  \]
\end{corollary}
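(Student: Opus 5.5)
The plan is induction on $n$ along the residue class $n\equiv1\pmod3$, with step $3$, using the Eriksson recurrence $TD(n)\leq TD(n-3)+2$ for $n\geq9$~\cite{eriksson2001sorting}. Write $n=3j+1$ with $j\geq5$. Then
\[
\left\lfloor\frac{2n-2}{3}\right\rfloor=\left\lfloor\frac{6j}{3}\right\rfloor=2j,
\]
so the claimed bound reads $TD(3j+1)\leq 2j-1$.

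The base case $j=5$ is $n=16$. Theorem~\ref{thm:td16} gives $TD(16)=9=2\cdot5-1$, as required.

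For the inductive step, assume $TD(3j+1)\leq2j-1$ for some $j\geq5$. Since $n=3(j+1)+1\geq19\geq9$, the recurrence applies and gives
\[
TD(3(j+1)+1)\leq TD(3j+1)+2\leq 2j+1=2(j+1)-1.
\]
This is exactly the bound for $j+1$, which completes the induction.

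I would also note that the right-hand side is one less than the Eriksson et al. bound $\lfloor(2n-2)/3\rfloor$, which is valid for $n\geq9$. This justifies calling the result an improvement by one for these $n$.

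The main point to check is that the recurrence's hypothesis $n\geq9$ holds at every step. This is immediate, since every $n$ in the induction is at least $16$. The arithmetic of the floor is exact because $2n-2=6j$ is divisible by $3$. No other obstacle arises: all the difficulty is concentrated in Theorem~\ref{thm:td16}.
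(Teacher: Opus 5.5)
Your proof is correct and is essentially the paper's argument. The paper writes $n=16+3k$, iterates the Eriksson recurrence to get $TD(n)\leq TD(16)+2k=9+2k$, and notes that $\lfloor(2n-2)/3\rfloor=10+2k$; this is your induction with $j=k+5$.
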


\begin{proof}
  Write $n=16+3k$ with $k\geq0$.  Iterating the Eriksson
  recurrence~\cite{eriksson2001sorting} gives
  $TD(n)\leq TD(16)+2k=9+2k$.  Since $n\equiv1\pmod{3}$, we have
  $\lfloor(2n-2)/3\rfloor=(2n-2)/3=10+2k$, and the claim follows.
\end{proof}

\section{Conclusion}\label{sec:conclusion}

We developed the twisted-bracelet framework for cyclic-target pairs of
prescribed fixed-point-free annotated cycle type.  Its fixed-content encoding,
auxiliary cycle action, and twisted dihedral action turn extended-toric
equivalence into an orbit problem. The orbit correspondence gives exact
counting identities, while the direct-generation method produces one
representative of each realizable extended-toric class.
Together, they replace enumeration followed by symmetry reduction with a
direct description of the symmetry classes.

For $S_{16}$, we first reduced the ambient space by combining fixed-point
contraction with other structural reductions. For the remaining cases,
we employed the twisted-bracelet framework to directly generate the extended-toric
classes for exhaustive sorting search and verification. Theorem~\ref{thm:td16}
establishes $TD(16)=9$, closing the twenty-five-year gap among the exact
diameters through $n=17$.  This result also determines the next odd case.
The Eriksson recurrence~\cite{eriksson2001sorting} lowers the upper bound for
$TD(19)$ from $12$ to $11$, which meets the Elias--Hartman lower
bound~\cite{EliasHartman2006}. Corollary~\ref{cor:td19} therefore establishes
$TD(19)=11$. More generally, Corollary~\ref{cor:upper-bound-mod3} lowers the Eriksson
upper bound by one for every $n\equiv1\pmod{3}$, $n\geq16$.

Among the diameters $TD(n)$ with $n\leq19$, only $TD(18)$ remains
undetermined, making it a natural next target for a systematic approach using
the twisted-bracelet framework, similar to the one used here to determine
$TD(16)$.
Another direction concerns the general upper bound of Eriksson et al.,
$TD(n)\leq \left\lfloor\frac{2n-2}{3}\right\rfloor$ for $n\geq9$~\cite{eriksson2001sorting}.
The framework could be used to enumerate and generate extremal extended-toric classes, whose study may inform new arguments to improve this
long-standing bound.

\section*{Acknowledgments}

The methodology, mathematical results, algorithms, and computational design
were developed by the authors. OpenAI Codex and Anthropic Claude Code were used to assist with
implementation, manuscript editing, and the preparation of preliminary proof
drafts. The authors reviewed and tested all AI-assisted code and critically
revised and finalized all proof drafts. The authors independently verified
the mathematical claims and computational results. The authors assume
responsibility for all content.

\appendix

\section{Cycle graph}\label{appendix-a}

We use the standard cycle graph of Bafna and Pevzner~\cite{BafnaPevzner1998},
as recalled by Silva et al.~\cite[Appendix~A1]{silva2023barrier}.  Let
$\pi=[\pi_1\;\pi_2\;\dots\;\pi_n]\in S_n$, and set $\pi_0=0$ and
$\pi_{n+1}=n+1$.  The \emph{cycle graph} $G(\pi)$ is the directed,
edge-colored graph with vertex set
$\{+0,\allowbreak-1,\allowbreak+1,\allowbreak-2,\allowbreak+2,
\allowbreak\dots,\allowbreak-n,\allowbreak+n,\allowbreak-(n+1)\}$.
Its black edges are
$-\pi_i\longrightarrow+\pi_{i-1}$ for $1\leq i\leq n+1$, and its gray
edges are $+i\longrightarrow-(i+1)$ for $0\leq i\leq n$.  Thus the black
edges encode $\pi$, and the gray edges encode the identity permutation.

Every positive vertex has one incoming black edge and one outgoing gray
edge, and every negative vertex has one incoming gray edge and one outgoing
black edge.  Consequently, every vertex has in-degree and out-degree one,
and $G(\pi)$ decomposes uniquely into alternating directed cycles.  A
\emph{$k$-cycle} is a cycle containing $k$ black edges.  We call
it an \emph{odd-length cycle} or an \emph{even-length cycle} according as
$k$ is odd or even.
Let $c_{\mathrm{odd}}(\pi)$ be the number of odd-length cycles in
$G(\pi)$.

For $1\leq i<j<k\leq n+1$, a transposition $\tau(i,j,k)$ exchanges the
adjacent blocks $\pi_i\dots\pi_{j-1}$ and
$\pi_j\dots\pi_{k-1}$.  Its application to $\pi$ is
\[
  \tau(i,j,k)\mathbin{\cdot}\pi
  =[\pi_1\dots\pi_{i-1}\;
    \pi_j\dots\pi_{k-1}\;
    \pi_i\dots\pi_{j-1}\;
    \pi_k\dots\pi_n].
\]

The Bafna--Pevzner lower bound is the following.

\begin{theorem}[Lower bound; Bafna and
  Pevzner~\cite{BafnaPevzner1998}]
  \label{thm:cycle-graph-lower-bound}
  For every $\pi\in S_n$,
  $d_t(\pi)\geq \frac{n+1-c_{\mathrm{odd}}(\pi)}{2}$.
\end{theorem}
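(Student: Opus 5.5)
The statement is the Bafna--Pevzner lower bound. The plan is to transport Lemma~\ref{lem:lb-norm} of Mira and Meidanis to the cycle-graph language; no new analysis of moves is needed.

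\emph{Step 1: reduce to the algebraic bound.} Represent $\pi$ by the anchored cycle $\epi$ on $E_N$ with $N=n+1$, so that $d_t(\pi)=d_t(\epi)$. Lemma~\ref{lem:lb-norm} then gives
\[
d_t(\pi)\geq\frac{N-c^\circ_{\mathrm{odd}}(\spi)}{2}.
\]
It therefore suffices to show that $c^\circ_{\mathrm{odd}}(\spi)=c_{\mathrm{odd}}(\pi)$. Here the left side counts odd-length cycles of $\spi$, including fixed points, and the right side counts odd-length alternating cycles of $G(\pi)$.

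\emph{Step 2: match the cycles.} I would invoke the length-preserving bijection of Silva et al.~\cite[Appendix~2]{silva2022new}, quoted in the remark before Theorem~\ref{thm:bracelet-class-correspondence}. It pairs the cycles of $\spi$, including $1$-cycles, with the alternating cycles of $G(\pi)$, where length means the number of black edges. Such a bijection preserves parity, so the two odd-cycle counts agree.

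If a self-contained check is preferred, I would build the bijection directly. Send each element $x\in E_N$ to the black edge leaving $-x$, where the symbol $0$ is read as $n+1$ on the negative side. Following that black edge and then the gray edge out of the positive vertex corresponds to applying $\epi^{-1}$ and then adding one. This composite is exactly $\spi$, after the identification $\eiota(y)=y+1$ modulo $N$. Hence alternating cycles of $G(\pi)$ traverse orbits of $\spi$, using one black edge per element. A fixed point of $\spi$ corresponds to a $1$-cycle of the graph, that is, an adjacency, consistent with Proposition~\ref{prop:adjacency-fixed-point}.

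\emph{Main obstacle.} The only delicate point is the bookkeeping of the direction and indexing conventions. One must check that the vertex $-(n+1)$ plays the role of the anchor $0$. One must also check that black edges $-\pi_i\to+\pi_{i-1}$ correspond to $\epi^{-1}$ rather than $\epi$, since a mismatch would produce $\epi^{-1}\eiota$ instead of $\spi$. This mismatch is harmless anyway, because $\epi^{-1}\eiota$ is conjugate to $\spi$ by $\epi$ and therefore has the same cycle type. Once the two counts are identified, substituting $N=n+1$ yields the stated inequality.
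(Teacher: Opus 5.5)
Your proof is correct, but it is not the paper's route: the paper gives no proof of this theorem and only cites Bafna and Pevzner. Their original argument stays inside $G(\pi)$. A transposition cuts three black edges and reconnects their endpoints, which changes $c_{\mathrm{odd}}$ by $-2$, $0$, or $2$, and $G(\iota)$ consists of $n+1$ cycles with one black edge each.

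You instead transport Lemma~\ref{lem:lb-norm} through the cycle correspondence, and your explicit check of that correspondence is right. With $-(n+1)$ standing for $0$, the black edge out of $-x$ realizes $\epi^{-1}$ and the gray edge realizes $\eiota$. So the negative vertices of each alternating cycle form one orbit of $\spi$, with one black edge per element.

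Your route shows that the Bafna--Pevzner bound is exactly $\norm{\spi}$. It is therefore a bound on factorizations into arbitrary $3$-cycles, in which applicability plays no role. The price is that the counting step is relocated rather than removed. The lower-bound half of Lemma~\ref{lem:norm} rests on the fact that multiplying by any $3$-cycle changes the number of odd-length cycles by at most two, and you also rely on the identification $d_t(\pi)=d_t(\epi)$. The graph proof, by contrast, is self-contained and yields the per-move classification used by approximation algorithms, which is the graph counterpart of Proposition~\ref{prop:move-types}.

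One correction to your closing remark. If black edges realized $\epi$, the composite would be $\eiota\epi$, which need not have the cycle type of $\spi$: for $\epi=\eiota$ and $n\geq2$ it is $\eiota^2\neq\iota=\spi$. The harmless variants are reading the orbits at the positive vertices, which gives $\epi^{-1}\eiota$, or traversing the cycles backward, which gives $(\spi)^{-1}$. Your direct check already settles the direction, so nothing in the proof depends on that remark.
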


\section{Fixed-content necklace and bracelet recursions}
\label{app:fixed-content-generation}

This appendix briefly reviews the classical recursions that supply the
necklace and ordinary-reflection parts of our construction. Let the alphabet
be totally ordered, and let the prescribed multiplicities sum to $N$. A
nonempty word is a \emph{Lyndon word} when it is strictly smaller than each of
its nonidentity cyclic rotations. For a nonempty word $u$, let
$\operatorname{lyn}(u)$ be the length of its longest prefix that is a Lyndon
word. A \emph{prenecklace} is a prefix of a necklace representative.

The fundamental theorem of necklaces used by Sawada and Karim et
al.~\cite{Sawada2003,Karim2013} gives the recursion in terms of this
Lyndon-prefix length. Suppose that a prenecklace
$u=u_0u_1\cdots u_{j-1}$ has length $j$, and set
$p=\operatorname{lyn}(u)$. Appending a letter $b$ gives another prenecklace
exactly when $b\geq u_{j-p}$. Equality gives
$\operatorname{lyn}(ub)=p$, whereas a strict inequality gives
$\operatorname{lyn}(ub)=j+1$. The fixed-content version considers only
letters whose remaining multiplicity is positive. At length $N$, the
prenecklace is a necklace representative exactly when $p$ divides $N$. The
optimized recursion avoids scanning letters whose multiplicities are
exhausted and skips a forced final run of the largest letter. These changes
reduce the traversal without changing the generated representatives.

The Lyndon-prefix values in Sawada's trace of $0010023003$ illustrate this
prenecklace
recursion~\cite[pp.~263--264]{Sawada2001}. The successive nonempty prefixes are
\[
  0,\ 00,\ 001,\ 0010,\ 00100,\ 001002,\ 0010023,\
  00100230,\ 001002300,\ 0010023003,
\]
and their Lyndon-prefix lengths are
$1,1,3,3,3,6,7,7,7,10$. After $00100$, for example, $p=3$, so the next
letter must be at least the letter $1$ three positions earlier. Choosing $2$
changes the Lyndon-prefix length to $6$. Before the final extension, this
length is $7$, so the lower bound is $1$; choosing $3$ changes the length to
$10$, which divides the word length.

Karim et al. combine Sawada's ordinary-reflection test with this fixed-content
prenecklace tree~\cite{Sawada2001,Karim2013}. For each prenecklace prefix
$u_0u_1\cdots u_{j-1}$, compare it with its reversal
$u_{j-1}u_{j-2}\cdots u_0$. If the prefix is lexicographically larger, no
extension can be a bracelet representative, so the branch is rejected. If the
two words are equal, set $q=j$; otherwise leave $q$ unchanged. Thus, for a
completed necklace representative $u=u_0u_1\cdots u_{N-1}$, the parameter $q$
is the length of its longest palindromic prefix. The word $u$ is a bracelet
representative exactly when no prefix comparison caused rejection and
\[
  u_qu_{q+1}\cdots u_{N-1}
  \leq
  u_{N-1}u_{N-2}\cdots u_q.
\]
Karim et al. accelerate the prefix comparisons using run-length encoding and
maintain the final suffix comparison incrementally, while the remaining
multiplicities restrict the recursion to the prescribed content.

\begingroup
\sloppy
\raggedright
\bibliographystyle{siamplain}
\bibliography{manuscript}
\endgroup
\end{document}